\numberwithin{equation}{section}
\theoremstyle{plain}
\newtheorem{theorem}{Theorem}[section]
\newtheorem{lemma}[theorem]{Lemma}
\newtheorem{corollary}[theorem]{Corollary}
\newtheorem{proposition}[theorem]{Proposition}
\theoremstyle{definition}
\newtheorem{remark}[theorem]{Remark}
\newtheorem{example}[theorem]{Example}
\begin{document}

\title[Example of quasianalytic contraction]{Example of quasianalytic contraction whose spectrum is a proper subarc of the unit circle}

\author{Maria F. Gamal'}
\address{
 St. Petersburg Branch\\ V. A. Steklov Institute 
of Mathematics\\
 Russian Academy of Sciences\\ Fontanka 27, St. Petersburg\\ 
191023, Russia  
}
\email{gamal@pdmi.ras.ru}

%\thanks{The second author has been supported by a grant of the

\subjclass[2010]{Primary 47A10, 47A15; Secondary  47A60}

\keywords{Quasianalytic  contraction,  spectrum, unitary asymptote, hyperinvariant subspace} 

%\date\today
\begin{abstract}A partial answer on {\cite[Question 2]{ks15}} is given. Namely, an operator $R$ similar to a quasianalytic contraction 
whose quasianalytic spectral set is equal to its spectrum and  is a proper subarc of the unit circle is constructed, 
but no estimates of $\|R^{-1}\|$ is given.
\end{abstract}

\maketitle

\section{Introduction}

Let $\mathcal H$ be a (complex, separable) Hilbert space, and let 
%$\mathcal L(\mathcal H)$ be the algebra of all (linear, bounded) operators acting on $\mathcal H$. 
 $T$ be a   (linear, bounded) operator on $\mathcal H$.
The lattice of all (closed) subspaces $\mathcal E$ of $\mathcal H$ such that $T\mathcal E\subset\mathcal E$ is called 
the \emph{invariant subspace lattice} of $T$ and is denoted by $\operatorname{Lat}T$. 
  The \emph{commutant} $\{T\}'$ is the set of all operators $A$ on  $\mathcal H$ such that $AT=TA$. Recall 
that $\{T\}'$ is an algebra closed in the weak operator topology. 
The lattice of all  subspaces $\mathcal E$ of $\mathcal H$ such that $A\mathcal E\subset\mathcal E$ 
for every $A\in\{T\}'$ is called 
the \emph{hyperinvariant subspace lattice} of $T$ and is denoted by $\operatorname{Hlat}T$; the subspaces $\mathcal E$ are 
called \emph{hyperinvariant subspaces} of $T$. 
%Clearly, $\operatorname{Hlat}T\subset\operatorname{Lat}A$ for every $A\in\{T\}'$. 

Let $\mathcal H$  and $\mathcal K$ be two Hilbert space. Denote by $\mathcal L(\mathcal H, \mathcal K)$ 
the space of all  (linear, bounded) transformations acting from $\mathcal H$ to  $\mathcal K$. 
Set $\mathcal L(\mathcal H)=\mathcal L(\mathcal H, \mathcal H)$, then $\mathcal L(\mathcal H)$ is 
 the algebra of all (linear, bounded) operators acting on $\mathcal H$. Let $T\in\mathcal L(\mathcal H)$,
$R\in\mathcal L(\mathcal K)$, $X\in\mathcal L(\mathcal H, \mathcal K)$ be such that $XT=RX$. If $X$ is \emph{invertible}, that is, 
$X^{-1}\in\mathcal L(\mathcal K, \mathcal H)$, then $T$ and $R$ are called \emph{similar}. If, in addition, 
$X$ is a unitary transformation, then $T$ and $R$ are called \emph{unitarily equivalent}.

An operator  $T\in\mathcal L(\mathcal H)$ is called \emph{power bounded}, if $\sup_{n\in\mathbb N}\|T^n\|<\infty$.
 A power bounded operator $T$ is 
\emph{of class} $C_{1\cdot}$, if $\inf_{n\in\mathbb N}\|T^n x\|>0$ for every $0\neq x\in\mathcal H$, 
and is \emph{of class} $C_{0\cdot}$, if $\lim_n\|T^n x\|=0$ for every $ x\in\mathcal H$. 
$T$ is \emph{of class} $C_{\cdot a}$, if $T^*$ is of class $C_{a\cdot}$, and $T$ is \emph{of class} $C_{ab}$, 
if $T$ is of classes $C_{a\cdot}$ and $C_{\cdot b}$, $a,b=0,1$.

An operator   $T\in\mathcal L(\mathcal H)$ is called \emph{polynomially bounded}, if there exists a constant $M$
such that 
\begin{align*} \|p(T)\|\leq M\max\{|p(z)|\ :\ z\in\operatorname{clos}\mathbb D\}\\
\text{ for every (analytic) polynomial } p,\end{align*} where $\mathbb D$ is the open unit disc.
For a natural number $n$ a $n\times n$ matrix can be regarded as an operator on 
$\ell_n^2$, its norm is denoted by the symbol $\|\cdot\|_{\mathcal L(\ell_n^2)}$. 
For a family of polynomials $[p_{ij}]_{i,j=1}^n$ put
$$\| [p_{ij}]_{i,j=1}^n\|_{H^\infty(\ell_n^2)}= 
\sup\{\| [p_{ij}(z)]_{i,j=1}^n\|_{\mathcal L(\ell_n^2)}, \ z\in\operatorname{clos}\mathbb D\}.$$
For  $T\in\mathcal L(\mathcal H)$  and   a  family   of  polynomials   
$[p_{ij}]_{i,j=1}^n$ \
%the operator  $[p_{ij}(T)]_{i,j=1}^n$ acting on the space $\oplus_{j=1}^n \mathcal H$ 
the operator  $$[p_{ij}(T)]_{i,j=1}^n\in\mathcal L(\oplus_{j=1}^n \mathcal H)$$ is defined.
$T$ is called \emph{completely polynomially bounded}, 
if there exists a constant $M$ such that
\begin{equation} \label{1.2}
\begin{aligned} \| [p_{ij}(T)]_{i,j=1}^n\|\leq
M\| [p_{ij}]_{i,j=1}^n\|_{H^\infty(\ell_n^2)} \\ 
\text{ for every family of polynomials } [p_{ij}]_{i,j=1}^n. \end{aligned}
\end{equation}
%The smallest constant $M$ which satisfies to \eqref{1.2} is denoted by $M_{cpb}(T)$ and is 
%called the {\it complete polynomial bound} of $T$. If $T$ is not complete 
%polynomially bounded, then $M_{cpb}(T) = \infty$.

An operator $T$ is called a {\it contraction} if $\|T\|\leq 1$.
The following criterion for an operator to be similar to a contraction is proved 
in \cite{pau}:

\emph{An operator $T$ is similar to a contraction if and only if 
$T$ is completely polynomially bounded. }
%$M_{cpb}(T)<\infty$,and} $$M_{cpb}(T) = \inf \{\|X\|\|X^{-1}\|: \|XTX^{-1}\| \leq 1\},$$ 
%where operators $X$ are invertible.

We recall some definitions and  results on  unitary asymptotes and quasianalytic operators. 
For references on unitary asymptotes  see {\cite[Ch. IX.1]{nfbk}}, \cite{ker1}, \cite{ker4}, \cite{kerntuple}, 
for quasianalytic operators see also \cite{est},  \cite{ks14}, \cite{ks15}, \cite{gamstudia}, \cite{gam19}; 
see also references therein.
 
A pair $(X,U)$, where $U$ is a unitary operator, and $X$ is a 
%(linear, bounded)
 transformation such that $XT=UX$, 
 is called a \emph{unitary asymptote} of an operator $T$, if for any other pair $(Y,V)$, 
where $V$ is a unitary operator, and $Y$ is a  transformation such that $YT=VY$, there exists 
a unique transformation $Z$ such that $ZU=VZ$ and $Y=ZX$. Two pairs $(X,U)$ and $(X_1,U_1)$,
 where $U$ and $U_1$ are unitary operators, and $X$ and $X_1$ are  transformations such that $XT=UX$ and $X_1T=U_1X_1$, 
are  \emph{equivalent}, if there exists an \emph{invertible} transformation $Z$
% (that is, the transformation $Z^{-1}$ is defined and bounded) 
such that $ZU=U_1Z$ and $X_1=ZX$. It follows from the definition  that 
 a unitary asymptote of $T$ is defined up to equivalence. 
If two operators are similar and one of them has a unitary asymptote, then other has  a unitary asymptote, too, 
and their unitary asymptotes are equivalent. 
If $ZU=U_1Z$ for an invertible transformation $Z$ 
and unitary operators $U$ and $U_1$, then $U$ and $U_1$ are unitarily equivalent 
({\cite[Proposition II.3.4]{nfbk}},  {\cite[Proposition II.10.6]{conw2}},  {\cite[Proposition 1.5]{rara}}). 

Let $T\in\mathcal L(\mathcal H)$ have a unitary asymptote $(X,U)$, where $U\in\mathcal L(\mathcal K)$. 
%Let an operator $T$ acting on a Hilbert space $\mathcal H$ have a unitary asymptote $(X,U)$, 
%where $U$ acts on a Hilbert space $\mathcal K$. 
Then  there exists the mapping
$$\gamma_T\colon \{T\}'\to\{U\}', \ \ \gamma_T(A)=D, $$
 where $ D\in\{U\}'$ is  a unique operator  such that $XA=DX$, and $\gamma_T$ is a unital algebra-homomorphism.
Furthermore, 
\begin{equation}\label{sigmagamma}
\sigma(\gamma_T(A))\subset\sigma(A) \ \ \text{ for every }A\in\{T\}'. \end{equation} 
For $ \mathcal E\subset \mathcal K$ set 
$$X^{-1}\mathcal E =\{x\in\mathcal H\ :\ Xx\in\mathcal E \}.$$
Then $X^{-1}\mathcal E\in\operatorname{Hlat}T$ for every $\mathcal E\in\operatorname{Hlat}U$. 
\emph{We will assume that $\mathcal K\neq\{0\}$.} 
It is well khown that  $\operatorname{Hlat}U\neq\{\{0\},\mathcal K\}$, if $U$ is not the multiplication by a unimodular constant on 
 $\mathcal K$. But it is possible that 
\begin{equation}\label{defquasi} X^{-1}\mathcal E =\{0\} 
\ \text{ for every } \mathcal E\in\operatorname{Hlat}U \text{ such that }  \mathcal E\neq\mathcal K.
\end{equation}
Such an operator $T$ is called \emph{quasianalytic}. 

Denote by $\mathbf{m}$ the normalized linear measure on the unit circle $\mathbb T$. 
 For a measurable (with respect to $\mathbf{m}$) set $\sigma\subset\mathbb T$ denote by $U_\sigma$ the operator of multiplication by 
the independent variable on $L^2(\sigma):=L^2(\sigma, \mathbf{m})$. It is well known that $U_\sigma$ is cyclic, 
$$ \{U_\sigma\}'=\{\eta(U_\sigma)\ : \ \eta\in L^\infty(\sigma, \mathbf{m})\},$$
where $\eta(U_\sigma)$ is the operator of multiplication by $\eta$, and 
%\begin{equation}\label{hlatuu}
$$\operatorname{Hlat}U_\sigma = \{L^2(\tau)\ : \ \tau\subset\sigma\},$$
%\end{equation}
(where $\tau$ are  measurable with respect to $\mathbf{m}$). 
%Furthermore, if $\mathbf{m}(\sigma)<1$, then $\operatorname{Hlat}U_\sigma=\operatorname{Lat}U_\sigma$. 
%If $\mathcal E\in \operatorname{Lat}U_{\mathbb T}$ and $\mathcal E\not\in \operatorname{Hlat}U_{\mathbb T}$, 
%then $ \mathcal E = \eta H^2(\mathbb D)$ for some inner function $\eta\in H^\infty(\mathbb D)$.  

Let  $\sigma\subset\mathbb T$  be a measurable (with respect to $\mathbf{m}$) set, 
and let an operator $T$  have a unitary asymptote $(X,U_\sigma)$.  
Then for every $A\in\{T\}'$ there exists a function $\eta\in L^\infty(\sigma, \mathbf{m})=:L^\infty(\sigma)$ such that $\gamma_T(A)=\eta(U_\sigma)$. 
The mapping
\begin{equation}\label{gamma}
\widehat\gamma_T\colon \{T\}'\to L^\infty(\sigma), \ \ \widehat\gamma_T(A)=\eta, \end{equation}
is a unital algebra-homomorphism, and $\widehat\gamma_T$ does not depend of the choice of $X$. 
Furthermore, $\widehat\gamma_T(T)=\chi$, where $\chi(z)=z$, $z\in\mathbb T$. 
The range $\widehat\gamma_T(\{T\}')$ is called the \emph{functional commutant} of $T$, see \cite{ks14} and references therein. 

Every power bounded operator $T$ has a unitary asymptote, and if $T$ of class $C_{1\cdot}$, then $\gamma_T$ is injective. 
If, in addition,  a unitary operator from the unitary asymptote (\emph{which also will be called the unitary asymptote}) of $T$ is $U_\sigma$ for some $\sigma\subset\mathbb T$, 
then $\widehat\gamma_T$ is injective, too. Therefore, $\{T\}'$ is an abelian algebra, 
because $L^\infty(\sigma)$ is an abelian algebra. Moreover, if $R\in\{T\}'$ and $\{R\}'$ is abelian, 
then $\{T\}'=\{R\}'$ (see {\cite[Proposition 11]{ks14}}, quasianalyticity is not used in the proof here). 
 
Let $T$ be a polynomially bounded operator. Then, clearly, $T$ is power bounded, therefore, $T$ has a unitary asymptote. 
If, in addition,  $T$ is  of class $C_{\cdot 0}$,  then the spectral measure of the unitary asymptote of $T$ is absolutely continuous with respect to $\mathbf{m}$ 
{\cite[Theorem 13 and Proposition 15]{ker4}}. 
If $T$ is quasianalytic, then $T$ is of class $C_{10}$ {\cite[Proposition 33]{ker4}}. 
For definition of the \emph{quasianalytic spectral set} of $T$ we refer to 
\cite{ker4}, \cite{kerntuple} and \cite{ks14}. We recall only that for quasianalytic polynomially bounded operator $T$ 
the quasianalytic spectral set coincides with the measurable  (with respect to $\mathbf{m}$) set on which the spectral measure of  the unitary asymptote  of $T$ is concentrated. 

Let $T$ be a polynomially bounded quasianalytic operator. Then the spectrum $\sigma(T)$ of $T$ is a connected set, and 
$\mathbf{m}(\sigma(T)\cap\mathbb T)>0$. Therefore, if $\sigma(T)\subset\mathbb T$, 
then $\sigma(T)$ is a subarc of $\mathbb T$. Examples of quasianalytic contractions $T$ such that 
$\sigma(T)=\mathbb T$ or $\sigma(T)\cap\mathbb T\neq\mathbb T$ are known. But in all known (to the author) examples
the interior of the polynomially convex hull of $\sigma(T)$ is non-empty. (Recall that 
 the polynomially convex hull of a compact set  $\sigma\subset\mathbb C$ is the union of $\sigma$ and all  the bounded components of 
 $\mathbb C\setminus\sigma$; for example,  the polynomially convex hull of $\mathbb T$ is $\operatorname{clos}\mathbb D$.)
In this paper, a  quasianalytic operator $R$ similar  to a contraction is constructed such that 
$\sigma(R)=\{\mathrm{e}^{\mathrm{i}t}\ :\ t\in[0,\pi]\}$ and the unitary asymptote of $R$ is $U_{\sigma(R)}$.
Therefore, the quasianalytic spectral set of $R$ is $\sigma(R)$. 
For this purpose,  an appropriate quasianalytic operator $T$ with $\sigma(T)=\mathbb T$  is constructed, 
and it is proved that there exists $R\in\{T\}'$ such that $R^2=T$. The existence of non-trivial hyperinvariant subspaces 
of $T$ and $R$ is based on result from \cite{est}. 

The following notation will be used. By $H^p$ the Hardy space is denoted (on a some domain of $\mathbb C$, which will be mentioned). 
By $I_{\mathcal H}$ and $P_{\mathcal E}$ the identity operator on a Hilbert space $\mathcal H$ 
and the orthogonal projection on the subspace $\mathcal E$ are denoted. For two positive functions $w(t)$ and $\phi(t)$, the notation 
$w\asymp \phi$ means that $0<\inf_t w(t)/\phi(t)\leq \sup_t w(t)/\phi(t)<\infty$. By $\mathbf{1}$ and $\chi$ the unit constant function and the identity function are denoted:
 $\mathbf{1}(z)=1$ and $\chi(z)=z$, $z\in\operatorname{clos}\mathbb D$. 

\section{Square root of operator}

\begin{theorem}\label{thmsquare}Let $T$ and $R$ be two operators  on a Hilbert space 
 such that $R^2=T$. Then $R$ is power bounded if and only if $T$ is power bounded, and then
$R$ is of class $C_{1\cdot}$ if and only if $T$ is of class $C_{1\cdot}$, 
and $R$ is of class $C_{\cdot 0}$ if and only if $T$ is of class $C_{\cdot 0}$. 
Furthermore, $R$ is  polynomially bounded if and only if $T$  is polynomially bounded, 
and $R$ is similar to a contraction if and only if $T$  is similar to a contraction.
\end{theorem}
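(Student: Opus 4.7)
The plan is to treat each of the five equivalences separately, using two elementary ideas: the identity $R^{2n+1}=R\cdot T^{n}$ together with $R^{2n}=T^{n}$, and the even/odd decomposition of a polynomial.

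For power boundedness, one direction is trivial ($\|T^{n}\|=\|R^{2n}\|$), and the other uses $\|R^{2n+1}\|\leq\|R\|\,\|T^{n}\|$, giving $\sup_{n}\|R^{n}\|\leq\max(1,\|R\|)\sup_{n}\|T^{n}\|$. For $C_{1\cdot}$, if $R\in C_{1\cdot}$ then $\inf_n\|T^{n}x\|=\inf_n\|R^{2n}x\|>0$ for $x\neq0$; conversely, if $T\in C_{1\cdot}$, then $\|R^{2n}x\|=\|T^{n}x\|$ is bounded below, and $\|R^{2n+1}x\|\geq\|R^{2n+2}x\|/\|R\|=\|T^{n+1}x\|/\|R\|$ handles the odd powers. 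Class $C_{\cdot0}$ passes to adjoints with $T^{*}=(R^{*})^{2}$ and the same idea: $(R^{*})^{2n}x=(T^{*})^{n}x\to0$, and $\|(R^{*})^{2n+1}x\|\leq\|R^{*}\|\,\|(T^{*})^{n}x\|\to0$.

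For polynomial boundedness, the key step is to write an arbitrary polynomial $p$ as $p(z)=a(z^{2})+zb(z^{2})$, where $a(z^{2})=\tfrac12(p(z)+p(-z))$ and $zb(z^{2})=\tfrac12(p(z)-p(-z))$. Then $p(R)=a(T)+R\,b(T)$, so $\|p(R)\|\leq M\bigl(\|a\|_{\infty}+\|R\|\,\|b\|_{\infty}\bigr)$, where $M$ is the polynomial bound of $T$ and $\|\cdot\|_{\infty}$ is the sup-norm on $\operatorname{clos}\mathbb{D}$. The bound $\|a\|_{\infty}\leq\|p\|_{\infty}$ is immediate. For $b$, the inequality $|z|\,|b(z^{2})|\leq\|p\|_{\infty}$ on $|z|\leq1$ gives $|b(w)|\leq\|p\|_{\infty}$ only on $|w|=1$, and the maximum principle extends this to $|w|\leq1$. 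This yields $\|p(R)\|\leq M(1+\|R\|)\|p\|_{\infty}$. The converse is easier: if $R$ is polynomially bounded then $p(T)=p(R^{2})=\widetilde{p}(R)$ with $\widetilde p(z)=p(z^{2})$, and $\|\widetilde p\|_{\infty}=\|p\|_{\infty}$.

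The similar-to-contraction equivalence follows via Paulsen's criterion recalled in the introduction: it suffices to check complete polynomial boundedness. The same even/odd decomposition applied entrywise to $[p_{ij}]$ gives $[p_{ij}(R)]=[a_{ij}(T)]+\operatorname{diag}(R,\ldots,R)\cdot[b_{ij}(T)]$, and the operator-valued maximum modulus principle replaces the scalar one to bound $\sup_{|w|\leq1}\|[b_{ij}(w)]\|_{\mathcal{L}(\ell^{2}_{n})}$ by $\|[p_{ij}]\|_{H^{\infty}(\ell^{2}_{n})}$, so that $\|[p_{ij}(R)]\|\leq M(1+\|R\|)\|[p_{ij}]\|_{H^{\infty}(\ell^{2}_{n})}$. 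The converse uses $p_{ij}(T)=\widetilde{p}_{ij}(R)$ as before. The main obstacle throughout is the control of the odd part $b$, where the naive expression $b(w)=(p(\sqrt w)-p(-\sqrt w))/(2\sqrt w)$ looks singular at the origin; the maximum principle is precisely what converts the boundary bound $|z|=1$ into a full disk bound.
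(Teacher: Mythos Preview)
Your proof is correct and follows essentially the same route as the paper's: the even/odd split $p(z)=p_{0}(z^{2})+zp_{1}(z^{2})$, the identity $p(R)=p_{0}(T)+R\,p_{1}(T)$ (and its matrix version), and Paulsen's criterion for the similarity statement, with the easy directions and the $C_{1\cdot}$, $C_{\cdot0}$ claims handled exactly as you do. The only cosmetic difference is in bounding the odd part: the paper argues via the surjectivity of $z\mapsto z^{2}$ on $\operatorname{clos}\mathbb D$, whereas you invoke the (scalar, then operator-valued) maximum principle to pass from $|z|\,\|b(z^{2})\|\leq\|p\|_{\infty}$ on $|z|=1$ to $\|b\|_{\infty}\leq\|p\|_{\infty}$---both yield the same constant $M(1+\|R\|)$.
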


\begin{proof} The proofs of statements concerning power boundedness are  very simple, therefore, they are omitted. 
The proof of ``only if" part of statements concerning polynomial boundedness and similarity of contractions  are  very simple, too. 
Suppose that  $T$  is similar to a contraction. Then $T$ is  completely polynomially bounded \cite{pau}. We will to prove that $R$ 
is  completely polynomially bounded. Then it will be proved that $R$ is similar to a contraction \cite{pau}. 

Let $p$ be a polynomial. Then $p(z)=\sum_{n=0}^Nc_nz^n$, $z\in\mathbb C$, for some $N\in\mathbb N$. 
For convenience, set $c_n=0$ for $n\in\mathbb N$, $n\geq N+1$. Set 
$$ p_0(z)=\sum_{n\geq 0}c_{2n}z^n \ \  \text{ and }  \ \ p_1(z)=\sum_{n\geq 0}c_{2n+1}z^n, \ \ \  z\in\mathbb C.$$

Clearly, 
$$ p_0(z^2)=\frac{p(z)+p(-z)}{2}  \ \  \text{ and } \  \ zp_1(z^2)=\frac{p(z)-p(-z)}{2}, \ \ \  z\in\mathbb C.$$
For a family of polynomials $[p_{ij}]_{i,j=1}^n$ we have 
\begin{align*}[(p_{ij})_0(z^2)]_{i,j=1}^n &=\frac{1}{2}\Bigl([p_{ij}(z)]_{i,j=1}^n+[p_{ij}(-z)]_{i,j=1}^n\Bigr) 
\ \  \text{ and }  \\  
z[(p_{ij})_1(z^2)]_{i,j=1}^n&=\frac{1}{2}\Bigl([p_{ij}(z)]_{i,j=1}^n-[p_{ij}(-z)]_{i,j=1}^n\Bigr),\ \ \  z\in\mathbb C.\end{align*}
Therefore, 
\begin{align*}\|[(p_{ij})_0(z^2)]_{i,j=1}^n\|_{\mathcal L(\ell_n^2)}&\leq
\frac{1}{2}\Bigl(\|[p_{ij}(z)]_{i,j=1}^n\|_{\mathcal L(\ell_n^2)}+\|[p_{ij}(-z)]_{i,j=1}^n\|_{\mathcal L(\ell_n^2)}\Bigr) \\
&\leq \|[p_{ij}]_{i,j=1}^n\|_{H^\infty(\ell_n^2)} \ \  \text{ and }  \\  
|z|\|[(p_{ij})_1(z^2)]_{i,j=1}^n\|_{\mathcal L(\ell_n^2)}&\leq
\frac{1}{2}\Bigl(\|[p_{ij}(z)]_{i,j=1}^n\|_{\mathcal L(\ell_n^2)}+\|[p_{ij}(-z)]_{i,j=1}^n\|_{\mathcal L(\ell_n^2)}\Bigr)\\
&\leq \|[p_{ij}]_{i,j=1}^n\|_{H^\infty(\ell_n^2)},\ \ \  z\in\mathbb C.
\end{align*}
Clearly, for every $\zeta\in\operatorname{clos}\mathbb D$ there exists $z\in\operatorname{clos}\mathbb D$ 
such that $z^2=\zeta$. Therefore, 
\begin{align*}\|[(p_{ij})_0]_{i,j=1}^n\|_{H^\infty(\ell_n^2)} &\leq  \|[p_{ij}]_{i,j=1}^n\|_{H^\infty(\ell_n^2)} 
 \ \  \text{ and }  \\ 
\|[(p_{ij})_1]_{i,j=1}^n\|_{H^\infty(\ell_n^2)} &\leq  \|[p_{ij}]_{i,j=1}^n\|_{H^\infty(\ell_n^2)} .\end{align*}

We have $$[p_{ij}(R)]_{i,j=1}^n= [(p_{ij})_0(T)]_{i,j=1}^n+\Bigl(\oplus_{j=1}^n R\Bigr)\cdot[(p_{ij})_1(T)]_{i,j=1}^n.$$
Since $T$ is  completely polynomially bounded, \eqref{1.2} is fulfilled for $T$ with some constant  $M$. Therefore, 
\begin{align*}\|[p_{ij}(R)]_{i,j=1}^n\|&\leq \|[(p_{ij})_0(T)]_{i,j=1}^n\|+\|R\|\|[(p_{ij})_1(T)]_{i,j=1}^n\|
\\&\leq M\|[(p_{ij})_0]_{i,j=1}^n\|_{H^\infty(\ell_n^2)}+ \|R\| M\|[(p_{ij})_1]_{i,j=1}^n\|_{H^\infty(\ell_n^2)}
\\&=(1+ \|R\|) M\|[p_{ij}]_{i,j=1}^n\|_{H^\infty(\ell_n^2)}.\end{align*}
Thus, $R$ is  completely polynomially bounded.

If we suppose only that $T$ is polynomially bounded, then the proof of polynomial boundedness of $R$ is similar.
 \end{proof}

\begin{lemma}\label{lemsquare} Set $\varrho(\mathrm{e}^{\mathrm{i}t})=\mathrm{e}^{\mathrm{i}\frac{t}{2}}$, $t\in(0,2\pi)$. 
Suppose that $T$, $R\in\mathcal L(\mathcal H)$ are  such that 
$T$ is a polynomially bounded operator of class $C_{\cdot 0}$, $(X, U_{\mathbb T})$ is a unitary asymptote of $T$, 
$R^2=T$ and $XR=\varrho(U_{\mathbb T})X$. Then $(X, \varrho(U_{\mathbb T}))$ is a unitary asymptote of $R$.
\end{lemma}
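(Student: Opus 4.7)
The plan is to verify directly that the pair $(X, \varrho(U_{\mathbb T}))$ satisfies the universal property defining a unitary asymptote of $R$. First, one checks that $\varrho(U_{\mathbb T})$ is well-defined via the $L^\infty$-functional calculus on the unit circle (since $\varrho\in L^\infty(\mathbb T)$ with $|\varrho|=1$), is unitary, and satisfies $\varrho(U_{\mathbb T})^2=\chi(U_{\mathbb T})=U_{\mathbb T}$. Combined with the given intertwining $XR=\varrho(U_{\mathbb T})X$, this makes $(X,\varrho(U_{\mathbb T}))$ a legitimate candidate pair for $R$.

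For the universal property, let $(Y,V)$ be any pair with $V$ unitary and $YR=VY$. Squaring gives $YT=YR^2=V^2Y$, so $(Y,V^2)$ intertwines $T$ with the unitary $V^2$. Applying the universal property of $(X,U_{\mathbb T})$ produces a unique transformation $Z$ with $ZU_{\mathbb T}=V^2Z$ and $Y=ZX$; the heart of the proof is to show that this same $Z$ already satisfies $Z\varrho(U_{\mathbb T})=VZ$.

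To this end I would set $W:=VZ-Z\varrho(U_{\mathbb T})$ and compute: using $ZU_{\mathbb T}=V^2Z$ together with the commutation $U_{\mathbb T}\varrho(U_{\mathbb T})=\varrho(U_{\mathbb T})U_{\mathbb T}$, one gets $WU_{\mathbb T}=V^2W$; and using $Y=ZX$, $VY=YR$, and $XR=\varrho(U_{\mathbb T})X$, one gets $WX=VZX-Z\varrho(U_{\mathbb T})X=VY-ZXR=VY-YR=0$. Both $0$ and $W$ therefore intertwine $U_{\mathbb T}$ with $V^2$ and yield the zero transformation upon composition with $X$; so the uniqueness clause in the universal property of $(X,U_{\mathbb T})$, applied to the pair $(0,V^2)$, forces $W=0$. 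Uniqueness of $Z$ with respect to $(X,\varrho(U_{\mathbb T}))$ is then automatic, since any two candidates $Z_1,Z_2$ satisfy $Z_iU_{\mathbb T}=V^2Z_i$ (by squaring their intertwining with $\varrho(U_{\mathbb T})$) and $Y=Z_iX$, hence $Z_1=Z_2$ by uniqueness for $(X,U_{\mathbb T})$.

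I anticipate no genuine obstacle: the hypothesis $XR=\varrho(U_{\mathbb T})X$ has been arranged precisely so that the universal property for $T$ transfers to $R$ via the relation $T=R^2$, and the only substantive computation is the identity $WU_{\mathbb T}=V^2W$ with $WX=0$. The polynomial boundedness and $C_{\cdot 0}$ assumptions on $T$ are not actually used in this particular verification; they serve to make the surrounding context (existence and shape of the unitary asymptote of $T$) consistent with the remainder of the paper.
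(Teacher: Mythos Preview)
Your argument is correct and is genuinely different from the paper's. The paper first invokes Theorem~\ref{thmsquare} to see that $R$ is polynomially bounded of class $C_{\cdot 0}$, hence possesses a unitary asymptote $(Y,V)$; then it cites K\'erchy's spectral-mapping result to deduce that $(Y,V^{2})$ is a unitary asymptote of $T=R^{2}$, obtains an invertible $Z_{1}$ with $Y=Z_{1}X$ and $Z_{1}U_{\mathbb T}=V^{2}Z_{1}$, and finally uses the universal property of $(Y,V)$ together with the minimality relation $\bigvee_{n\ge 0}U_{\mathbb T}^{-n}X\mathcal H=L^{2}(\mathbb T)$ to show $Z_{1}$ already intertwines $\varrho(U_{\mathbb T})$ with $V$. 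In contrast, you verify the universal property of $(X,\varrho(U_{\mathbb T}))$ directly: given an arbitrary pair $(Y,V)$, the $Z$ coming from the universal property of $(X,U_{\mathbb T})$ for $(Y,V^{2})$ is shown, via the neat computation $W=VZ-Z\varrho(U_{\mathbb T})$, $WU_{\mathbb T}=V^{2}W$, $WX=0$, to already satisfy $Z\varrho(U_{\mathbb T})=VZ$. Your route is more elementary and self-contained---it avoids both Theorem~\ref{thmsquare} and the cited result of K\'erchy, and, as you observe, does not use the polynomial boundedness or $C_{\cdot 0}$ hypotheses at all. The paper's route, on the other hand, ties the lemma into the existing machinery (existence of asymptotes for $C_{\cdot 0}$ polynomially bounded operators, and the behaviour of asymptotes under powers), which is natural given the surrounding development.
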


\begin{proof} By Theorem \ref{thmsquare}, $R$ is a polynomially bounded operator of class $C_{\cdot 0}$. Therefore, 
$R$ has a unitary asymptote $(Y,V)$, and the spectral measure of $V$ is absolutely continuous with respect to $\mathbf{m}$ 
{\cite[Theorem 13 and Proposition 15]{ker4}}. 
By {\cite[Theorem 6]{ker15}}, $(Y,V^2)$ is a unitary asymptote of $T$. 
Therefore, there exists an invertible transformation $Z_1$ such that $Y=Z_1X$ and $Z_1U_{\mathbb T}=V^2Z_1$. 

Since $\varrho(U_{\mathbb T})$ is unitary, there exists a  transformation $Z_2$ 
such that $X=Z_2Y$ and $Z_2V=\varrho(U_{\mathbb T})Z_2$. Therefore, $X=Z_2Z_1X$ and $Z_2Z_1U_{\mathbb T}=U_{\mathbb T}Z_2Z_1$. 
It follows from the definition of a unitary asymptote that 
$$\bigvee_{n\geq 0}U_{\mathbb T}^{-n}X\mathcal H = L^2(\mathbb T).$$
Therefore, $Z_2Z_1=I_{L^2(\mathbb T)}$. Thus, $Z_2=Z_1^{-1}$. 
It is proved that the pairs $(X, \varrho(U_{\mathbb T}))$  and $(Y,V)$ are equivalent. 
\end{proof}

\begin{corollary}\label{corsquare} In assumption of Lemma \ref{lemsquare}, $T$ is quasianalytic if and only if $R$ is quasianalytic.
\end{corollary}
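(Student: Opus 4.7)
The plan is to combine Lemma \ref{lemsquare} with the observation that $U_{\mathbb{T}}$ and $\varrho(U_{\mathbb{T}})$ generate the same commutant, hence the same hyperinvariant subspace lattice, so that the quasianalyticity condition for $T$ (with respect to $(X,U_{\mathbb{T}})$) and for $R$ (with respect to $(X,\varrho(U_{\mathbb{T}}))$) are literally the same condition.

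First, I would invoke Lemma \ref{lemsquare} to identify $(X,\varrho(U_{\mathbb{T}}))$ as a unitary asymptote of $R$. The key point is that this asymptote uses \emph{the same} transformation $X$ and \emph{the same} underlying Hilbert space $L^2(\mathbb{T})$ as the given unitary asymptote $(X,U_{\mathbb{T}})$ of $T$. Unwinding the definition \eqref{defquasi}, this means that $T$ is quasianalytic iff $X^{-1}\mathcal{E}=\{0\}$ for every $\mathcal{E}\in\operatorname{Hlat}U_{\mathbb{T}}$ with $\mathcal{E}\neq L^2(\mathbb{T})$, and $R$ is quasianalytic iff the same holds for every $\mathcal{E}\in\operatorname{Hlat}\varrho(U_{\mathbb{T}})$ with $\mathcal{E}\neq L^2(\mathbb{T})$. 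It therefore suffices to prove
\[
\operatorname{Hlat}U_{\mathbb{T}}=\operatorname{Hlat}\varrho(U_{\mathbb{T}}).
\]

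This in turn reduces to $\{U_{\mathbb{T}}\}'=\{\varrho(U_{\mathbb{T}})\}'$, which I would verify by a short two-line argument. Since $\varrho(U_{\mathbb{T}})$ is a Borel function of $U_{\mathbb{T}}$, every operator commuting with $U_{\mathbb{T}}$ also commutes with $\varrho(U_{\mathbb{T}})$, giving $\{U_{\mathbb{T}}\}'\subset\{\varrho(U_{\mathbb{T}})\}'$. Conversely, from $\varrho(\mathrm{e}^{\mathrm{i}t})^2=\mathrm{e}^{\mathrm{i}t}$ for $t\in(0,2\pi)$ one has $\varrho(U_{\mathbb{T}})^2=U_{\mathbb{T}}$, so any operator commuting with $\varrho(U_{\mathbb{T}})$ commutes with its square, and therefore with $U_{\mathbb{T}}$; this gives the reverse inclusion.

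No real obstacle is expected: the entire content is the remark that a unitary asymptote produced by Lemma \ref{lemsquare} shares the Hilbert space and intertwiner with the asymptote of $T$, together with the elementary identity $\varrho(U_{\mathbb{T}})^2=U_{\mathbb{T}}$. The only place one must be slightly careful is to record that passing from equal commutants to equal hyperinvariant lattices is immediate from the definition of $\operatorname{Hlat}$, and that the ambient transformation $X$ is literally the same in the two quasianalyticity statements so that ``$X^{-1}\mathcal{E}=\{0\}$'' has identical meaning in both directions of the equivalence.
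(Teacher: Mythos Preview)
Your proposal is correct and follows the same overall strategy as the paper: invoke Lemma~\ref{lemsquare} to identify $(X,\varrho(U_{\mathbb T}))$ as a unitary asymptote of $R$ with the same intertwiner $X$, reduce the equivalence to $\operatorname{Hlat}U_{\mathbb T}=\operatorname{Hlat}\varrho(U_{\mathbb T})$, and derive this from $\{U_{\mathbb T}\}'=\{\varrho(U_{\mathbb T})\}'$.

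The only difference is in how the commutant equality is established. The paper observes that $\varrho(U_{\mathbb T})$ is unitarily equivalent to $U_\sigma$ (so its commutant is abelian), notes $\varrho(U_{\mathbb T})\in\{U_{\mathbb T}\}'$, and then appeals to \cite[Proposition~11]{ks14}. Your argument is more direct and elementary: $\varrho(U_{\mathbb T})$ is a Borel function of $U_{\mathbb T}$ for one inclusion, and $U_{\mathbb T}=\varrho(U_{\mathbb T})^2$ for the other. Your route avoids the external citation and the detour through unitary equivalence with $U_\sigma$; the paper's route has the minor advantage of recording along the way that $\varrho(U_{\mathbb T})\cong U_\sigma$, a fact used again later in Corollary~\ref{cormain}.
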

\begin{proof} 
It is well known and easy to see that $\varrho(U_{\mathbb T})$ is unitarily equivalent to $U_\sigma$ with 
$\sigma=\{ \mathrm{e}^{\mathrm{i}t}\ : \ t\in(0,\pi)\}$. Therefore, $\{\varrho(U_{\mathbb T})\}'$ is an abelian algebra. 
Since $\varrho(U_{\mathbb T})\in\{U_{\mathbb T}\}'$ and  $\{U_{\mathbb T}\}'$ is abelian, we conclude that 
%$$\operatorname{Hlat}\varrho(U_{\mathbb T}) = \operatorname{Lat}\varrho(U_{\mathbb T}).$$ 
 $$\{\varrho(U_{\mathbb T})\}'= \{U_{\mathbb T}\}'$$
by {\cite[Proposition 11]{ks14}} (quasianalyticity is not used in the proof there). 
 %$\operatorname{Hlat}U_{\mathbb T}\subset \operatorname{Lat}\varrho(U_{\mathbb T})$. 
%Since $U_{\mathbb T}\in\{\varrho(U_{\mathbb T})\}'$, we have 
%$\operatorname{Hlat}\varrho(U_{\mathbb T})\subset \operatorname{Lat}U_{\mathbb T}$. 
%If $\mathcal E \in\operatorname{Lat}U_{\mathbb T}$, but $ \mathcal E\not\in \operatorname{Lat}U_{\mathbb T}$, 
%then $ \mathcal E\not\in \operatorname{Lat}\varrho(U_{\mathbb T})$. 
Thus, 
\begin{equation}\label{hlatsquare}\operatorname{Hlat}\varrho(U_{\mathbb T}) = 
 \operatorname{Hlat}U_{\mathbb T}= \{L^2(\tau)\ : \ \tau\subset\mathbb T\}.\end{equation}

Suppose that $T$ is quasianalytic. It follows from the definition \eqref{defquasi} of quasianalyticity and  \eqref{hlatsquare} 
that  
\begin{equation}\label{ttquasi}X^{-1}L^2(\tau)=\{0\}\end{equation}
 for every measurable set $\tau\subset\mathbb T$ such that $\mathbf{m}(\tau)<1$. 
Since $(X, \varrho(U_{\mathbb T}))$ is a unitary asymptote of $R$, 
we obtain that $R$ is quasianalytic by \eqref{ttquasi} and \eqref{hlatsquare}. 

Conversely, if $R$ is quasianalytic, then $T$ is quasianalytic by the same reasoning. 

Note that ``if" part is a particular case of {\cite[Corollary 13]{ker15}} (applied to $R$). 
\end{proof}

\section{Construction of $T$}

Recall that $\mathbf{1}(z)=1$ and $\chi(z)=z$, $z\in\operatorname{clos}\mathbb D$, and 
 $H^2(\mathbb D)$ denotes the Hardy space in $\mathbb D$.  
Set $S= U_{\mathbb T}|_{H^2(\mathbb D)}$ and    $H^2_-(\mathbb D)=L^2(\mathbb T)\ominus H^2(\mathbb D)$. 

In the following proposition {\cite[Proposition 3.1]{gamstudia}} (based on \cite{cass}), 
{\cite[Theorem 3]{ker1}} (applied to $T^*$) and {\cite[Sec. 5]{ks14}} are combined, therefore, its proof is omitted.

\begin{proposition}\label{propttmain}  Suppose  that $\mathcal H_0$ is a Hilbert space, 
$T_0\in\mathcal L(\mathcal H_0)$ is a contraction of class $C_{00}$, 
 $X_0\in\mathcal L(\mathcal H_0, H^2_-(\mathbb D))$ is such that $\ker X_0=\{0\}$,  
 $\operatorname{clos}X_0\mathcal H_0=H^2_-(\mathbb D)$   and 
$X_0 T_0=P_{H^2_-(\mathbb D)}U_{\mathbb T}|_{H^2_-(\mathbb D)} X_0$.
Put $$T=\begin{pmatrix} S & (\cdot , X_0^\ast \overline\chi)\mathbf{1} \cr \mathbb O & T_0\end{pmatrix}.$$
Then $T$ is similar to a contraction and $T$ is of class $C_{10}$.   Therefore, 
 $T$ admits an $H^\infty$-functional calculus.
%$(I_{H^2(\mathbb D)}\oplus X_0)T = U_{\mathbb T}(I_{H^2(\mathbb D)}\oplus X_0)$, 
Furthermore, $(I_{H^2(\mathbb D)}\oplus X_0, U_{\mathbb T})$ is a unitary asymptote of $T$. Therefore, 
$\mathbb T\subset\sigma(T)\subset\operatorname{clos}\mathbb D$, and $T$ is quasianalytic if and only if 
 \begin{equation}\label{quasidisk}P_{H^2_-(\mathbb D)}L^2(\tau)\cap  X_0\mathcal H_0=\{0\}
\end{equation} for every measurable set $\tau\subset\mathbb T$ such that $\mathbf{m}(\tau)<1$.

Let $\eta\in L^\infty(\mathbb T)$. Then  $\eta\in\widehat\gamma_T(\{T\}')$ (where  $\widehat\gamma_T$ is defined in \eqref{gamma})
%$\eta$ belongs to the functional commutant of $T$
 if and only if the mapping 
\begin{equation}\label{gammarr} (I_{H^2(\mathbb D)}\oplus X_0)^{-1}\eta (U_{\mathbb T})(I_{H^2(\mathbb D)}\oplus X_0)
\end{equation} 
is defined and  bounded, and then   $\widehat\gamma_T^{-1}(\eta)$ is equal to the operator in \eqref{gammarr}. 
 \end{proposition}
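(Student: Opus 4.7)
The plan is to extract the statement from the three cited sources by first pinning down an explicit intertwining. I would begin with the key calculation that $(I_{H^2(\mathbb D)}\oplus X_0)T = U_{\mathbb T}(I_{H^2(\mathbb D)}\oplus X_0)$. This reduces on the $H^2(\mathbb D)$-coordinate to $U_{\mathbb T}|_{H^2(\mathbb D)}=S$, and on the $\mathcal H_0$-coordinate to the combination of the hypothesis $P_{H^2_-(\mathbb D)}U_{\mathbb T}X_0 h = X_0 T_0 h$ with the easy identity $P_{H^2(\mathbb D)}U_{\mathbb T}g = (g,\overline\chi)\mathbf 1$ for $g\in H^2_-(\mathbb D)$ (only the $\overline\chi$-Fourier coefficient of $g$ survives multiplication by $\chi$ into $H^2(\mathbb D)$). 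Writing $g = X_0 h$ converts this precisely into the off-diagonal entry $(h, X_0^\ast\overline\chi)\mathbf 1$. Since $X_0$ is injective with dense range, so is $I_{H^2(\mathbb D)}\oplus X_0$.

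Similarity to a contraction, membership in $C_{\cdot 0}$, and complete polynomial boundedness then follow from {\cite[Proposition 3.1]{gamstudia}} (which adapts \cite{cass} to this block construction); the $H^\infty$-functional calculus follows from Sz.-Nagy--Foias theory for contractions whose minimal unitary dilation has absolutely continuous spectral measure. The class $C_{1\cdot}$ is immediate from the intertwining, because for $0\neq x\in H^2(\mathbb D)\oplus\mathcal H_0$ one has $\|T^n x\|\geq \|(I_{H^2(\mathbb D)}\oplus X_0)x\|/\|I_{H^2(\mathbb D)}\oplus X_0\|>0$ uniformly in $n$.

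For the unitary asymptote claim I would invoke {\cite[Theorem 3]{ker1}} applied to $T^\ast$: for a polynomially bounded operator of class $C_{\cdot 0}$, a pair $(X,U)$ with $XT=UX$, $\ker X=\{0\}$, and $\bigvee_{n\geq 0}U^{-n}X\mathcal H=\mathcal K$ is a unitary asymptote. All three conditions hold here, the last because $(I_{H^2(\mathbb D)}\oplus X_0)(H^2(\mathbb D)\oplus\mathcal H_0) = H^2(\mathbb D)+X_0\mathcal H_0$ is already dense in $L^2(\mathbb T)$. The inclusion $\mathbb T=\sigma(U_{\mathbb T})=\sigma(\gamma_T(T))\subset\sigma(T)$ follows from \eqref{sigmagamma}, and $\sigma(T)\subset\operatorname{clos}\mathbb D$ is power-boundedness. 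For quasianalyticity I would exploit $\operatorname{Hlat}U_{\mathbb T}=\{L^2(\tau):\tau\subset\mathbb T\}$ and unfold \eqref{defquasi}: $(I_{H^2(\mathbb D)}\oplus X_0)^{-1}L^2(\tau) = \{f\oplus h : f+X_0 h\in L^2(\tau)\}$; decomposing $f+X_0 h$ via the orthogonal projections onto $H^2(\mathbb D)$ and $H^2_-(\mathbb D)$ shows that this preimage vanishes precisely when $P_{H^2_-(\mathbb D)}L^2(\tau)\cap X_0\mathcal H_0 = \{0\}$, i.e., exactly \eqref{quasidisk}.

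The functional commutant characterization follows from {\cite[Sec. 5]{ks14}}: since $T$ is $C_{1\cdot}$ with unitary asymptote the cyclic operator $U_{\mathbb T}$, $\widehat\gamma_T$ is injective, and an $\eta\in L^\infty(\mathbb T)$ lies in its range precisely when the formal pullback in \eqref{gammarr} defines a bounded operator, in which case it equals $\widehat\gamma_T^{-1}(\eta)$. The main obstacle, in my view, is the similarity-to-a-contraction assertion in step~2: it rests on the Cassier-type machinery of \cite{cass} as adapted in \cite{gamstudia} and is genuinely nontrivial. Everything else is bookkeeping once the intertwining identity and that deep input are in hand.
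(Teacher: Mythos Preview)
Your proposal is correct and follows exactly the paper's approach: the paper omits the proof entirely, simply stating that it combines \cite[Proposition 3.1]{gamstudia} (based on \cite{cass}), \cite[Theorem 3]{ker1} applied to $T^*$, and \cite[Sec.~5]{ks14}. You have reconstructed precisely this combination, supplying the intertwining identity and the bookkeeping the paper leaves implicit. One small point worth making explicit in your quasianalyticity step: after the orthogonal decomposition forces $X_0h=0$ you are left with $f\in H^2(\mathbb D)\cap L^2(\tau)$, and you need the standard fact that this intersection is $\{0\}$ whenever $\mathbf m(\tau)<1$ (F.~and M.~Riesz); otherwise the equivalence with \eqref{quasidisk} is not quite complete.
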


Let $\nu$ be a positive finite Borel measure on $\mathbb D$. Clearly, the operator of multiplication by the independent variable on $L^2(\nu)$
is a contraction of class $C_{00}$. Denote by 
$P^2(\nu)$ the closure of (analytic) polynomials in $L^2(\nu)$, and by $S_\nu$
the operator of multiplication by the independent variable in $P^2(\nu)$, i.e.
$$S_\nu\in\mathcal L(P^2(\nu)), \ \ \ \ (S_\nu f)(z)=zf(z), \ \ f\in P^2(\nu),
 \ \ z\in \mathbb D.$$
Since $S_\nu$ is the restriction on an invariant subspace of a contraction of class  $C_{00}$,
$S_\nu$ is a contraction of class  $C_{00}$, too. Furthermore, if $H^2(\mathbb D)\subset L^2(\nu)$, then 
 the natural imbedding of $H^2(\mathbb D)$ into $L^2(\nu)$ is bounded and $$P^2(\nu)=\operatorname{clos}_{L^2(\nu)}H^2(\mathbb D).$$  

Set \begin{equation}\label{wwdef} (Wh)(z)=\overline zh(\overline z), \ \ h\in L^2(\mathbb T), \ z\in\mathbb T.\end{equation}
Clearly,  $W\in\mathcal L(L^2(\mathbb T))$  is unitary, $W=W^{-1}$, and $WH^2(\mathbb D)=H^2_-(\mathbb D)$. 

\begin{proposition}\label{propttnu} Let $\nu$   be a positive finite Borel measure on $\mathbb D$ such that  every $f\in P^2(\nu)$ 
is analytic in $\mathbb D$, for every $\lambda\in\mathbb D$ the mapping $f\mapsto f(\lambda)$, $P^2(\nu)\to \mathbb C$ is bounded,  
and $H^2(\mathbb D)\subset P^2(\nu)$. Let $J_\nu\in\mathcal L(H^2(\mathbb D), P^2(\nu))$ be the natural imbedding. 
Set $\mathcal H_0=P^2(\nu)$, $T_0=S_\nu^*$, $X_0=WJ_\nu^*$ and define $T$ as in Proposition \ref{propttmain}. 
Then  $\sigma(T)=\mathbb T$.\end{proposition}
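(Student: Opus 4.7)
My plan is to prove invertibility of $T-\lambda$ for every $\lambda\in\mathbb{D}$ by block-matrix inversion, which combined with the bounds $\mathbb{T}\subset\sigma(T)\subset\operatorname{clos}\mathbb{D}$ from Proposition~\ref{propttmain} will yield $\sigma(T)=\mathbb{T}$.

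First I will identify the off-diagonal block. Since $X_0=WJ_\nu^*$ and $W$ is unitary and self-adjoint, $X_0^*=J_\nu W$; direct computation using $(Wh)(z)=\overline z\,h(\overline z)$ gives $(W\overline\chi)(z)=\overline z\cdot z=1$ on $\mathbb T$, so $X_0^*\overline\chi=J_\nu\mathbf 1=\mathbf 1$. Hence $Rk=\langle k,\mathbf 1\rangle_{P^2(\nu)}\mathbf 1$, and for $(f,g)\in H^2(\mathbb D)\oplus P^2(\nu)$ the equation $(T-\lambda)(h,k)=(f,g)$ is equivalent to the system
\[
(S-\lambda)h+\langle k,\mathbf 1\rangle\mathbf 1=f,\qquad (T_0-\lambda)k=g.
\]
Evaluating the first equation at $z=\lambda$ forces the compatibility condition $\langle k,\mathbf 1\rangle=f(\lambda)$; once this is assumed, the first equation alone yields $h=(f-f(\lambda)\mathbf 1)/(z-\lambda)\in H^2(\mathbb D)$ uniquely, with $\|h\|\leq C_\lambda\|f\|$ via bounded point evaluation at $\lambda$.

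For the second equation coupled with the compatibility, I will use that $\ker(T_0-\lambda)=\ker(S_\nu^*-\lambda)$ is spanned by the $P^2(\nu)$-reproducing kernel $k_{\overline\lambda}$ at $\overline\lambda$ (since $S_\nu^*k_\mu=\overline\mu\,k_\mu$ for every $\mu\in\mathbb D$), and that $\langle k_{\overline\lambda},\mathbf 1\rangle_{P^2(\nu)}=\mathbf 1(\overline\lambda)=1\neq 0$. So once any bounded particular solution $k_0\in P^2(\nu)$ of $(T_0-\lambda)k_0=g$ is available, setting $k:=k_0+(f(\lambda)-\langle k_0,\mathbf 1\rangle)\,k_{\overline\lambda}$ satisfies both the equation and the compatibility, with $\|k\|\leq C_\lambda(\|f\|+\|g\|)$. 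The assignment $(f,g)\mapsto(h,k)$ is then a bounded two-sided inverse of $T-\lambda$.

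The crux of the proof, and the step I expect to be the main technical obstacle, is producing such a bounded particular solution, equivalently that $T_0-\lambda=S_\nu^*-\lambda$ is surjective on $P^2(\nu)$, equivalently that $S_\nu-\overline\lambda$ is bounded below on $P^2(\nu)$ for every $\overline\lambda\in\mathbb D$. The range of $T_0-\lambda$ is automatically dense because $\ker(S_\nu-\overline\lambda)=\{0\}$ follows from the assumed analyticity of every $f\in P^2(\nu)$ on $\mathbb D$; what must be argued is closedness, i.e.\ the estimate $\|q\|_{P^2(\nu)}\leq C_\lambda\|(z-\overline\lambda)q\|_{P^2(\nu)}$. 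This should follow from the proposition's hypotheses---bounded inclusion $H^2(\mathbb D)\subset P^2(\nu)$, analyticity on $\mathbb D$, and bounded point evaluations throughout $\mathbb D$: together they force the ``division'' map $p\mapsto(p-p(\overline\lambda))/(z-\overline\lambda)$, initially defined on polynomials, to be bounded in the $P^2(\nu)$-norm, and extension by density to $\{q\in P^2(\nu):q(\overline\lambda)=0\}$ yields the required bound below.
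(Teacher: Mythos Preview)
Your approach is essentially identical to the paper's: compute $X_0^*\overline\chi=\mathbf 1$, establish that $S_\nu-\overline\lambda$ is left-invertible on $P^2(\nu)$ via the division property $f\mapsto (f-f(\overline\lambda))/(z-\overline\lambda)$, and then solve the $2\times 2$ block system using the reproducing kernel $k_{\overline\lambda}$ to enforce the compatibility condition $\langle k,\mathbf 1\rangle=f(\lambda)$. The one step you flag as the crux but only sketch---boundedness of the division map on $P^2(\nu)$---is precisely what the paper does not prove either but instead dispatches by citing \cite[Lemma~4.5]{ars}.
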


\begin{proof} Let $\lambda \in\mathbb D$. There exists $k_\lambda\in P^2(\nu)$ such that $(f,k_\lambda)=f(\lambda)$
for every $f\in P^2(\nu)$. Clearly, $S_\nu^*k_\lambda = \overline\lambda k_\lambda$. 
 For every $\lambda \in\mathbb D$ and every $f\in P^2(\nu)$ define 
$$f_\lambda(z)=\frac{f(z)-f(\lambda)}{z-\lambda}, \ \ \ z\in\mathbb D.$$ 
Then $f_\lambda\in P^2(\nu)$. Therefore,  $P^2(\nu) = (S_\nu-\lambda I)P^2(\nu) \dotplus \mathbb C\text{{\bf 1}}$ 
for every $\lambda\in\mathbb D$.
%, where $\text{{\bf 1}}\in P^2(\nu) $, $\text{{\bf 1}}(z)=1$ for all $z\in\mathbb D$. 
(For proof, see,  for example, {\cite[Lemma 4.5]{ars}}.) Thus, $S_\nu-\lambda I$ is left-invertible, therefore, 
$(S_\nu-\lambda I)^*P^2(\nu)=P^2(\nu)$. Furthermore, $$\dim\ker(S_\nu-\lambda I)^*=1.$$ 

Let $\lambda \in\mathbb D$, let $h_\diamond\in H^2(\mathbb D)$, and let $f_\diamond\in P^2(\nu)$. There exists $f\in P^2(\nu)$ such that 
$(S_\nu-\lambda I)^*f=f_\diamond$. Set $h(z)=\frac{h_\diamond(z)-h_\diamond(\overline\lambda)}{z-\overline\lambda}$, $z\in\mathbb D$. Then 
$h\in H^2(\mathbb D)$. Taking into account that 
$$ X_0^\ast \overline\chi=J_\nu W^{-1}\overline\chi=J_\nu\mathbf{1}=\mathbf{1},$$ we obtain that 
$$(T-\overline\lambda I)(h\oplus (f+(h_\diamond(\overline\lambda)-(f,\text{{\bf 1}}))k_\lambda)=h_\diamond\oplus f_\diamond.$$
If $\lambda \in\mathbb D$,  $h\in H^2(\mathbb D)$, and  $f\in P^2(\nu)$ are such that $(T-\overline\lambda I)(h\oplus f)=0$, 
then there exists $c\in\mathbb C$ such that $f=ck_\lambda$ and $(z-\overline\lambda)h(z)=-c$ for every $z\in\mathbb D$. Therefore, $c=0$. 
Thus, $\mathbb D\cap\sigma(T)=\emptyset$. By Proposition \ref{propttmain}, $\mathbb T\subset\sigma(T)\subset\operatorname{clos}\mathbb D$. 
\end{proof}
%Since $T$ is similar to a contraction,  $\sigma(T)\subset\operatorname{clos}\mathbb D$. Thus, $\sigma(T)\subset\mathbb T$. 
%By {\cite[Theorem 0.8]{rara}}, the spectrum of the restriction of an operator on its invariant subspace is contained in the polynomially 
%convex hull of  its spectrum. Since $\sigma(S)=\operatorname{clos}\mathbb D$, we conclude that  $\sigma(T)=\mathbb T$. 

For $0\leq r<1$ set 
\begin{equation}\label{circle} D_r=\{z\in\mathbb C\ :\ |z-r|<1-r\}, 
 \ \ \Gamma_r=\partial D_r=\{z\in\mathbb C\ :\ |z-r|=1-r\}, 
\end{equation}
denote by $\nu_r$ the arc length measure on $\Gamma_r$. (Of course, $D_0=\mathbb D$, $\Gamma_0=\mathbb T$, and $\nu_0=2\pi\mathbf{m}$.)
 Using a linear change of variable and well-known properties of 
$H^2(\mathbb D)$, it is easily seen that every $f\in P^2(\nu_r)$ is analytic in $D_r$, and for every $\lambda\in D_r$ 
the mapping $f\mapsto f(\lambda)$, $P^2(\nu_r)\to \mathbb C$ is bounded.  

\begin{lemma}\label{lemcircle} Let $\{a_n\}_{n=1}^\infty$ and $\{r_n\}_{n=1}^\infty$ be families of numbers such that $a_n>0$, 
$0<r_{n+1}<r_n$ for every $n$, $\sum_{n=1}^\infty a_n<\infty$, and $r_n\to 0$. Set 
$$ \nu = \frac{1}{2\pi}\sum_{n=1}^\infty a_n \nu_{r_n}.$$
Then $H^2(\mathbb D)\subset L^2(\nu)$, if $f\in P^2(\nu)$, then $f$  
is analytic in $\mathbb D$, and for every $\lambda\in\mathbb D$ the mapping $f\mapsto f(\lambda)$, $P^2(\nu)\to \mathbb C$ is bounded.  
\end{lemma}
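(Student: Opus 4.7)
My plan is to prove the three assertions in order: first $H^2(\mathbb{D})\subset L^2(\nu)$, next that every $f\in P^2(\nu)$ is analytic in $\mathbb{D}$, and finally that point evaluations are bounded. The key observation is that the single-circle case discussed just before the lemma already identifies $P^2(\nu_{r_n})$ with the Hardy space $H^2(D_{r_n})$ via the linear map $\varphi_{r_n}(w)=r_n+(1-r_n)w$ from $\mathbb{D}$ onto $D_{r_n}$, so the remaining task is to glue behavior on the circles $\Gamma_{r_n}$ together in a consistent way.

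For $H^2(\mathbb{D})\subset L^2(\nu)$, fix $h\in H^2(\mathbb{D})$ and let $u$ denote the Poisson integral of $|h^*|^2$, which is the least harmonic majorant of $|h|^2$ on $\mathbb{D}$. With $\varphi_r(w)=r+(1-r)w$, the composition $u\circ\varphi_r$ is a harmonic majorant of $|h\circ\varphi_r|^2$ on $\mathbb{D}$ whose value at $0$ is $u(r)$, hence $h\circ\varphi_r\in H^2(\mathbb{D})$ with $\|h\circ\varphi_r\|_{H^2}^2\leq u(r)$. Because $\varphi_r(e^{i\theta})\in\mathbb{D}$ for every $\theta\neq 0$, the non-tangential boundary values of $h\circ\varphi_r$ agree almost everywhere with the interior values $h(\varphi_r(e^{i\theta}))$, so a change of variables produces
\[
\int_{\Gamma_r}|h|^2\,d\nu_r=2\pi(1-r)\|h\circ\varphi_r\|_{H^2}^2\leq 2\pi(1-r)u(r)\leq 2\pi(1+r)\|h\|_{H^2}^2
\]
after applying the uniform estimate $P(r,e^{i\phi})\leq(1+r)/(1-r)$ on the Poisson kernel. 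Summing the weighted integrals in $n$ then yields $\|h\|_{L^2(\nu)}^2\leq 2\bigl(\sum_n a_n\bigr)\|h\|_{H^2}^2<\infty$.

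For the remaining assertions, fix $f\in P^2(\nu)$ and polynomials $p_k\to f$ in $L^2(\nu)$. Because $a_n>0$, the restriction $L^2(\nu)\to L^2(\nu_{r_n})$ is bounded with $\|g\|_{L^2(\nu_{r_n})}^2\leq(2\pi/a_n)\|g\|_{L^2(\nu)}^2$, so $p_k\to f|_{\Gamma_{r_n}}$ in $L^2(\nu_{r_n})$, placing $f|_{\Gamma_{r_n}}$ in $P^2(\nu_{r_n})$. By the single-circle case, $f|_{\Gamma_{r_n}}$ has an analytic extension $F_n$ to $D_{r_n}$, and the polynomials $p_k$ converge to $F_n$ uniformly on compact subsets of $D_{r_n}$. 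For $n<m$ one has $D_{r_n}\subset D_{r_m}$, and on $D_{r_n}$ the same sequence $p_k$ converges also to $F_m|_{D_{r_n}}$; hence $F_m$ extends $F_n$. Since $\bigcup_n D_{r_n}=\mathbb{D}$, these paste together to a single analytic $F$ on $\mathbb{D}$, giving the analyticity statement. For the point evaluation bound, given $\lambda\in\mathbb{D}$ choose $n$ with $\lambda\in D_{r_n}$; boundedness of evaluation at $\lambda$ on $P^2(\nu_{r_n})$ combined with the restriction inequality above yields $|F(\lambda)|\leq C_{\lambda,n}\|f\|_{L^2(\nu)}$.

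The main obstacle I expect is the first step. Because each $\Gamma_r$ is internally tangent to $\mathbb{T}$ at $z=1$, the naive reproducing-kernel bound $|h(z)|^2\leq\|h\|_{H^2}^2/(1-|z|^2)$ produces a divergent integral over $\Gamma_r$, so one must instead exploit subharmonicity of $|h|^2$ together with a Poisson kernel estimate on the harmonic majorant to obtain a constant uniform in $r\in[0,1)$.
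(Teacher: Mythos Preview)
Your proof is correct and follows the same route as the paper: the uniform estimate $\frac{1}{2\pi}\int_{\Gamma_r}|h|^2\,d\nu_r\leq 2\|h\|_{H^2}^2$ gives $H^2(\mathbb{D})\subset L^2(\nu)$, and then the containment $P^2(\nu)\subset P^2(\nu_{r_n})$ together with $\bigcup_n D_{r_n}=\mathbb{D}$ yields analyticity and bounded point evaluations. The paper simply cites the uniform embedding bound from \cite{nik}, whereas you supply a self-contained proof of it via the least harmonic majorant and the Poisson-kernel estimate $P(r,e^{i\phi})\leq(1+r)/(1-r)$, arriving at exactly the same constant; otherwise the arguments coincide.
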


\begin{proof} Let $h\in H^2(\mathbb D)$. Since $\frac{1}{2\pi}\int_{\Gamma_r}|h|^2{\mathrm d}\nu_r \leq 2\|h\|_{H^2(\mathbb D)}^2$
(see, for example, {\cite[Lemma I.A.6.3.3]{nik}}), we conclude that $H^2(\mathbb D)\subset L^2(\nu)$. On the other hand, 
$P^2(\nu)\subset P^2(\nu_{r_n})$ for every $n$, $D_{r_n}\subset D_{r_{n+1}}$ and $\cup_{n=1}^\infty D_{r_n}=\mathbb D$.  
Therefore, every $f\in P^2(\nu)$ 
is analytic in $\mathbb D$, and for every $\lambda\in\mathbb D$ the mapping $f\mapsto f(\lambda)$, $P^2(\nu)\to \mathbb C$ is bounded.  
\end{proof}

\begin{remark} The construction of the measure $\nu$ from Lemma \ref{lemcircle} is close to \cite{krietetrutt1}, \cite{krietetrutt2}.
\end{remark}

\section{Transfer to the half-plane and Fourier transform}

Set $\mathbb C_+=\{z\in\mathbb C\ :\ \operatorname{Im}z>0\}$,  $\mathbb C_-=\{z\in\mathbb C\ :\ \operatorname{Im}z<0\}$ and 
\begin{equation}\label{kappa} \varpi(z)=\frac{z-\mathrm{i}}{z+\mathrm{i}}, \ \ \ z\in\mathbb C. 
\end{equation}
It is well known and easy to see that $\varpi|_{\mathbb C_+}$ is a conformal mapping of $\mathbb C_+$ onto $\mathbb D$, and 
for every $0\leq r<1$ and every $f$ for which the integrals below are defined we have 
\begin{equation}\label{jjmunu}\frac{1}{2\pi}\int_{\Gamma_r}f{\mathrm d}\nu_r =
\frac{1}{\pi}\int_{-\infty}^{+\infty}(f\circ\varpi)\Bigl(t+\mathrm{i}\frac{r}{1-r}\Bigr)\frac{{\mathrm d}t}
{t^2+(\frac{1}{1-r})^2},\end{equation}
where $\Gamma_r$ and $\nu_r$ are defined by \eqref{circle} and just after  \eqref{circle}, respectively.
Set \begin{equation}\label{jjkappa}
\mathcal J f(z)=\frac{1}{\sqrt\pi}\frac{1}{z+\mathrm{i}}(f\circ\varpi)(z)
\end{equation} for all functions $f$ and $z\in\mathbb C$ for which the definition \eqref{jjkappa} has sense. 
 Then $\mathcal J$ is a unitary transformation from $L^2(\mathbb T)$ onto $L^2(\mathbb R)$, and  for 
$\eta\in L^\infty(\mathbb T)$ the operator $\mathcal J\eta(U_{\mathbb T})\mathcal J^{-1}$ is 
the multiplication by $\eta\circ\varpi$ acting on $L^2(\mathbb R)$. Furthermore, 
$\mathcal J H^2(\mathbb D)=H^2(\mathbb C_+)$ 
(see, for example, {\cite[Sec. I.A.6.3.1]{nik}}). 
Since $f\in H^2(\mathbb D)$ if and only if
 $f_*(z):= \frac{1}{z}f(\frac{1}{z})$, $|z|>1$, is from  $H^2_-(\mathbb D)$, we have 
$$(\mathcal J f_*)(z)= -(\mathcal J f)(-z), \ \ z\in\mathbb C_-, $$
and
\begin{equation}\label{jjwwjj} (\mathcal JW\mathcal J^{-1}h)(z)=-h(-z) 
\text{ for } h\in H^2(\mathbb C_+) \text{ and } z\in \mathbb C_-.\end{equation}

For a measure $\nu$ defined as in Lemma \ref{lemcircle} set 
\begin{equation}\label{mu} {\mathrm d}\mu = \sum_{n=1}^\infty a_n {\mathrm d}t|_{\mathbb R+\mathrm{i}v_n}
\ \ \text{ with } v_n=\frac{r_n}{1-r_n}, \ \ n\geq 1.
\end{equation}
A straightforward calculation based on \eqref{jjmunu} shows that 
\begin{equation}\label{jjunit}\mathcal J \text{  is a unitary transformation from  } L^2(\mathbb D,\nu) \text{  onto }  L^2(\mathbb C_+,\mu).\end{equation}
Since $P^2(\nu)=\operatorname{clos}_{L^2(\nu)}H^2(\mathbb D)$ and $\mathcal JH^2(\mathbb D)=H^2(\mathbb C_+)$, we conclude that $ H^2(\mathbb C_+)\subset L^2(\mathbb C_+,\mu)$ and 
\begin{equation}\label{munu}\mathcal JP^2(\nu)=\operatorname{clos}_{L^2(\mu)}H^2(\mathbb C_+).\end{equation} 
Denote by $J_\mu$ the natural imbedding of  $ H^2(\mathbb C_+)$ into 
$ \operatorname{clos}_{L^2(\mu)}H^2(\mathbb C_+)$. 

\bigskip

Let $\mathcal D(\mathbb R)$ be the space of test functions, that is, the space of functions from $C^\infty(\mathbb R)$ 
with compact support. Let  $\mathcal S(\mathbb R)$ and $\mathcal S'(\mathbb R)$ be the spaces of rapidly decreasing functions and of tempered distributions, respectively. Recall that  $\mathcal S'(\mathbb R)$ is the dual  space of $\mathcal S(\mathbb R)$, $\mathcal D(\mathbb R)$ is contained and dense in  $\mathcal S(\mathbb R)$, and  $L^p(\mathbb R)\subset\mathcal S'(\mathbb R)$, $1\leq p\leq\infty$. 
The \emph{Fourier transform} $\mathcal F$ of a function $f$ defined on $\mathbb R$ and its inverse $\mathcal F^{-1}$ act by the formulas 
\begin{equation}\label{fourier} (\mathcal F f)(t)=\frac{1}{\sqrt{2\pi}}\int_{\mathbb R}\mathrm{e}^{-\mathrm{i}ts}f(s){\mathrm d}s, \ \ 
(\mathcal F^{-1} f)(t)=\frac{1}{\sqrt{2\pi}}\int_{\mathbb R}\mathrm{e}^{\mathrm{i}ts}f(s){\mathrm d}s,\ \ \ t\in\mathbb R.\end{equation}
It is well known that  $\mathcal F$ and $\mathcal F^{-1}$ are linear continuous mutually inverse bijections 
on $\mathcal S(\mathbb R)$ and on $\mathcal S'(\mathbb R)$,  and 
 $\mathcal F$ is unitary on $L^2(\mathbb R)$. It follows from \eqref{jjwwjj} that 
\begin{equation}\label{ffjjwwjj}\mathcal F\mathcal JW\mathcal J^{-1}= \mathcal JW\mathcal J^{-1}\mathcal F.\end{equation}

For $\Psi\in\mathcal S'(\mathbb R)$ the multiplication $\mathcal M_\Psi$ 
by $\Psi$ and the convolution $\mathcal C_\Psi$ with $\Psi$ are  linear continuous mappings from  $\mathcal S(\mathbb R)$
to $\mathcal S'(\mathbb R)$.  
 If $\Psi\in L^p(\mathbb R)$, $1\leq p\leq\infty$,
then $\mathcal M_\Psi$ and $\mathcal C_\Psi$ act in a usual way:  $(\mathcal M_\Psi f)(t)=\Psi(t) f(t)$ and 
\begin{equation}\label{defconv} (\mathcal C_\Psi f)(t) = \int_{\mathbb R}f(t-s)\Psi(s)\mathrm{d}s, \ \ \ t\in\mathbb R, \ \  \ f\in\mathcal S(\mathbb R).\end{equation}
It is well known that 
\begin{equation}\label{mmcc}\mathcal F\mathcal M_\Psi\mathcal F^{-1}f =
\frac{1}{\sqrt{2\pi}}\mathcal C_{\mathcal F\Psi}f, \ \ \ f\in\mathcal S(\mathbb R).\end{equation}
If $\eta\in L^\infty(\mathbb T)$, then $\mathcal C_{\mathcal F(\eta\circ\varpi)}$ has an extention from 
 $\mathcal S(\mathbb R)$ onto $L^2(\mathbb R)$ defined by \eqref{mmcc},  which is a (linear, bounded) operator on  $L^2(\mathbb R)$ 
 (that is,  $\mathcal C_{\mathcal F(\eta\circ\varpi)}\in\mathcal L(L^2(\mathbb R))$. 
 
 For $\alpha>0$ set 
\begin{equation}\label{thetaplane} \theta_\alpha(z)=\mathrm{e}^{\mathrm{i}\alpha z}, \  \ \ z\in \mathbb C.\end{equation} 
Then\begin{equation}\label{basisalphaf}
\bigl(\mathcal F(\theta_\alpha^nf)\bigr)(t)=(\mathcal F f)(t-n\alpha), \ \ \ t\in\mathbb R, \ \ n\in\mathbb Z, \ \ f\in L^2(\mathbb R)
 \end{equation}
%\begin{aligned} &(\mathcal F^{-1}f)(t)=\theta_\alpha^n\mathcal F^{-1}f(\cdot+n\alpha) \ \ \text{ for }
%f\in L^2(n\alpha,(n+1)\alpha)\\   & \ \ \ \text{ (where }  f(\cdot+n\alpha)\in L^2(0,\alpha) ), \ \  n\in\mathbb Z,\\& 
% \ \ \ \ \text{ and }  \|f\|_{L^2(0,\alpha)}=\|\theta_\alpha^n\mathcal F^{-1}f\|_{L^2(\mathbb R)}.\end{aligned} 
and 
\begin{equation}\label{shiftalpha}\frac{1}{\sqrt{2\pi}}(\mathcal C_{\mathcal F \theta_\alpha}f)(t)=f(t-\alpha), 
\ \ \ t\in\mathbb R, \ \  \ f\in L^2(\mathbb R).\end{equation}
Set $\mathcal K_\alpha=H^2(\mathbb C_+)\ominus\theta_\alpha H^2(\mathbb C_+)$.
By the Paley--Wiener theorem, 
\begin{equation}\label{ffhhplus}H^2(\mathbb C_+)=\mathcal F^{-1}L^2(0,+\infty)\end{equation} and 
 \begin{equation}\label{basisalpha}\theta_\alpha^n\mathcal K_\alpha
=\theta_\alpha^n\mathcal F^{-1}L^2(0,\alpha) = \mathcal F^{-1}L^2(n\alpha,(n+1)\alpha) \text{ for every } n\in\mathbb Z.\end{equation} 
For references see, for example, {\cite[Ch. VI]{katz}} or {\cite[Ch. 7]{rudin}}. 

\bigskip

For $\infty\leq b_1<b_2\leq+\infty$ and $w\colon(b_1,b_2)\to(0,+\infty)$ set
$$L^2((b_1,b_2),w)=\{f\colon(b_1,b_2)\to\mathbb C \ : \ \int_{b_1}^{b_2}|f(t)|^2w(t){\mathrm d}t<\infty\}.$$

\begin{proposition}\label{prophhplus} Let $\nu$ be as in Lemma \ref{lemcircle}, and let $\mu$ be defined by $\nu$ as in \eqref{mu}. 
For $\alpha>0$ set 
\begin{equation}\label{omegan}\frac{1}{\omega^2_\alpha(-n-1)}=\sum_{k=1}^\infty a_k \mathrm{e}^{-2\alpha nv_k},
 \ \ n\geq 0,\end{equation}
\begin{align*}\phi_{\alpha,n}(t)&= \sum_{k=1}^\infty a_k \mathrm{e}^{-2\alpha v_kn}\mathrm{e}^{-2v_kt}, \ \ t\in (0,\alpha),
 \ \ \text{ and } \\  
\phi_\alpha(t)&= \phi_{\alpha,n}(t-n\alpha), \ \ \ t\in (n\alpha,(n+1)\alpha),  \ \ n\geq 0.\end{align*}
Then 
\begin{equation}\label{ffp2mu}\begin{gathered}\mathcal F \text{ is a unitary transformation} \\ \text{  from }
\operatorname{clos}_{L^2(\mu)} H^2(\mathbb C_+) 
\text{ onto }L^2((0,+\infty),\phi_\alpha), \end{gathered}\end{equation}
%\mathcal F\operatorname{clos}_{L^2(\mu)}H^2(\mathbb C_+) = L^2((0,+\infty),\phi_\alpha),
 \begin{equation}\label{jjmuhh2plus}\begin{aligned}J_\mu^*\operatorname{clos}_{L^2(\mu)}H^2(\mathbb C_+) = 
\{&\oplus_{n=0}^\infty \theta_\alpha^n\mathcal F^{-1}f_n \ :\ f_n\in L^2(0,\alpha), \\ &
\sum_{n=0}^\infty \|f_n\|_{L^2(0,\alpha)}^2\omega^2_\alpha(-n-1)<\infty\},\end{aligned}
\end{equation}
\begin{equation}\label{xx00}\begin{gathered}( \mathcal F\mathcal JW\mathcal J^{-1}J_\mu^*\mathcal F^{-1}f)(t)
=-\phi_\alpha(-t)f(-t), \\ t\in(-\infty,0), \ \ f\in L^2((0,+\infty),\phi_\alpha).\end{gathered}
\end{equation}
\end{proposition}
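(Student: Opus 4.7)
The plan is to transport the whole picture through $\mathcal F$ so that the three claims become elementary facts about weighted $L^2$-spaces on $(0,+\infty)$. For $h\in H^2(\mathbb C_+)$, Plancherel applied to the horizontal slice $h(\cdot+\mathrm i v)=\mathcal F^{-1}(\mathrm e^{-v\,\cdot}\mathcal F h)$ gives $\int_{\mathbb R}|h(t+\mathrm iv)|^2\,{\mathrm d}t=\int_0^{+\infty}\mathrm e^{-2vs}|(\mathcal F h)(s)|^2\,{\mathrm d}s$. Integrating against $\mu$ and noting that the piecewise formula for $\phi_\alpha$ collapses to $\phi_\alpha(s)=\sum_k a_k\mathrm e^{-2v_ks}$ for all $s>0$ yields $\|h\|_{L^2(\mu)}^2=\int_0^{+\infty}\phi_\alpha|(\mathcal Fh)|^2$. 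Since $\phi_\alpha$ is bounded (as $\sum a_k<\infty$) and strictly positive, $\mathcal FH^2(\mathbb C_+)=L^2(0,+\infty)$ embeds continuously and densely in $L^2((0,+\infty),\phi_\alpha)$ (approximate $g\in L^2(\phi_\alpha)$ by $g\chi_{(1/N,N)}\chi_{\{|g|\le N\}}$). Hence $\mathcal F|_{H^2(\mathbb C_+)}$ extends to a surjective isometry from $\operatorname{clos}_{L^2(\mu)}H^2(\mathbb C_+)$ onto $L^2((0,+\infty),\phi_\alpha)$, giving \eqref{ffp2mu}.

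Under this identification and Paley--Wiener \eqref{ffhhplus}, the embedding $J_\mu$ becomes the inclusion $L^2(0,+\infty)\hookrightarrow L^2((0,+\infty),\phi_\alpha)$, whose Hilbert-space adjoint is the multiplication $g\mapsto\phi_\alpha g$. Thus $\mathcal F(J_\mu^*\operatorname{clos}_{L^2(\mu)}H^2(\mathbb C_+))$ consists exactly of those $\psi\in L^2(0,+\infty)$ with $\int_0^{+\infty}|\psi|^2/\phi_\alpha<+\infty$. Decomposing $\psi$ along $(n\alpha,(n+1)\alpha)$ via \eqref{basisalphaf}--\eqref{basisalpha} as $\psi=\oplus_n f_n(\cdot-n\alpha)$ with $f_n\in L^2(0,\alpha)$ turns this condition into $\sum_n\int_0^\alpha|f_n(s)|^2/\phi_{\alpha,n}(s)\,{\mathrm d}s<+\infty$. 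The monotonicity $r_n\downarrow 0$ forces $v_k\le v_1$ for every $k$, whence $\mathrm e^{-2v_1\alpha}\le\mathrm e^{-2v_ks}\le 1$ on $s\in(0,\alpha)$, giving the uniform comparison
$$\mathrm e^{-2v_1\alpha}\cdot\frac{1}{\omega_\alpha^2(-n-1)}\le\phi_{\alpha,n}(s)\le\frac{1}{\omega_\alpha^2(-n-1)}.$$
Substituting, the integral condition becomes $\sum_n\omega_\alpha^2(-n-1)\|f_n\|_{L^2(0,\alpha)}^2<+\infty$, which proves \eqref{jjmuhh2plus}.

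Finally, for $f\in L^2((0,+\infty),\phi_\alpha)$ the previous step already gives $J_\mu^*\mathcal F^{-1}f=\mathcal F^{-1}(\phi_\alpha f)\in H^2(\mathbb C_+)$. Formula \eqref{jjwwjj}, read on the boundary $\mathbb R$, reduces to $(\mathcal J W\mathcal J^{-1}g)(x)=-g(-x)$ for $g\in H^2(\mathbb C_+)$. Applying $\mathcal F$ to $x\mapsto-g(-x)$ with $g=\mathcal F^{-1}(\phi_\alpha f)$ and using the elementary identity $\mathcal F(g(-\cdot))(t)=(\mathcal F g)(-t)=\phi_\alpha(-t)f(-t)$ produces the stated value $-\phi_\alpha(-t)f(-t)$, automatically supported on $(-\infty,0)$ since $f$ is supported on $(0,+\infty)$. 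This is \eqref{xx00}.

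The main obstacle is the bookkeeping in the second step: identifying $J_\mu^*$ with multiplication by $\phi_\alpha$ and then collapsing the variable weight $1/\phi_{\alpha,n}(s)$ on each block to the single constant $\omega_\alpha^2(-n-1)$. The latter reduction, which converts the integral description into the explicit norm in \eqref{jjmuhh2plus}, is the one place where the assumption that the radii $r_n$ decrease to $0$ is essential, since it is precisely the boundedness $v_k\le v_1$ that yields uniform control of $\mathrm e^{-2v_ks}$ on intervals of length $\alpha$.
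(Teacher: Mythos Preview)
Your argument is correct and in fact cleaner than the paper's. The paper proceeds block by block from the outset: it fixes the decomposition $H^2(\mathbb C_+)=\bigoplus_{n\ge 0}\mathcal L_{\alpha,n}$ with $\mathcal L_{\alpha,n}=\theta_\alpha^n\mathcal F^{-1}L^2(0,\alpha)$, computes $(\theta_\alpha^n\mathcal F^{-1}f,\theta_\alpha^m\mathcal F^{-1}g)_{L^2(\mu)}$ directly to verify orthogonality in $L^2(\mu)$ and obtain \eqref{normamun}, then identifies $J_\mu^*$ blockwise as $\theta_\alpha^n\mathcal F^{-1}f\mapsto\theta_\alpha^n\mathcal F^{-1}(\phi_{\alpha,n}f)$, and finally assembles \eqref{xx00} from \eqref{ffjjwwjj}. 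You instead observe immediately that the piecewise definition collapses to $\phi_\alpha(s)=\sum_k a_k\mathrm e^{-2v_ks}$ and use a single global Plancherel identity $\|h\|_{L^2(\mu)}^2=\int_0^\infty\phi_\alpha|\mathcal Fh|^2$, which gives \eqref{ffp2mu} at once and turns $J_\mu^*$ into a single multiplication operator; the block decomposition appears only where it is unavoidable, namely to translate the condition $\int|\psi|^2/\phi_\alpha<\infty$ into the discrete norm of \eqref{jjmuhh2plus}. Both routes rest on the same ingredients (Paley--Wiener, Plancherel, and the comparison $\phi_{\alpha,n}\asymp\omega_\alpha^{-2}(-n-1)$, which is the paper's \eqref{phiomega}); yours just recognises the global structure first and avoids the explicit orthogonality computation.
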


\begin{proof}
%\begin{equation}\label{jjmu} J_\mu^*(\oplus_{n=0}^\infty \theta_\alpha^n\mathcal F^{-1}f_n)=
%\oplus_{n=0}^\infty \theta_\alpha^n\mathcal F^{-1}\Bigl(\Bigl(\sum_{k=1}^\infty a_k \mathrm{e}^{-2n\alpha v_k}A_{\alpha,2v_k}\Bigr)f_n\Bigr), \ \ f_n\in L^2(0,\alpha),\end{equation}
%\begin{equation}\label{p2mu}\begin{aligned}\operatorname{clos}_{L^2(\mu)}H^2(\mathbb C_+) =
 %\{\oplus_{n=0}^\infty &\theta_\alpha^n\mathcal F^{-1}f_n \ :\ f_n\in L^2(0,\alpha), \\ & 
%\sum_{n=0}^\infty \| \theta_\alpha^n\mathcal F^{-1}f_n\|_{L^2(\mu)}^2<\infty\},\end{aligned}\end{equation}
%$\operatorname{clos}_{L^2(\mu)}H^2(\mathbb C_+)=\operatorname{clos}_{L^2(\mu)}\theta_\alpha^n\mathcal K_\alpha=\operatorname{clos}_{L^2(\mu)}\sum_{n=0}^\infty\theta_\alpha^n\mathcal F^{-1}L^2(0,\alpha).$$
Set $\mathcal L_{\alpha,n}=\theta_\alpha^n\mathcal F^{-1}L^2(0,\alpha)$, $n\geq 0$. By \eqref{basisalpha}, 
$$ H^2(\mathbb C_+)=\oplus_{n=0}^\infty \mathcal L_{\alpha,n} \ \  \text{ and } \  \ 
\|\theta_\alpha^n\mathcal F^{-1}f\|_{H^2(\mathbb C_+)}=\|f\|_{L^2(0,\alpha)}, \ \ \ f\in L^2(0,\alpha).$$

For $v>0$ define  $A_{\alpha,v}\in\mathcal L(L^2(0,\alpha))$ by the formula $(A_{\alpha,v}f)(t)=\mathrm{e}^{-vt}f(t)$, 
$f\in L^2(0,\alpha)$, $t\in(0,\alpha)$.
Then $(\mathcal F^{-1}f)(t+\mathrm{i}v)=(\mathcal F^{-1}A_{\alpha,v}f)(t)$,  $t\in(0,\alpha)$. 

\medskip

Let $n$, $m\geq 0$, and let $f$, $g\in L^2(0,\alpha)$. We have  
\begin{align*}(\theta_\alpha^n&\mathcal F^{-1}f, \theta_\alpha^m\mathcal F^{-1}g)_{L^2(\mu)}
\\ &=
\sum_{k=1}^\infty a_k 
\int_{\mathbb R}(\theta_\alpha^n\mathcal F^{-1}f)(t+\mathrm{i}v_k)\overline{(\theta_\alpha^m\mathcal F^{-1}g)(t+\mathrm{i}v_k)}{\mathrm d}t 
\\&=
\sum_{k=1}^\infty a_k 
\int_{\mathbb R}\mathrm{e}^{\mathrm{i}\alpha(t+\mathrm{i}v_k)n}\overline{\mathrm{e}^{\mathrm{i}\alpha(t+\mathrm{i}v_k)m}}(\mathcal F^{-1}A_{\alpha,v_k}f)(t)\overline{(\mathcal F^{-1}A_{\alpha,v_k}g)(t)}
{\mathrm d}t 
\\&=
\sum_{k=1}^\infty a_k \mathrm{e}^{-\alpha v_k(n+m)}\int_{\mathbb R}\mathrm{e}^{\mathrm{i}(n-m)\alpha t}(\mathcal F^{-1}A_{\alpha,v_k}f)(t)
\overline{(\mathcal F^{-1}A_{\alpha,v_k}g)(t)}
{\mathrm d}t \\&=
\sum_{k=1}^\infty a_k \mathrm{e}^{-\alpha v_k(n+m)}(\theta_\alpha^n\mathcal F^{-1}A_{\alpha,v_k}f,\theta_\alpha^m\mathcal F^{-1}A_{\alpha,v_k}g)_{L^2(\mathbb R)}.
\end{align*}

If  $n\neq m$, then $$(\theta_\alpha^n\mathcal F^{-1}A_{\alpha,v_k}f,\theta_\alpha^m\mathcal F^{-1}A_{\alpha,v_k}g)_{L^2(\mathbb R)}=0,$$ 
because $\mathcal F^{-1}L^2(0,\alpha)=\mathcal K_\alpha$ by \eqref{basisalpha}.
If $n=m$,  then
\begin{equation}\label{normamun}\begin{aligned}
(\theta_\alpha^n&\mathcal F^{-1}f, \theta_\alpha^n\mathcal F^{-1}g)_{L^2(\mu)} 
\\& =\sum_{k=1}^\infty a_k \mathrm{e}^{-2\alpha v_kn}
(\theta_\alpha^n\mathcal F^{-1}A_{\alpha,v_k}f,\theta_\alpha^n\mathcal F^{-1}A_{\alpha,v_k}g)_{L^2(\mathbb R)} 
\\ & =\sum_{k=1}^\infty a_k \mathrm{e}^{-2\alpha v_kn}(A_{\alpha,v_k}f,A_{\alpha,v_k}g)_{L^2(0,\alpha)} 
\\ & = \int_0^\alpha\sum_{k=1}^\infty a_k \mathrm{e}^{-2\alpha v_kn}\mathrm{e}^{-2 v_kt}f(t)\overline{g(t)}\mathrm{d}t
\\ & = \int_0^\alpha f(t)\overline{g(t)}\phi_{\alpha,n}(t)\mathrm{d}t.
\end{aligned}\end{equation}
Clearly, 
\begin{equation}\label{phiomega}
\frac{\mathrm{e}^{-2v_1\alpha}}{\omega^2_\alpha(-n-1)}\leq\frac{1}{\omega^2_\alpha(-n-2)}
 \leq\phi_{\alpha,n}(t)\leq\frac{1}{\omega^2_\alpha(-n-1)}, \ \ t\in(0,\alpha), 
\  n\geq 0.\end{equation}
It is proved that $\mathcal L_{\alpha,n}$ is orthogonal to $\mathcal L_{\alpha,m}$  for $n$, $m\geq 0$ and $n\neq m$, 
and $\mathcal L_{\alpha,n}$ is closed in $L^2(\mu)$ 
for every $n\geq 0$.  
Consequently, 
\begin{equation}\label{closll2mu}\operatorname{clos}_{L^2(\mu)}H^2(\mathbb C_+) = \{\oplus_{n=0}^\infty h_n \ :\ h_n\in \mathcal L_{\alpha,n}, \ \ 
\sum_{n=0}^\infty \| h_n\|_{L^2(\mu)}^2<\infty\}.\end{equation}
Let $\{f_n\}_{n=0}^\infty\subset L^2(0,\alpha)$. Set $h_n=\theta_\alpha^n\mathcal F^{-1}f_n$, $n\geq 0$,  and 
$$f(t)=f_n(t-n\alpha), \ \ \ t\in (n\alpha,(n+1)\alpha), \ \ n\geq 0 .$$
By \eqref{basisalphaf}, $\mathcal F(\oplus_{n=0}^\infty h_n)=f$. The relation \eqref{ffp2mu} follows from the latest equality, 
 \eqref{normamun},  \eqref{closll2mu} and the definition of $\phi_\alpha$. 

Let $J_{\mu,\alpha,n}$ be the natural imbedding of $\mathcal L_{\alpha,n}$ as a subspace of $H^2(\mathbb C_+)$ into  
$\mathcal L_{\alpha,n}$ as a subspace of $L^2(\mu)$. 
Since the spaces $\mathcal L_{\alpha,n}$, $n\geq 0$, are orthogonal and dense in both spaces $H^2(\mathbb C_+)$ and 
$\operatorname{clos}_{L^2(\mu)}H^2(\mathbb C_+)$, we conclude that 
$J_\mu= \oplus_{n=0}^\infty J_{\mu,\alpha,n}$. Consequently, $$J_\mu^*= \oplus_{n=0}^\infty J_{\mu,\alpha,n}^*.$$
Let $f$, $g\in L^2(0,\alpha)$. By  \eqref{normamun},  
\begin{align*}(J_{\mu,\alpha,n}^*\theta_\alpha^n&\mathcal F^{-1}f,\theta_\alpha^n\mathcal F^{-1}g)_{H^2(\mathbb C_+)}= (\theta_\alpha^n\mathcal F^{-1}f, \theta_\alpha^n\mathcal F^{-1}g)_{L^2(\mu)} 
\\ & = \int_0^\alpha f(t)\overline{g(t)}\phi_{\alpha,n}(t)\mathrm{d}t=(\phi_{\alpha,n}f,g)_{L^2(0,\alpha)}
\\ & = (\theta_\alpha^n\mathcal F^{-1}\phi_{\alpha,n}f,\theta_\alpha^n\mathcal F^{-1}g)_{H^2(\mathbb C_+)}.\end{align*}
Thus, \begin{equation}\label{jjstar}
J_{\mu,\alpha,n}^*\theta_\alpha^n\mathcal F^{-1}f = \theta_\alpha^n\mathcal F^{-1}\phi_{\alpha,n}f, \ \ \ f\in L^2(0,\alpha).
\end{equation}
By \eqref{jjstar} and \eqref{closll2mu},
\begin{align*}J_\mu^*\operatorname{clos}_{L^2(\mu)}H^2(\mathbb C_+) = 
\{\oplus_{n=0}^\infty \theta_\alpha^n\mathcal F^{-1}&\phi_{\alpha,n}f_n \ :\ f_n\in L^2(0,\alpha), \\&
 \sum_{n=0}^\infty \| \theta_\alpha^n\mathcal F^{-1}f_n\|_{L^2(\mu)}^2<\infty\}.\end{align*}
Set $g_n=\phi_{\alpha,n}f_n$, then 
\begin{align*}J_\mu^*\operatorname{clos}_{L^2(\mu)}H^2(\mathbb C_+) = 
\{\oplus_{n=0}^\infty \theta_\alpha^n\mathcal F^{-1}&g_n \ :\ g_n\in L^2(0,\alpha), 
\\& \sum_{n=0}^\infty\Bigl\| \theta_\alpha^n\mathcal F^{-1}\frac{g_n}{\phi_{\alpha,n}}\Bigr\|_{L^2(\mu)}^2<\infty\}.\end{align*}
By \eqref{normamun}, 
%\begin{align*} 
$$\Bigl\| \theta_\alpha^n\mathcal F^{-1}\frac{g_n}{\phi_{\alpha,n}}\Bigr\|_{L^2(\mu)}^2  
 = \int_0^\alpha \frac{|g_n(t)|^2}{\phi_{\alpha,n}(t)^2}\phi_{\alpha,n}(t){\mathrm d}t=
\int_0^\alpha \frac{|g_n(t)|^2}{\phi_{\alpha,n}(t)}{\mathrm d}t. $$
%\end{align*}
It follows from the latest equality and  \eqref{phiomega} that
 $$\sum_{n=0}^\infty \Bigl\| \theta_\alpha^n\mathcal F^{-1}\frac{g_n}{\phi_{\alpha,n}}\Bigr\|_{L^2(\mu)}^2<\infty 
\ \text{ if and only if } \ \sum_{n=0}^\infty \|g_n\|_{L^2(0,\alpha)}^2\omega^2_\alpha(-n-1)<\infty. $$
The equality \eqref{jjmuhh2plus} is proved. 

 Let $f\in L^2((0,+\infty),\phi_\alpha)$. Then $f=\oplus _{n=0}^\infty f|_{(n\alpha,(n+1)\alpha)}$. 
Set $$f_n(t)=f(t+n\alpha),  \ \ \ t\in(0,\alpha), \ \ n\geq 0.$$ 
  By  \eqref{basisalphaf} and \eqref{jjstar}, 
$$J_\mu^*\mathcal F^{-1}\!f|_{(n\alpha,(n+1)\alpha)}\! = J_\mu^*\theta_\alpha^n\mathcal F^{-1}\!f_n  
=\theta_\alpha^n\mathcal F^{-1}(\phi_{\alpha,n}f_n)=\mathcal F^{-1}\!\bigl(\!(\phi_\alpha f)|_{(n\alpha,(n+1)\alpha)}\!\bigr).$$
The equality \eqref{xx00} follows from \eqref{ffjjwwjj} and \eqref{jjwwjj}.
\end{proof}

\begin{remark} The idea of  Proposition \ref{prophhplus} is from \cite{fr1}, \cite{frankfurtrovnyak}.\end{remark}

\begin{theorem}\label{thmconv} Let $T$ be defined as in Proposition \ref{propttnu} with $\nu$ as in Lemma \ref{lemcircle}. 
Define  $\mu$  as in \eqref{mu}. For $\alpha>0$ set 
$$\widetilde\phi_\alpha\colon\!\mathbb R\!\to\!(0,+\infty), \ \  \ \widetilde\phi_\alpha(t)=\frac{1}{\phi_\alpha(-t)}, \ \ t\!\in\!(-\infty,0), 
\ \ \  \widetilde\phi_\alpha(t)=1,\ \ t\!\in\!(0,+\infty),$$
where $\phi_\alpha$ is defined as in Proposition \ref{prophhplus}.

 Let $\eta\in L^\infty$.  
Then $\eta\in\widehat\gamma_T(\{T\}')$ (where  $\widehat\gamma_T$ is defined in \eqref{gamma})
% $\eta$ belongs to the functional commutant of $T$
 if and only if  
 %$\mathcal C_{\mathcal F(\eta\circ\varpi)}$ is bounded  on $L^2(\mathbb R,\widetilde\phi_\alpha)$ (that is, 
$\mathcal C_{\mathcal F(\eta\circ\varpi)}\in\mathcal L(L^2(\mathbb R,\widetilde\phi_\alpha))$, 
 and then $\widehat\gamma_T^{-1}(\eta)$  is unitarily equivalent to $\frac{1}{\sqrt{2\pi}}\mathcal C_{\mathcal F(\eta\circ\varpi)}\in\mathcal L(L^2(\mathbb R,\widetilde\phi_\alpha))$.
\end{theorem}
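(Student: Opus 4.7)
The plan is to recognize $\widehat\gamma_T^{-1}(\eta)$, described via \eqref{gammarr} in Proposition~\ref{propttmain}, as the convolution $\tfrac{1}{\sqrt{2\pi}}\mathcal C_{\mathcal F(\eta\circ\varpi)}$ by pushing $(I_{H^2(\mathbb D)}\oplus X_0)$ through the conformal map $\mathcal J$ to the half-plane and then the Fourier transform $\mathcal F$. After conjugation by $\mathcal J$ we have $\mathcal JH^2(\mathbb D)=H^2(\mathbb C_+)$, $\mathcal JP^2(\nu)=\operatorname{clos}_{L^2(\mu)}H^2(\mathbb C_+)$ by \eqref{munu}, and $\mathcal JJ_\nu\mathcal J^{-1}=J_\mu$, so $\mathcal JX_0\mathcal J^{-1}=(\mathcal JW\mathcal J^{-1})J_\mu^*$ with $\mathcal JW\mathcal J^{-1}$ given by the reflection formula \eqref{jjwwjj}; meanwhile $\mathcal J\eta(U_{\mathbb T})\mathcal J^{-1}$ is multiplication by $\eta\circ\varpi$ on $L^2(\mathbb R)$.

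Applying $\mathcal F$, identifications \eqref{ffhhplus} and \eqref{ffp2mu} yield unitaries $H^2(\mathbb C_+)\cong L^2(0,+\infty)$ and $\operatorname{clos}_{L^2(\mu)}H^2(\mathbb C_+)\cong L^2((0,+\infty),\phi_\alpha)$; by \eqref{mmcc} the multiplication becomes $\tfrac{1}{\sqrt{2\pi}}\mathcal C_{\mathcal F(\eta\circ\varpi)}$; and \eqref{xx00} supplies the Fourier-side formula for $(\mathcal JW\mathcal J^{-1})J_\mu^*$. Combining these, the composite $\Theta:=\mathcal F\mathcal J(I_{H^2(\mathbb D)}\oplus X_0)$ acts as
\begin{equation*}
\Theta(h\oplus f)(t)=\begin{cases}(\mathcal F\mathcal Jh)(t), & t>0,\\ -\phi_\alpha(-t)(\mathcal F\mathcal Jf)(-t), & t<0.\end{cases}
\end{equation*}
Since $\widetilde\phi_\alpha\equiv 1$ on $(0,+\infty)$ and $\widetilde\phi_\alpha(t)=1/\phi_\alpha(-t)$ on $(-\infty,0)$, the substitution $s=-t$ in the integral over $(-\infty,0)$ gives $\|\Theta(h\oplus f)\|_{L^2(\mathbb R,\widetilde\phi_\alpha)}^2=\|h\|_{H^2(\mathbb D)}^2+\|f\|_{P^2(\nu)}^2$; moreover every $G\in L^2(\mathbb R,\widetilde\phi_\alpha)$ lies in the range of $\Theta$ (take $\mathcal F\mathcal Jh=G|_{(0,+\infty)}$ and recover $\mathcal F\mathcal Jf$ from $G|_{(-\infty,0)}$), so $\Theta$ is a unitary from $H^2(\mathbb D)\oplus P^2(\nu)$ onto $L^2(\mathbb R,\widetilde\phi_\alpha)$.

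By construction $\Theta$ intertwines $(I_{H^2(\mathbb D)}\oplus X_0)^{-1}\eta(U_{\mathbb T})(I_{H^2(\mathbb D)}\oplus X_0)$ with the restriction of $\tfrac{1}{\sqrt{2\pi}}\mathcal C_{\mathcal F(\eta\circ\varpi)}$ to the range of $\Theta$. Hence by Proposition~\ref{propttmain}, $\eta\in\widehat\gamma_T(\{T\}')$ if and only if this convolution is bounded on $L^2(\mathbb R,\widetilde\phi_\alpha)$, and in that case $\widehat\gamma_T^{-1}(\eta)$ is unitarily equivalent (via $\Theta$) to it. The main bookkeeping obstacle is pinning down the precise weight $\widetilde\phi_\alpha$: it emerges cleanly only once the reflection \eqref{jjwwjj} (which glues $\mathbb C_+$ and $\mathbb C_-$ into a single weighted space on $\mathbb R$) is combined with \eqref{xx00} and \eqref{ffp2mu}.
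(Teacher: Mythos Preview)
Your proof is correct and follows essentially the same route as the paper. The only cosmetic difference is packaging: the paper first conjugates by the unitary $\mathcal F\mathcal J$ to land in $L^2(0,+\infty)\oplus L^2((0,+\infty),\phi_\alpha)$, isolates the transferred version $Y_\alpha$ of $X_0$ (which acts by $(Y_\alpha f)(t)=-\phi_\alpha(-t)f(-t)$), and then introduces a second unitary $V_\alpha\colon L^2((-\infty,0),\widetilde\phi_\alpha)\to L^2((0,+\infty),\phi_\alpha)$ so that $Y_\alpha V_\alpha$ becomes the natural embedding into $L^2(-\infty,0)$; conjugating \eqref{gammarr} by $I\oplus V_\alpha^{-1}$ yields the convolution on $L^2(\mathbb R,\widetilde\phi_\alpha)$. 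You instead fold both steps into the single map $\Theta=\mathcal F\mathcal J(I_{H^2(\mathbb D)}\oplus X_0)$ and verify directly (via the same norm computation) that it is unitary onto $L^2(\mathbb R,\widetilde\phi_\alpha)$. The ingredients invoked---\eqref{jjwwjj}, \eqref{munu}, \eqref{mmcc}, \eqref{ffhhplus}, \eqref{ffp2mu}, \eqref{xx00}---and the logic are identical.
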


\begin{proof} By Proposition \ref{propttmain},  $\eta\in\widehat\gamma_T(\{T\}')$ 
%$\eta$ belongs to the functional commutant of $T$
 if and only if the mapping from \eqref{gammarr} is defined and  bounded.  By \eqref{jjunit}, \eqref{munu}, and \eqref{ffp2mu},   
$$ \mathcal F\mathcal J\colon H^2(\mathbb D)\oplus P^2(\nu)\to L^2(0,+\infty)\oplus L^2((0,+\infty),\phi_\alpha)\ \text{is unitary}.$$
 Set $$Y_\alpha=  (\mathcal F\mathcal J)_{H^2_-(\mathbb D)\to L^2(-\infty,0)}X_0
(\mathcal F\mathcal J)_{ L^2((0,+\infty),\phi_\alpha)\to P^2(\nu)}^{-1},$$
where lower indesis of $\mathcal F\mathcal J$ and $(\mathcal F\mathcal J)^{-1}$ show spaces between they act. 
Taking into account the definition of $X_0$, the equality
\begin{equation}\label{jjmunustar}J_\nu^*=\mathcal J^{-1}J_\mu^*\mathcal J,
\end{equation}
 applying  equalities \eqref{xx00} and \eqref{munu}, 
we obtain that $Y_\alpha$ acts by the formula 
\begin{align*}Y_\alpha\colon   L^2((0,+\infty),\phi_\alpha) \to  L^2(-\infty,0), 
\ \ & (Y_\alpha f)(t) = -\phi_\alpha(-t)f(-t), \\& 
t\in(-\infty,0), \ \ f\in  L^2((0,+\infty),\phi_\alpha).\end{align*}
By \eqref{mmcc},  the mapping from \eqref{gammarr} is defined and bounded if and only if the mapping  
\begin{equation}\label{gammarr1}(I_{ L^2(0,+\infty)}\oplus Y_\alpha^{-1})\frac{1}{\sqrt{2\pi}}\mathcal C_{\mathcal F(\eta\circ\varpi)}
(I_{ L^2(0,+\infty)}\oplus Y_\alpha)\end{equation} is defined and bounded.  

Define $V_\alpha\colon   L^2((-\infty,0),\widetilde\phi_\alpha) \to \colon   L^2((0,+\infty),\phi_\alpha)$ by the formula 
$$(V_\alpha f)(t) = -\frac{1}{\phi_\alpha(t)}f(-t), \ \ \ t\in(0,+\infty), \ \ f \in   L^2((-\infty,0),\widetilde\phi_\alpha).$$
Then $V_\alpha$ is unitary and $Y_\alpha V_\alpha$ is the natural imbedding of  $ L^2((-\infty,0),\widetilde\phi_\alpha) $ 
into $ L^2(-\infty,0)$. Thus, $I_{ L^2(0,+\infty)}\oplus Y_\alpha V_\alpha$ is the natural imbedding of  
$ L^2(\mathbb R, \widetilde\phi_\alpha)$ into $ L^2(\mathbb R)$. 
Multiplying the mapping from \eqref{gammarr1} by $I_{ L^2(0,+\infty)}\oplus V_\alpha^{-1}$ 
from the left side and by $I_{ L^2(0,+\infty)}\oplus V_\alpha$ from the right side, we obtain the conclusion of the theorem.
\end{proof}

\section{Properties of constructed weights}

Let $w,\phi\colon \mathbb R\to(0,+\infty)$ be two measurable functions. If $w\asymp \phi$, then the natural imbedding 
$$ L^2(\mathbb R,w)\to L^2(\mathbb R,\phi)$$ is a (bounded) transformation with the bounded inverse. Let $\alpha>0$. 
Let $\widetilde\phi_\alpha $ be defined in Theorem \ref{thmconv}, and 
let ${\omega^2_\alpha(-n-1)}$, $n\geq 0$, be defined by \eqref{omegan}. 
 We may assume that $\sum_{k=1}^\infty a_k\leq 1$.  Then  $\omega^2_\alpha(-n-1)\geq 1$ 
for $n\geq 0$. 
Set 
\begin{equation}\label{omegarr} \begin{aligned} w_\alpha(t) & = \omega^2_\alpha(-n-1), \ \ \ t\in(-(n+1)\alpha,-n\alpha),
 \ \ n\geq 0, \\  w_\alpha(t) & =1, \ \ t\in(0,+\infty).\end{aligned}\end{equation}
By \eqref{phiomega}, 
\begin{equation}\label{wphi}w_\alpha\asymp \widetilde\phi_\alpha. 
% \omega_\alpha(-n-1)^2\leq \widetilde\phi_\alpha \leq \mathrm{e}^{-2v_1 \alpha} \omega_\alpha(-n-1)^2 \text{ on } (-(n+1)\alpha,-n\alpha), \ \ n\geq 0,
\end{equation}
 Therefore, we can consider $ L^2(\mathbb R,w_\alpha)$ instead of $L^2(\mathbb R, \widetilde\phi_\alpha )$.

\begin{lemma}\label{lemsubmult}Let $\{a_k\}_{k=1}^\infty$ and $\{v_k\}_{k=1}^\infty$ be families of numbers such that $0<a_k\leq 1$, 
$0<v_{k+1}<v_k$ for every $k$,  and $\sum_{k=1}^\infty a_k<\infty$. For $\alpha>0$ define 
${\omega^2_\alpha(-n-1)}$, $n\geq 0$, by \eqref{omegan}.
Then 
$$ \omega^2_\alpha(-n-m-1)\leq\Bigl(1+2\sum_{k=1}^\infty a_k\Bigr)\omega^2_\alpha(-n-1)
\omega^2_\alpha(-m-1), \ \ \ m,n\geq 0.$$
\end{lemma}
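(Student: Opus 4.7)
The plan is to work with the reciprocals. Writing $S_n := 1/\omega_\alpha^2(-n-1) = \sum_{k=1}^\infty a_k q_k^n$ where $q_k := \mathrm{e}^{-2\alpha v_k}$, the claim is equivalent to the ``sub-multiplicativity" estimate
\[
S_n S_m \leq \Bigl(1 + 2\sum_{k=1}^\infty a_k\Bigr) S_{n+m}, \qquad n,m \geq 0,
\]
after which the lemma follows by taking reciprocals. Note that $q_k \in (0,1)$, and since $v_{k+1} < v_k$, the sequence $\{q_k\}$ is strictly \emph{increasing} in $k$, with $q_k \to 1$.

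The key step is to expand the product as a double sum and split it according to the relative order of the indices:
\[
S_n S_m = \sum_{k,l \geq 1} a_k a_l q_k^n q_l^m = \sum_{k<l} + \sum_{k=l} + \sum_{k>l}.
\]
For $k<l$ one has $q_k < q_l$, hence $q_k^n q_l^m \leq q_l^n q_l^m = q_l^{n+m}$, so that
\[
\sum_{k<l} a_k a_l q_k^n q_l^m \leq \sum_l a_l q_l^{n+m} \sum_{k<l} a_k \leq \Bigl(\sum_{k=1}^\infty a_k\Bigr) S_{n+m}.
\]
Symmetrically, for $k>l$ one has $q_l < q_k$, giving $q_k^n q_l^m \leq q_k^{n+m}$, and the same bound $(\sum_k a_k) S_{n+m}$ results. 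For the diagonal $k=l$, the assumption $a_k \leq 1$ yields $a_k^2 \leq a_k$, so
\[
\sum_{k=l} a_k^2 q_k^{n+m} \leq \sum_k a_k q_k^{n+m} = S_{n+m}.
\]
Summing the three contributions gives exactly $S_n S_m \leq (1 + 2\sum_k a_k) S_{n+m}$.

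There is no real obstacle here; the proof is essentially a bookkeeping exercise. The only delicate point is the interplay of the two monotonicities: the terms $q_k^n q_l^m$ are bounded by $q_{\max(k,l)}^{n+m}$ precisely because $q_k$ is increasing in $k$, and this is what makes the triangular splitting work. The normalization $\sum_k a_k \leq 1$ (stated just before the lemma) ensures $a_k \leq 1$, which is what allows the diagonal term to be absorbed; without this one would get an extra factor of $\sup_k a_k$ in front of the $S_{n+m}$ in the diagonal bound.
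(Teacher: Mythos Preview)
Your proof is correct and follows essentially the same approach as the paper: expand the product $S_nS_m$ as a double sum, split into the three regions $k<l$, $k=l$, $k>l$, bound each off-diagonal piece by replacing $q_{\min(k,l)}$ with $q_{\max(k,l)}$ (using the monotonicity of $q_k$), and use $a_k\le 1$ on the diagonal. A minor remark: the hypothesis $a_k\le 1$ is stated directly in the lemma, so you need not invoke $\sum_k a_k\le 1$ for it; also, $q_k\to 1$ is not assumed in this lemma and is not needed for the argument.
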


\begin{proof} We have 
\begin{align*}&\frac{1}{\omega^2_\alpha(-n-1)\omega^2_\alpha(-m-1)}=
\sum_{k=1}^\infty a_k \mathrm{e}^{-2\alpha nv_k}\sum_{l=1}^\infty a_l \mathrm{e}^{-2\alpha mv_l}
\\ &=
\sum_{k=1}^\infty a_k \mathrm{e}^{-2\alpha nv_k}\Bigl(\sum_{l=1}^{k-1} a_l \mathrm{e}^{-2\alpha mv_l}+ 
a_k \mathrm{e}^{-2\alpha mv_k}+\sum_{l=k+1}^\infty a_l \mathrm{e}^{-2\alpha mv_l}\Bigr)
\\ &=
\sum_{k=2}^\infty a_k \mathrm{e}^{-2\alpha nv_k}\sum_{l=1}^{k-1} a_l \mathrm{e}^{-2\alpha mv_l}+
\sum_{k=1}^\infty a_k^2 \mathrm{e}^{-2\alpha (n+m)v_k} 
\\ &\quad\quad\quad \quad\quad\quad\quad\quad\quad  + \sum_{k=2}^\infty a_k \mathrm{e}^{-2\alpha mv_k}\sum_{l=1}^{k-1} a_l \mathrm{e}^{-2\alpha nv_l}
\\&
\leq \sum_{k=2}^\infty a_k \mathrm{e}^{-2\alpha nv_k} \mathrm{e}^{-2\alpha mv_k}\sum_{l=1}^{k-1} a_l +
\sum_{k=1}^\infty a_k \mathrm{e}^{-2\alpha (n+m)v_k} 
\\ &\quad\quad\quad\quad\quad\quad \quad\quad\quad + \sum_{k=2}^\infty a_k \mathrm{e}^{-2\alpha mv_k}\mathrm{e}^{-2\alpha nv_k}\sum_{l=1}^{k-1} a_l 
\\&
\leq 2\sum_{l=1}^\infty a_l\sum_{k=2}^\infty a_k \mathrm{e}^{-2\alpha (n+m)v_k}+ \sum_{k=1}^\infty a_k \mathrm{e}^{-2\alpha (n+m)v_k}
\\&=\Bigl(1+2\sum_{k=1}^\infty a_k\Bigr)\frac{1}{\omega^2_\alpha(-n-m-1)}.\qedhere
\end{align*}
\end{proof}

\begin{corollary}\label{corsubmult}Let $\{a_k\}_{k=1}^\infty$ and $\{v_k\}_{k=1}^\infty$ be families of numbers such that $a_k>0$, 
$0<v_{k+1}<v_k$ for every $k$,  and $\sum_{k=1}^\infty a_k\leq 1$.  Set $$C=1+2\sum_{k=1}^\infty a_k.$$ For $\alpha>0$  
define $w_\alpha$ by \eqref{omegarr}.  Then 
$$ w_\alpha(t+s)\leq C^2\omega_\alpha(-2)^ 2 w_\alpha(t)w_\alpha(s), \ \ \ t,s\in\mathbb R.$$
\end{corollary}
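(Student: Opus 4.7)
The plan is to reduce the inequality to the integer-indexed submultiplicativity statement already proved in Lemma \ref{lemsubmult}, and then handle the sign patterns of $t$, $s$, $t+s$ by direct case analysis. It is convenient to write $W(n):=\omega^2_\alpha(-n-1)$ for $n\geq 0$, so that $w_\alpha(t)=W(n)$ when $t\in(-(n+1)\alpha,-n\alpha)$ and $w_\alpha(t)=1$ for $t>0$. Two elementary observations will be used throughout: first, the assumption $\sum_k a_k\leq 1$ gives $W(n)\geq 1$ for every $n\geq 0$; second, since each factor $\mathrm{e}^{-2\alpha nv_k}$ decreases in $n$, the sum in \eqref{omegan} decreases, so $W$ is non-decreasing. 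Lemma \ref{lemsubmult} translates to $W(n+m)\leq CW(n)W(m)$ for all $n,m\geq 0$.

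The case $t,s>0$ is trivial because all three values of $w_\alpha$ equal $1$. If $t>0$ and $s<0$ (or symmetrically), then $w_\alpha(s)=W(m)$ for the unique $m\geq 0$ with $s\in(-(m+1)\alpha,-m\alpha)$; if $t+s\geq 0$ then $w_\alpha(t+s)=1\leq W(m)$, while if $t+s<0$ then $t+s>s>-(m+1)\alpha$ forces $t+s\in(-(k+1)\alpha,-k\alpha)$ for some $k\leq m$, and monotonicity of $W$ gives $w_\alpha(t+s)=W(k)\leq W(m)=w_\alpha(t)w_\alpha(s)$. In each subcase the inequality holds with constant $1$, hence a fortiori with constant $C^2\omega_\alpha(-2)^2$.

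The main case is $t,s<0$. Write $t\in(-(n+1)\alpha,-n\alpha)$ and $s\in(-(m+1)\alpha,-m\alpha)$, so that $w_\alpha(t)=W(n)$ and $w_\alpha(s)=W(m)$; then $t+s\in(-(n+m+2)\alpha,-(n+m)\alpha)$, hence $t+s$ lies (up to a measure-zero boundary point) in $(-(k+1)\alpha,-k\alpha)$ with $k\in\{n+m,n+m+1\}$. When $k=n+m$, Lemma \ref{lemsubmult} gives
\[w_\alpha(t+s)=W(n+m)\leq CW(n)W(m),\]
and when $k=n+m+1$, two applications of the same lemma yield
\[W(n+m+1)\leq CW(n+m)W(1)\leq C^2W(1)W(n)W(m).\]
Since $W(1)=\omega^2_\alpha(-2)=\omega_\alpha(-2)^2$ and $C,W(1)\geq 1$, both subcases are dominated by $C^2\omega_\alpha(-2)^2w_\alpha(t)w_\alpha(s)$, which is exactly the stated bound.

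The only subtlety is the step $W(n+m+1)\leq CW(n+m)W(1)$, which requires knowing that the inequality of Lemma \ref{lemsubmult} is symmetric and usable with one of the indices equal to $1$; this is automatic from the form of Lemma \ref{lemsubmult}. No estimate is really hard here—the proof is bookkeeping together with the observation that the only way the submultiplicative constant can degrade compared to the integer case is the one-step shift between the intervals in which $t+s$ can land, which is absorbed by the extra factor $W(1)=\omega_\alpha(-2)^2$.
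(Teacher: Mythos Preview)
Your proof is correct and essentially matches the paper's argument: both reduce the main case $t,s<0$ to two applications of Lemma~\ref{lemsubmult} and handle the remaining sign patterns by the monotonicity of $w_\alpha$ together with $w_\alpha\equiv 1$ on $(0,+\infty)$. The only cosmetic difference is that the paper bounds $w_\alpha(t+s)$ directly by $\omega^2_\alpha(-n-m-2)=W(n+m+1)$ instead of splitting into $k=n+m$ and $k=n+m+1$, and then applies Lemma~\ref{lemsubmult} first with the pair $(n,m+1)$ and then with $(m,1)$; this yields the same constant $C^2\omega_\alpha(-2)^2$ with one less line.
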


\begin{proof} First, consider the case where $t,s<0$. Then there exists $m,n\geq 0$ such that $t\in(-(n+1)\alpha,-n\alpha)$ and 
$s\in(-(m+1)\alpha,-m\alpha)$. Then $t+s\in(-(n+m+2)\alpha,-(n+m)\alpha)$. Therefore, 
\begin{align*}w_\alpha(t+s)\leq \omega^2_\alpha(-n-m-2) & \leq C\omega^2_\alpha(-n-1)\omega^2_\alpha(-m-2) \\ 
\leq 
 C\omega^2_\alpha(-n-1)C\omega^2_\alpha(-m-1)\omega^2_\alpha(-2) &  =
 C^2\omega^2_\alpha(-2) w_\alpha(t)w_\alpha(s).\end{align*}
In remaining cases the conclusion of the lemma follows from the fact that $w_\alpha$ is nonincreasing and 
 $w_\alpha\equiv1$ on $(0,+\infty)$. 
\end{proof}

\begin{lemma}\label{lemexp} Let $\{a_k\}_{k=1}^\infty$ and $\{v_k\}_{k=1}^\infty$ be families of numbers such that $a_k>0$, 
$0<v_{k+1}<v_k$ for every $k$, $v_k\to 0$ and $\sum_{k=1}^\infty a_k<\infty$. 
For $\alpha>0$  define $\omega^2_\alpha(-n-1)$, $n\geq 0$, by \eqref{omegan}. 
Then for every $\varepsilon>0$ there exists a finite constant $C_\varepsilon$ (which also depends on $\alpha$) such that 
$\omega^2_\alpha(-n-1)\leq C_\varepsilon \mathrm{e}^{\varepsilon n}$ for all $n\geq 0$.
\end{lemma}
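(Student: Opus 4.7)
The plan is to give a one-term lower bound on the series defining $1/\omega^2_\alpha(-n-1)$, using that the weights $v_k$ tend to $0$. Recall from \eqref{omegan} that
\[
\frac{1}{\omega^2_\alpha(-n-1)} = \sum_{k=1}^\infty a_k \mathrm{e}^{-2\alpha n v_k},
\]
so the claim $\omega^2_\alpha(-n-1) \leq C_\varepsilon \mathrm{e}^{\varepsilon n}$ is equivalent to a lower bound
\[
\sum_{k=1}^\infty a_k \mathrm{e}^{-2\alpha n v_k} \geq \frac{1}{C_\varepsilon} \mathrm{e}^{-\varepsilon n}, \qquad n\geq 0.
\]

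Given $\varepsilon > 0$, the first step is to use the hypothesis $v_k \to 0$ to choose an index $k_0 = k_0(\varepsilon, \alpha)$ so that $2\alpha v_{k_0} \leq \varepsilon$. The second step is to drop all but one term from the sum of positive summands, which gives
\[
\sum_{k=1}^\infty a_k \mathrm{e}^{-2\alpha n v_k} \geq a_{k_0}\mathrm{e}^{-2\alpha v_{k_0} n} \geq a_{k_0}\mathrm{e}^{-\varepsilon n}.
\]
Taking reciprocals yields the conclusion with $C_\varepsilon = 1/a_{k_0}$.

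There is really no main obstacle here: the estimate is built directly from the hypothesis $v_k \to 0$, and the positivity of the $a_k$ makes term-dropping legal. The only subtlety worth noting is that $C_\varepsilon$ depends on $\alpha$ (via the choice of $k_0$), which is already acknowledged in the statement of the lemma. Neither the summability of $\{a_k\}$ nor the monotonicity of $\{v_k\}$ is needed for this particular argument, beyond the fact that positivity of the terms and $v_k \to 0$ are available.
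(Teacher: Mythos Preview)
Your proof is correct and follows essentially the same approach as the paper: both choose an index with $2\alpha v_{k}\le\varepsilon$ and lower-bound the defining series by discarding terms. The only cosmetic difference is that the paper retains the full tail $\sum_{k\ge k_\varepsilon}a_k\,\mathrm{e}^{-2\alpha n v_k}\ge\bigl(\sum_{k\ge k_\varepsilon}a_k\bigr)\mathrm{e}^{-\varepsilon n}$ (using the monotonicity of $\{v_k\}$ to bound every term in the tail), whereas you keep just the single term $a_{k_0}\mathrm{e}^{-\varepsilon n}$; as you observe, this single-term version avoids any appeal to monotonicity or summability of $\{a_k\}$.
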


\begin{proof} Since $v_k\to 0$, there exists $k_\varepsilon$ such that $2\alpha v_{k_\varepsilon}\leq\varepsilon$. 
We have $$\frac{1}{\omega^2_\alpha(-n-1)}=\sum_{k=1}^\infty a_k \mathrm{e}^{-2\alpha nv_k}\geq
 \sum_{k=k_\varepsilon}^\infty a_k \mathrm{e}^{-2\alpha nv_k}\geq\sum_{k=k_\varepsilon}^\infty a_k \mathrm{e}^{-\varepsilon n}.$$
Thus, $ C_\varepsilon =\frac{1}{\sum_{k=k_\varepsilon}^\infty a_k}.$
 \end{proof}

Recall that $\mathcal C_\varphi$ denote the convolution with a function $\varphi$, see \eqref{defconv}. 

\begin{lemma}\label{lemconvsubmult} Let $C>0$, and let $w\colon \mathbb R\to(0,+\infty)$  and $\psi\colon \mathbb R\to\mathbb C$ be  measurable
functions such that 
$w(t+s)\leq C w(t)w(s)$ for all $s,t\in\mathbb R$ and  
$\psi \sqrt w\in L^1(\mathbb R)$. 
Then $\mathcal C_\psi\in\mathcal L(L^2(\mathbb R, w))$ and $\|\mathcal C_\psi\|\leq \sqrt C\|\psi \sqrt w\|_{L^1(\mathbb R)}.$
\end{lemma}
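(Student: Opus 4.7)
The plan is to reduce the bound for $\mathcal C_\psi$ to the classical weighted Young-type argument using the submultiplicativity of $w$, combined with a weighted Cauchy--Schwarz. Assume $f\in L^2(\mathbb R,w)$ is given. I would first factor the integrand in $(\mathcal C_\psi f)(t)=\int f(t-s)\psi(s)\,\mathrm ds$ as $\bigl(|\psi(s)|^{1/2} w(s)^{-1/4}\bigr)\cdot\bigl(|f(t-s)|\,|\psi(s)|^{1/2}w(s)^{1/4}\bigr)$ and apply Cauchy--Schwarz in $s$. The first factor contributes $\int |\psi(s)|\,w(s)^{-1/2}\,\mathrm ds$, which is not the right constant, so I instead take the factorization $|\psi(s)|=\bigl(|\psi(s)|\sqrt{w(s)}\bigr)^{1/2}\cdot\bigl(|\psi(s)|/\sqrt{w(s)}\bigr)^{1/2}$ only on the side that is multiplied by $|f(t-s)|$; equivalently, apply Cauchy--Schwarz with the weight $h(s)=1/\sqrt{w(s)}$ to obtain
\begin{equation*}
|(\mathcal C_\psi f)(t)|^2\leq \Bigl(\int |\psi(s)|\sqrt{w(s)}\,\mathrm ds\Bigr)\cdot\int |f(t-s)|^2\,|\psi(s)|\,\frac{\mathrm ds}{\sqrt{w(s)}}.
\end{equation*}
The first factor is exactly $\|\psi\sqrt w\|_{L^1(\mathbb R)}$, which makes the choice of $h$ the natural one.

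Next I would multiply by $w(t)$, integrate in $t$, and apply Fubini. The inner integral is then
\begin{equation*}
\int |f(t-s)|^2\,|\psi(s)|\,\frac{w(t)}{\sqrt{w(s)}}\,\mathrm dt.
\end{equation*}
At this point the submultiplicativity hypothesis $w(t)=w((t-s)+s)\leq C\,w(t-s)w(s)$ is invoked to replace $w(t)/\sqrt{w(s)}$ by $C\,w(t-s)\sqrt{w(s)}$. A translation in $t$ then gives $\int|f(t-s)|^2w(t-s)\,\mathrm dt=\|f\|_{L^2(\mathbb R,w)}^2$, leaving a remaining factor of $C\,|\psi(s)|\sqrt{w(s)}$ in $s$. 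Integrating in $s$ produces another factor of $\|\psi\sqrt w\|_{L^1(\mathbb R)}$.

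Combining the two Cauchy--Schwarz factors yields
\begin{equation*}
\|\mathcal C_\psi f\|_{L^2(\mathbb R,w)}^2\leq C\,\|\psi\sqrt w\|_{L^1(\mathbb R)}^2\,\|f\|_{L^2(\mathbb R,w)}^2,
\end{equation*}
which is the asserted bound $\|\mathcal C_\psi\|\leq\sqrt C\,\|\psi\sqrt w\|_{L^1(\mathbb R)}$. There is no real obstacle; the only place to be careful is the correct pairing of the $\sqrt{w(s)}$ factors so that submultiplicativity converts $w(t)$ into $w(t-s)$ (which pairs with $|f(t-s)|^2$) and an extra $\sqrt{w(s)}$ (which combines with $|\psi(s)|$ to give the desired $L^1$ norm). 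The density of $\mathcal D(\mathbb R)$ in $L^2(\mathbb R,w)$ (and in $\mathcal S(\mathbb R)$) justifies extending the estimate from a dense class of Schwartz functions, in case one prefers to first define $\mathcal C_\psi$ on $\mathcal S(\mathbb R)$ as in \eqref{defconv}.
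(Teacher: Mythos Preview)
Your argument is correct: the Cauchy--Schwarz splitting
\[
|(\mathcal C_\psi f)(t)|^2\leq \|\psi\sqrt w\|_{L^1}\int |f(t-s)|^2\,|\psi(s)|\,w(s)^{-1/2}\,\mathrm ds,
\]
followed by multiplication by $w(t)$, Tonelli, the submultiplicativity bound $w(t)\leq C\,w(t-s)w(s)$, and a translation in $t$, yields exactly $\|\mathcal C_\psi f\|_{L^2(\mathbb R,w)}^2\leq C\,\|\psi\sqrt w\|_{L^1}^2\|f\|_{L^2(\mathbb R,w)}^2$.

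The paper takes a different route. It conjugates by the unitary $B\colon L^2(\mathbb R,w)\to L^2(\mathbb R)$, $Bf=\sqrt w\,f$, computes the kernel of $B\mathcal C_\psi B^{-1}$, and uses submultiplicativity to obtain the \emph{pointwise} domination
\[
|(B\mathcal C_\psi B^{-1}f)(t)|\leq \sqrt C\,\bigl(\mathcal C_{|\psi|\sqrt w}|f|\bigr)(t),
\]
after which the unweighted Young bound $\|\mathcal C_\varphi\|_{\mathcal L(L^2(\mathbb R))}\leq\|\varphi\|_{L^1}$ finishes the job. So the paper reduces to a known convolution estimate on the unweighted space, whereas you in effect reprove that estimate (in weighted form) via the standard Cauchy--Schwarz/Tonelli proof of Young's inequality. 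Your approach is more self-contained; the paper's is shorter once the classical bound is quoted, and its pointwise domination makes the a.e.\ absolute convergence of the defining integral transparent without any appeal to density.
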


\begin{proof} Define $ B\in\mathcal L(L^2(\mathbb R, w), L^2(\mathbb R))$ by the formula $Bf=\sqrt w f$, $f\in L^2(\mathbb R, w)$. 
Then $B$ is unitary, and $$(B \mathcal C_\psi B^{-1} f)(t)=\int_{\mathbb R}\psi(t-s)f(s)\frac{\sqrt {w(t)}}{\sqrt {w(s)}}{\mathrm d}s,
 \ \ t\in\mathbb R, \ \ f\in L^2(\mathbb R).$$
Therefore,
\begin{align*}|(B \mathcal C_\psi B^{-1} f)(t)|&\leq\int_{\mathbb R}|\psi(t-s)f(s)|\frac{\sqrt {w(t)}}{\sqrt {w(s)}}
{\mathrm d}s \\& \leq
\int_{\mathbb R}|\psi(t-s)f(s)|\frac{\sqrt{C w(t-s)w(s)}}{\sqrt {w(s)}}{\mathrm d}s\\ &= 
\sqrt C\int_{\mathbb R}|\psi(t-s)|\sqrt{w(t-s)}|f(s)|{\mathrm d}s \\& =\sqrt C\bigl(\mathcal C_{|\psi|\sqrt w} |f|\bigr)(t), 
\ \ t\in\mathbb R, \ \ f\in L^2(\mathbb R).
\end{align*}
It is well known (and can be deduced from \eqref{mmcc}) that if  $\varphi\in L^1(\mathbb R)$, then  
$\mathcal C_\varphi\in\mathcal L(L^2(\mathbb R))$ and 
$\|\mathcal C_\varphi\|\leq \|\varphi\|_{L^1(\mathbb R)}$. Setting $\varphi= |\psi|\sqrt w$, we obtain 
$$ \|B \mathcal C_\psi B^{-1} f\|_{L^2(\mathbb R)}\leq\sqrt C\||\psi|\sqrt w\|_{L^1(\mathbb R)}\| |f|\|_{L^2(\mathbb R)}.$$
Clearly, $\||\psi|\sqrt w\|_{L^1(\mathbb R)} = \|\psi\sqrt w\|_{L^1(\mathbb R)}$ and $\| |f|\|_{L^2(\mathbb R)}=\| f\|_{L^2(\mathbb R)}.$
The conclusion of the lemma follows from the  unitarity of $B$. 
 \end{proof}

Recall that $\varpi$ is defined by \eqref{kappa} and $\mathcal F$ is the Fourier transform (see \eqref{fourier}). 

\begin{lemma}\label{lemlogconv} Let $C>0$, and let $w\colon \mathbb R\to[1,+\infty)$   be a nonincreasing 
function such that $w(t+s)\leq C w(t)w(s)$ for all $s,t\in\mathbb R$ and 
\begin{equation}\label{wlogfinit}\int_{\mathbb R}\frac{\log w(t)}{1+t^2}<\infty.\end{equation}
Then there exists $\eta\in L^\infty(\mathbb T)$ such that $\eta(\mathrm{e}^{\mathrm{i}t})=0$ for $t\in(\pi,2\pi)$, 
$\eta\not\equiv 0$,  and 
$$\mathcal C_{\mathcal F(\eta\circ\varpi)}\in\mathcal L(L^2(\mathbb R,w)).$$ 
\end{lemma}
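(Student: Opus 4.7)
The plan is to produce $\eta$ as the pull-back through $\varpi$ of a function on $\mathbb{R}$ whose Fourier transform is an outer $H^2(\mathbb{C}_+)$-function with modulus tailored to $w$. By Lemma \ref{lemconvsubmult} and the submultiplicativity hypothesis, it suffices to exhibit $\eta \in L^\infty(\mathbb{T})$ with $\eta(\mathrm{e}^{\mathrm{i}s})=0$ for $s \in (\pi, 2\pi)$, $\eta \not\equiv 0$, and $\mathcal{F}(\eta \circ \varpi) \cdot \sqrt{w} \in L^1(\mathbb{R})$. Since $\varpi$ maps the positive (resp.\ negative) real axis onto the lower (resp.\ upper) arc of $\mathbb{T}$, the support condition translates into: the function $g := \eta \circ \varpi$ is an $L^\infty(\mathbb{R})$ function supported in $(-\infty,0]$.

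I would build $g$ as follows. Set $\phi(t)=1/((1+t^2)\sqrt{w(t)})$. The identity $-\log\phi(t)=\log(1+t^2)+\tfrac{1}{2}\log w(t)$, together with hypothesis \eqref{wlogfinit} and the elementary finiteness of $\int \log(1+t^2)/(1+t^2)\,\mathrm{d}t$, shows that $\log\phi\in L^1(\mathrm{d}t/(1+t^2))$. The classical Szegő/outer-function construction in the upper half-plane then produces an outer function $H\in H^2(\mathbb{C}_+)$ with $|H(t)|=\phi(t)$ a.e.\ on $\mathbb{R}$. Since $\phi\leq 1/(1+t^2)\leq 1$, this $H$ lies in $L^1(\mathbb{R})\cap L^\infty(\mathbb{R})$, and $H\not\equiv 0$ because outer functions do not vanish. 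By \eqref{ffhhplus}, $\mathcal{F}H$ is supported in $[0,+\infty)$, and because $H\in L^1(\mathbb{R})$, the function $\mathcal{F}H$ is bounded.

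Setting $g(t):=(\mathcal{F}H)(-t)$ then gives an $L^\infty(\mathbb{R})\cap L^2(\mathbb{R})$ function supported in $(-\infty,0]$ which is not identically zero. Defining $\eta$ on $\mathbb{T}$ by $\eta\circ\varpi=g$ (and arbitrarily at the point $1$) yields a nonzero $\eta\in L^\infty(\mathbb{T})$ vanishing on the lower arc. A direct change of variables under \eqref{fourier} gives
\[
\mathcal{F}g(t)=\frac{1}{\sqrt{2\pi}}\int_{\mathbb{R}}\mathrm{e}^{-\mathrm{i}ts}(\mathcal{F}H)(-s)\,\mathrm{d}s=\mathcal{F}^{-1}(\mathcal{F}H)(t)=H(t),
\]
so $\mathcal{F}(\eta\circ\varpi)=H$ and $|\mathcal{F}(\eta\circ\varpi)(t)|\sqrt{w(t)}=1/(1+t^2)\in L^1(\mathbb{R})$. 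Lemma \ref{lemconvsubmult}, applied with $\psi=H$, then yields $\mathcal{C}_{\mathcal{F}(\eta\circ\varpi)}\in\mathcal{L}(L^2(\mathbb{R},w))$.

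The main (and essentially only) obstacle is the construction of $H$; here the hypothesis \eqref{wlogfinit} enters in an essential way, being exactly the half-plane Szegő condition needed for an outer function with $|H|=1/((1+t^2)\sqrt{w})$ to exist. The interplay between the upper arc of $\mathbb{T}$ and the negative real axis under $\varpi$, together with the Paley--Wiener description \eqref{ffhhplus} of $H^2(\mathbb{C}_+)$, is what makes the reflected Fourier transform $g(t)=(\mathcal{F}H)(-t)$ automatically have the required support; the remaining steps are routine.
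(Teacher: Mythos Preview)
Your proof is correct and follows essentially the same route as the paper: construct an outer function $H\in H^1(\mathbb C_+)\cap H^2(\mathbb C_+)$ whose boundary modulus $|H|$ satisfies $|H|\sqrt{w}\in L^1(\mathbb R)$, apply Lemma~\ref{lemconvsubmult} with $\psi=H$, and define $\eta\circ\varpi$ as the inverse Fourier transform of $H$ (equivalently, your $g(t)=(\mathcal F H)(-t)$), using Paley--Wiener to get the support on the correct arc. The only difference is cosmetic: you take $|H|(t)=1/((1+t^2)\sqrt{w(t)})$ globally, while the paper takes $|H|(t)=1$ near the origin and $1/(|t|^{1+\varepsilon}\sqrt{w(t)})$ elsewhere; both choices satisfy the Szeg\H{o} condition thanks to \eqref{wlogfinit} and yield $|H|\sqrt{w}\in L^1$.
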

\begin{proof}
There exists a function $h\colon \mathbb R\to (0,+\infty)$ such that $h\in L^1(\mathbb R)\cap L^2(\mathbb R)$, 
$h\sqrt w\in L^1(\mathbb R)$ and 
\begin{equation}\label{hlogfinit}\int_{\mathbb R}\frac{\log h(t)}{1+t^2}>-\infty.\end{equation}
Indeed, take $c>0$, $\varepsilon>0$ and set 
$$h(t)=1, \ \ t\in(-c,c) \ \ \text{ and }\ \ h(t)=\frac{1}{|t|^{1+\varepsilon}\sqrt{w(t)}}, \ \  t\in(\infty,-c)\cup(c,+\infty).$$
Since $w$ is nonincreasing, $w$ is bounded on $(-c,c)$, therefore, $h\sqrt w\in L^1(\mathbb R)$. Since $w\geq 1$ on $\mathbb R$, 
$h\in L^1(\mathbb R)\cap L^2(\mathbb R)$. Furthermore,
 $$\int_{\mathbb R}\frac{\log h(t)}{1+t^2}=\Bigl(\int_{-\infty}^{-c}+\int_c^{+\infty}\Bigr)
 \Bigl(-\frac{(1+\varepsilon)\log |t|+\frac{1}{2}\log w(t)}{1+t^2}\Bigr)>-\infty$$ by \eqref{wlogfinit}.

By \eqref{hlogfinit}, there exists $\psi\in H^1(\mathbb C_+)\cap H^2(\mathbb C_+)$ such that $|\psi|=h$ a.e. on $\mathbb R$
(see, for example, {\cite[Theorem II.4.4]{garnett}}). By Lemma \ref{lemconvsubmult}, $\mathcal C_\psi\in\mathcal L(L^2(\mathbb R,w))$. 
Set $\eta=(\mathcal F^{-1}\psi)\circ\varpi^{-1}$. Since  $\psi\in H^2(\mathbb C_+)$ and 
$(\mathcal F^{-1}\psi)(-t)=(\mathcal F\psi)(t)$ for $t\in\mathbb R$, we have 
$(\mathcal F^{-1}\psi)\in L^2(-\infty,0)$ by \eqref{ffhhplus}. It remains to note that 
$\varpi\bigl((0,+\infty)\bigr)=\{\mathrm{e}^{\mathrm{i}t}\ :\ t\in(\pi,2\pi)\}$.
 \end{proof}

The following lemma will be applied in Sec. 9. 

\begin{lemma}\label{lemsupport}Suppose that $\delta>0$, and  $B\in\mathcal L(L^2(\mathbb R))$ is 
such that for every $-\infty<b_1<b_2<+\infty$  $$ B L^2(b_1,b_2)\subset L^2(b_1-\delta,b_2+\delta).$$
Suppose that a sequence $\{\omega(n)\}_{n\in\mathbb Z}$ of positive numbers is such that 
$$\frac{\omega(n)}{\omega(n+1)}\asymp 1, \ \ n\in\mathbb Z.$$
 For $\alpha>\delta$ set $w_\alpha(t)=\omega^2(n)$, 
$t\in(n\alpha,(n+1)\alpha)$, $n\in\mathbb Z$.  Then $B\in\mathcal L(L^2(\mathbb R, w_\alpha))$, and 
$$\|B\|_{\mathcal L(L^2(\mathbb R, w_\alpha))}^2\leq 3\|B\|_{\mathcal L(L^2(\mathbb R))}^2
\Bigl(1+\sup_{n\in\mathbb Z}\frac{\omega^2(n)}{\omega^2(n+1)} + 
\sup_{n\in\mathbb Z}\frac{\omega^2(n)}{\omega^2(n-1)}\Bigr) .$$
\end{lemma}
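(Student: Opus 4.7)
The plan is to exploit the natural block decomposition of $\mathbb R$ into the intervals $E_n:=(n\alpha,(n+1)\alpha)$, $n\in\mathbb Z$, on which the weight $w_\alpha$ is the constant $\omega^2(n)$. Any $f\in L^2(\mathbb R,w_\alpha)$ decomposes uniquely as $f=\sum_{n\in\mathbb Z} f_n$ with $f_n:=f|_{E_n}$, and
\[
\|f\|_{L^2(\mathbb R,w_\alpha)}^2=\sum_{n\in\mathbb Z}\omega^2(n)\|f_n\|_{L^2(\mathbb R)}^2.
\]

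The crucial geometric input is the assumption $\alpha>\delta$: combined with the support-expansion hypothesis on $B$, it yields $Bf_n\in L^2(n\alpha-\delta,(n+1)\alpha+\delta)$, and since $n\alpha-\delta>(n-1)\alpha$ and $(n+1)\alpha+\delta<(n+2)\alpha$, the image $Bf_n$ actually lies in $L^2(E_{n-1}\cup E_n\cup E_{n+1})$. Writing $(Bf)_k:=(Bf)|_{E_k}$, the restriction $(Bf)_k$ therefore receives contributions from only three terms of $\sum_n Bf_n$, namely those with $n\in\{k-1,k,k+1\}$.

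I would then combine the elementary inequality $(a+b+c)^2\leq 3(a^2+b^2+c^2)$ with the bound $\|Bf_n\|_{L^2(\mathbb R)}\leq\|B\|_{\mathcal L(L^2(\mathbb R))}\|f_n\|_{L^2(\mathbb R)}$ to estimate
\[
\|Bf\|_{L^2(\mathbb R,w_\alpha)}^2\leq 3\|B\|_{\mathcal L(L^2(\mathbb R))}^2\sum_{k\in\mathbb Z}\omega^2(k)\sum_{n:\,|n-k|\leq 1}\|f_n\|_{L^2(\mathbb R)}^2.
\]
Interchanging the order of summation collects the coefficient $\omega^2(n-1)+\omega^2(n)+\omega^2(n+1)$ next to $\|f_n\|_{L^2(\mathbb R)}^2$. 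Factoring out $\omega^2(n)$ and estimating the two side ratios by $\sup_{m}\omega^2(m)/\omega^2(m+1)$ and $\sup_{m}\omega^2(m)/\omega^2(m-1)$ (the first coming from the index shift $m=n-1$ in $\omega^2(n-1)/\omega^2(n)$, the second from $m=n+1$ in $\omega^2(n+1)/\omega^2(n)$) produces exactly the stated bound.

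The proof is essentially combinatorial once the block decomposition is in place; the only delicate point is bookkeeping the index shifts so that the two neighbour ratios land on the correct suprema in the final inequality. The hypothesis $\omega(n)/\omega(n+1)\asymp 1$ enters only to ensure that these suprema are finite, so that the resulting bound on $\|B\|_{\mathcal L(L^2(\mathbb R,w_\alpha))}$ is meaningful.
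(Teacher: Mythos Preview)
Your proposal is correct and follows essentially the same approach as the paper's proof: the same block decomposition into the intervals $(n\alpha,(n+1)\alpha)$, the same use of $\alpha>\delta$ to confine $Bf_n$ to three adjacent blocks, the same three-term Cauchy--Schwarz inequality $(a+b+c)^2\le 3(a^2+b^2+c^2)$, and the same reindexing to extract the two neighbour ratios. Your bookkeeping of the index shifts matches the paper's exactly.
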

\begin{proof}
Let $f\in L^2(\mathbb R, w_\alpha)$. Set $f_n=f|_{(n\alpha,(n+1)\alpha)}$, $n\in\mathbb Z$.  By assumption, 
$Bf_n$ is well-defined, and $Bf_n\in L^2((n-1)\alpha,(n+2)\alpha)$. 
We have 
\begin{align*}Bf &= \sum_{n\in\mathbb Z}(Bf_n|_{((n-1)\alpha,n\alpha)}+Bf_n|_{(n\alpha,(n+1)\alpha)}+
Bf_n|_{((n+1)\alpha, (n+2)\alpha)}) 
\\ & =
\oplus_{n\in\mathbb Z}(Bf_{n-1}+Bf_n + Bf_{n+1})|_{(n\alpha,(n+1)\alpha)}.\end{align*}
Furthermore,
\begin{align*}&\|(Bf_{n-1}+Bf_n + Bf_{n+1})|_{(n\alpha,(n+1)\alpha)}\|_{L^2(\mathbb R, w_\alpha)}^2 
\\ &=\omega^2(n)\|(Bf_{n-1}+Bf_n + Bf_{n+1})|_{(n\alpha,(n+1)\alpha)}\|_{L^2(\mathbb R)}^2 
\\ &\leq 
3\omega^2(n)\bigl(\|(Bf_{n-1})|_{(n\alpha,(n+1)\alpha)}\|_{L^2(\mathbb R)}^2+
\|(Bf_n)|_{(n\alpha,(n+1)\alpha)}\|_{L^2(\mathbb R)}^2
\\&\quad\quad\quad\quad\quad\quad \quad\quad\quad\quad\quad\quad\quad\quad +\|(Bf_{n+1})|_{(n\alpha,(n+1)\alpha)}\|_{L^2(\mathbb R)}^2\bigr)
\\ &\leq3\omega^2(n)\|B\|_{\mathcal L(L^2(\mathbb R))}^2\bigl(\|f_{n-1}\|_{L^2(\mathbb R)}^2 +
\|f_n\|_{L^2(\mathbb R)}^2+\|f_{n+1}\|_{L^2(\mathbb R)}^2\bigr) 
 \\ &= 
3\omega^2(n)\|B\|_{\mathcal L(L^2(\mathbb R))}^2\Bigl(\frac{1}{\omega^2(n-1)}\|f_{n-1}\|_{L^2(\mathbb R, w_\alpha)}^2  +
\frac{1}{\omega^2(n)}\|f_n\|_{L^2(\mathbb R, w_\alpha)}^2 
\\&\quad\quad\quad\quad\quad\quad \quad\quad\quad\quad\quad\quad\quad\quad +\frac{1}{\omega^2(n+1)}\|f_{n+1}\|_{L^2(\mathbb R, w_\alpha)}^2\Bigr)
\\ &=
3\|B\|_{\mathcal L(L^2(\mathbb R))}^2\Bigl(\frac{\omega^2(n)}{\omega^2(n-1)}\|f_{n-1}\|_{L^2(\mathbb R, w_\alpha)}^2  +
\|f_n\|_{L^2(\mathbb R, w_\alpha)}^2  
\\&\quad\quad\quad\quad\quad\quad \quad\quad\quad\quad\quad\quad\quad\quad+\frac{\omega^2(n)}{\omega^2(n+1)}\|f_{n+1}\|_{L^2(\mathbb R, w_\alpha)}^2\Bigr).\end{align*}
Therefore, \begin{align*}&\|Bf\|_{L^2(\mathbb R, w_\alpha)}^2
=\sum_{n\in\mathbb Z}\|(Bf_{n-1}+Bf_n + Bf_{n+1})|_{(n\alpha,(n+1)\alpha)}\|_{L^2(\mathbb R, w_\alpha)}^2
\\ &\leq 3\|B\|_{\mathcal L(L^2(\mathbb R))}^2 \sum_{n\in\mathbb Z}\Bigl(\frac{\omega^2(n)}{\omega^2(n-1)}\|f_{n-1}\|_{L^2(\mathbb R, w_\alpha)}^2  +
\|f_n\|_{L^2(\mathbb R, w_\alpha)}^2 
\\&\quad\quad\quad\quad\quad\quad \quad\quad\quad\quad\quad\quad\quad\quad+\frac{\omega^2(n)}{\omega^2(n+1)}\|f_{n+1}\|_{L^2(\mathbb R, w_\alpha)}^2\Bigr)
\\& =
 3\|B\|_{\mathcal L(L^2(\mathbb R))}^2 \sum_{n\in\mathbb Z}\Bigl(\frac{\omega^2(n+1)}{\omega^2(n)} +1   +
\frac{\omega^2(n-1)}{\omega^2(n)}\Bigr)\|f_{n}\|_{L^2(\mathbb R, w_\alpha)}^2
\\ &\leq
3\|B\|_{\mathcal L(L^2(\mathbb R))}^2\Bigl(1+\sup_{n\in\mathbb Z}\frac{\omega^2(n)}{\omega^2(n+1)} + 
\sup_{n\in\mathbb Z}\frac{\omega^2(n)}{\omega^2(n-1)}\Bigr)
\sum_{n\in\mathbb Z}\|f_{n}\|_{L^2(\mathbb R, w_\alpha)}^2
\\& =3\|B\|_{\mathcal L(L^2(\mathbb R))}^2\Bigl(1+\sup_{n\in\mathbb Z}\frac{\omega^2(n)}{\omega^2(n+1)} + 
\sup_{n\in\mathbb Z}\frac{\omega^2(n)}{\omega^2(n-1)}\Bigr)\|f\|_{L^2(\mathbb R, w_\alpha)}^2 \qedhere
\end{align*}
\end{proof}

\section{Quasianalyticity}

We will apply Beurling's quasianalyticity theorem, see {\cite[Ch. VII.B.5]{koosis}}.  

\begin{theorem}[Beurling]\label{thmbeurling}
%{\rm(Beurling)}
Let $-\infty<b_1<b_2<+\infty$. For $c>0$ set 
$$\mathcal G(c)=\{t+\mathrm{i}y\ :\ b_1<t<b_2, \ 0<y<c\}.$$ For a function $f$ analytic in $\mathcal G(c)$ set 
$$\varsigma_{\mathcal G(c)}(f)=\sup_{0<y<c}\Bigl(\int_{b_1}^{b_2}|f(t+\mathrm{i}y)|^2{\mathrm d}t\Bigr)^{1/2}.$$
For 
$\varphi\in L^2(b_1,b_2)$ and 
$u\in [1,+\infty)$ define $M(u)$ by the relation
\begin{align*} \mathrm{e}^{-M(u)}=\inf\Bigl\{\Bigl(\int_{b_1}^{b_2}|\varphi(t)-f(t)|^2{\mathrm d}t\Bigr)^{1/2} \ : 
&\ f \text{ is analytic in }\mathcal G(c) \\&
\text{ and }\  \varsigma_{\mathcal G(c)}(f)\leq \mathrm{e}^u\Bigr\}.\end{align*}
If $$\int_{1}^{+\infty} \frac{M(u)}{u^2}{\mathrm d}u=\infty $$ and 
$|\{t\in \mathbb R\ :\ b_1<t<b_2, \ \varphi(t)=0\}|>0$ 
(where $|\cdot|$ is the linear measure of a subset of $\mathbb R$), then $\varphi\equiv 0$.
\end{theorem}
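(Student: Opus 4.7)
The plan is to argue by contradiction. Suppose $\varphi \not\equiv 0$ yet $E = \{t \in (b_1, b_2) : \varphi(t) = 0\}$ has positive linear measure. I will form a Cauchy transform of $\varphi$, use the vanishing on $E$ to continue it analytically across the real axis, bound it pointwise near $E$ in terms of $M$, and then invoke Beurling's log-integral uniqueness theorem to force it---and hence $\varphi$---to vanish identically.

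Concretely, set
$$\Phi(z) = \frac{1}{2\pi\mathrm{i}}\int_{b_1}^{b_2}\frac{\varphi(t)}{t-z}\,\mathrm{d}t, \qquad z \in \mathbb{C}\setminus[b_1,b_2].$$
This is analytic off $[b_1,b_2]$ and has a Plemelj--Sokhotski jump across $(b_1,b_2)$ equal to $\varphi$. Since $\varphi|_E = 0$, $\Phi$ extends analytically across the interior points of $E$, yielding a single-valued analytic function on a domain $\Omega$ whose intersection with $\mathbb{R}$ contains the Lebesgue set of $E$. To estimate $\Phi$, fix $u \geq 1$ and pick $f_u$ analytic in $\mathcal{G}(c)$ with $\varsigma_{\mathcal{G}(c)}(f_u) \leq \mathrm{e}^u$ and $\|\varphi - f_u\|_{L^2(b_1,b_2)} \leq \mathrm{e}^{-M(u)}$. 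Decompose $\Phi$ into the Cauchy integral of $\varphi - f_u$, which by Cauchy--Schwarz is bounded by $C\,\mathrm{e}^{-M(u)}/\sqrt{\mathrm{dist}(z,[b_1,b_2])}$, plus the Cauchy integral of $f_u$, whose contour may be pushed into $\mathcal{G}(c)$ by analyticity of $f_u$, yielding a bound of order $\mathrm{e}^u$ on compact subsets of $\Omega$. Optimizing the choice of $u$ as a function of $\mathrm{Im}\,z$ produces an envelope of the form $\log|\Phi(x+\mathrm{i}y)| \leq -M^*(1/|y|)$ near Lebesgue points of $E$, where $M^*$ is essentially the Legendre conjugate of $M$.

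The hypothesis $\int_1^\infty M(u)/u^2\,\mathrm{d}u = \infty$ translates, under Legendre duality, into non-integrability of the logarithmic majorant of $\Phi$ along the normal approach to $E$. A two-constants / harmonic-measure argument on $\Omega$ (or a direct appeal to Beurling's classical uniqueness theorem for analytic functions in a strip) then forces $\Phi \equiv 0$ in $\Omega$; the jump relation yields $\varphi = 0$ a.e.\ on $(b_1,b_2)$, contradicting $\varphi \not\equiv 0$. The main obstacle is executing the Legendre-duality step cleanly and handling heights $|\mathrm{Im}\,z|$ comparable to $\mathrm{e}^{-M(u)}$, where the crude Cauchy--Schwarz control degrades; there one must appeal to a Phragm\'en--Lindel\"of estimate inside $\mathcal{G}(c)$ to obtain a log-majorant sharp enough for Beurling's theorem to apply.
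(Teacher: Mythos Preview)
The paper does not prove this theorem; it merely cites Koosis \cite[Ch.~VII.B.5]{koosis} and uses the result as a black box. So there is no in-paper argument to compare against---you are attempting to supply a proof the author deliberately omitted.

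Your sketch has a genuine gap at the analytic-continuation step. You assert that since $\varphi|_E=0$, the Cauchy transform $\Phi$ ``extends analytically across the interior points of $E$, yielding a single-valued analytic function on a domain $\Omega$ whose intersection with $\mathbb R$ contains the Lebesgue set of $E$.'' But the hypothesis is only that $E\subset(b_1,b_2)$ has positive Lebesgue measure; nothing prevents $E$ from being nowhere dense (a fat Cantor set, say), in which case $E$ has no interior points and your continuation yields nothing. The Plemelj jump relation does give $\Phi^+ = \Phi^-$ a.e.\ on $E$ in the sense of nontangential boundary values, but equality of boundary values on a merely measurable set does \emph{not} produce an analytic continuation or a single-valued function on any planar domain touching the real axis. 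Your later reference to ``the Lebesgue set of $E$'' does not repair this: density points carry no analytic-continuation content either. The difficulty you are trying to sidestep here---getting uniqueness from vanishing on a set of positive measure without any open subinterval---is exactly what Beurling's theorem is \emph{for}, so building the argument on an analytic continuation across $E$ is circular.

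The actual proof in Koosis does not pass through a Cauchy transform continued across $E$. It works directly with the approximants $f_u$: roughly, one uses the rate $\mathrm e^{-M(u)}$ together with the bound $\varsigma_{\mathcal G(c)}(f_u)\le\mathrm e^u$ to manufacture a subharmonic majorant (or, equivalently, a suitable outer/weight function) whose logarithmic integral over the boundary is controlled by $\int M(u)/u^2\,\mathrm du$, and then applies a Jensen-type inequality to conclude that $\varphi$ cannot vanish on a set of positive measure unless it vanishes identically. Your Legendre-duality and two-constants outline is in the right spirit for that endgame, but it needs a legitimately defined analytic or subharmonic object living on a fixed domain---one obtained from the $f_u$ themselves, not from an illusory continuation of $\Phi$.
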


To apply Theorem \ref{thmbeurling} we need the following simple lemma.   

\begin{lemma}\label{leminfty} Let $M\colon (0,+\infty)\to (0,+\infty)$ be a nondecreasing function. Let $\alpha>0$. Then 
 $$\sum_{n=1}^\infty \frac{M(n\alpha )}{n^2}=\infty \ \ \text{ if and only if } \ \  \int_{1}^{+\infty} \frac{M(u)}{u^2}{\mathrm d}u=\infty .$$
\end{lemma}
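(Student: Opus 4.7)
\medskip

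The statement is the standard integral test adapted to a step-size $\alpha$ and a nondecreasing (hence locally bounded and measurable) function $M$, so the plan is to compare the integral $\int_1^{+\infty}M(u)u^{-2}\,\mathrm{d}u$ with the series $\sum_{n\geq 1}M(n\alpha)n^{-2}$ via the standard Riemann-sum sandwich. First I would reduce to the integral over $[\alpha,+\infty)$ instead of $[1,+\infty)$: since $M$ is nondecreasing on $(0,+\infty)$ it is bounded on the compact interval between $\min(1,\alpha)$ and $\max(1,\alpha)$, so $\int_{\min(1,\alpha)}^{\max(1,\alpha)}M(u)u^{-2}\,\mathrm{d}u$ is finite and the convergence of $\int_1^{+\infty}$ is equivalent to the convergence of $\int_{\alpha}^{+\infty}$.

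Next I would split the interval $[\alpha,+\infty)=\bigcup_{n\geq 1}[n\alpha,(n+1)\alpha]$ and on each piece use monotonicity of $M$ together with the trivial bound $(n+1)^{-2}\leq u^{-2}\leq n^{-2}$ (up to the factor $\alpha^{-2}$). This yields the two-sided estimate
\begin{equation*}
\frac{M(n\alpha)}{\alpha(n+1)^2}\;\leq\;\int_{n\alpha}^{(n+1)\alpha}\frac{M(u)}{u^2}\,\mathrm{d}u\;\leq\;\frac{M((n+1)\alpha)}{\alpha\,n^2}.
\end{equation*}
Summing over $n\geq 1$ gives
\begin{equation*}
\sum_{n=1}^\infty\frac{M(n\alpha)}{\alpha(n+1)^2}\;\leq\;\int_\alpha^{+\infty}\frac{M(u)}{u^2}\,\mathrm{d}u\;\leq\;\sum_{n=1}^\infty\frac{M((n+1)\alpha)}{\alpha n^2}.
\end{equation*}

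Finally, I would observe that both flanking series diverge iff $\sum_{n\geq 1}M(n\alpha)n^{-2}$ does: on the left this is because $(n+1)^2\asymp n^2$, and on the right this is a reindexing $m=n+1$ combined with $(m-1)^2\asymp m^2$ for $m\geq 2$ (the single missing term $M(\alpha)$ is finite). Combining these equivalences yields the lemma. No step here should pose a real obstacle; the only thing to take mild care with is ensuring that the endpoints ($u$ near $0$, the discarded first term in the reindexing, and the interval between $1$ and $\alpha$) contribute only finite quantities, which follows from the fact that a nondecreasing positive function is locally bounded on $(0,+\infty)$.
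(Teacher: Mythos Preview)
Your proposal is correct and is essentially the paper's own argument: a Riemann-sum sandwich using monotonicity of $M$ on the partition into intervals of length $\alpha$, followed by the observation that $(n+1)^2\asymp n^2$. The only cosmetic difference is that the paper first substitutes $u\mapsto u\alpha$ (reducing to $\int_1^\infty M(u\alpha)u^{-2}\,\mathrm{d}u$ and the partition $[n,n+1]$) and then passes back via $\int_1^\infty M(u\alpha)u^{-2}\,\mathrm{d}u=\alpha\int_\alpha^\infty M(u)u^{-2}\,\mathrm{d}u$, whereas you work directly on $[\alpha,\infty)$ with the partition $[n\alpha,(n+1)\alpha]$; your explicit remark about the finite contribution between $\min(1,\alpha)$ and $\max(1,\alpha)$ makes the endpoint bookkeeping slightly more transparent than in the paper.
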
 

\begin{proof} We have $$\int_1^{+\infty} \frac{M(u\alpha)}{u^2}{\mathrm d}u=\sum_{n=1}^\infty\int_{n}^{n+1} \frac{M(u\alpha)}{u^2}{\mathrm d}u$$ and
$$\frac{M(n\alpha)}{(n+1)^2}\leq \frac{M(u\alpha)}{u^2} \leq \frac{M((n+1)\alpha)}{n^2} \ \ \text{ for } u\in[n,n+1].$$ 
Since $\frac{n}{n+1}\to 1$ when $n\to \infty$, we conclude that 
$$\sum_{n=1}^\infty \frac{M(n\alpha)}{n^2}=\infty \ \ \text{ if and only if } \ \  \int_1^{+\infty} \frac{M(u\alpha)}{u^2}{\mathrm d}u=\infty .$$
The lemma follows from the equality $$ \int_{1}^{+\infty} \frac{M(u\alpha)}{u^2}{\mathrm d}u=
\alpha \int_{\alpha}^{+\infty} \frac{M(u)}{u^2}{\mathrm d}u.    \qedhere$$
\end{proof}

\bigskip

\begin{theorem}\label{thmquasi} Let $T$ be defined as in Proposition \ref{propttnu} with $\nu$ as in Lemma \ref{lemcircle}. 
 For $\alpha>0$   define $\omega^2_\alpha(-n-1)$, $n\geq 0$, by \eqref{omegan}. 
Then $T$ is quasianalytic if and only if 
\begin{equation}\label{quasi}\sum_{n=0}^\infty \frac{\log \omega_\alpha(-n-1)}{(n+1)^2}=\infty.\end{equation}
 \end{theorem}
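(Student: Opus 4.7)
The plan is to translate quasianalyticity, via Proposition \ref{propttmain} and Theorem \ref{thmconv} (combined with $w_\alpha \asymp \widetilde\phi_\alpha$ from \eqref{wphi}), into a condition on the real line, then to invoke Beurling's Theorem \ref{thmbeurling} for the forward direction and Lemma \ref{lemlogconv} for the backward one. Under this transfer, \emph{$T$ fails to be quasianalytic} if and only if there exist a proper measurable $E \subset \mathbb R$ (with $|\mathbb R \setminus E| > 0$) and $\Psi \in L^2(\mathbb R)$ such that $g := \mathcal F^{-1}\Psi$ is supported in $E$ while $\Psi_- := \Psi|_{(-\infty, 0)}$ is a nonzero element of $L^2((-\infty, 0), w_\alpha)$.

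\textbf{Forward direction} (\eqref{quasi} $\Rightarrow$ quasianalytic): assume for contradiction that such $\Psi$, $E$ exist. Decompose $\Psi = \Psi_+ + \Psi_-$ and $g = g_+ + g_-$ with $g_+ \in H^2(\mathbb C_+)$. For each $N \geq 1$ the truncation
$$g_-^{(N)}(z) := \tfrac{1}{\sqrt{2\pi}} \int_{-N\alpha}^0 \Psi(s)\, \mathrm{e}^{\mathrm{i}zs}\, ds$$
is entire, so $g^{(N)} := g_+ + g_-^{(N)}$ is analytic in the upper half-plane. Plancherel gives
$$\bigl\|g_-^{(N)}(\cdot + \mathrm{i}y)\bigr\|_{L^2(\mathbb R)}^2 = \int_{-N\alpha}^0 |\Psi(s)|^2 \mathrm{e}^{-2ys}\, ds \leq \mathrm{e}^{2yN\alpha}\|\Psi_-\|_{L^2}^2,$$
hence $\varsigma_{\mathcal G(c)}(g^{(N)}) \leq \|\Psi_+\|_{L^2} + \mathrm{e}^{cN\alpha}\|\Psi_-\|_{L^2}$ for every rectangle $\mathcal G(c)$ and uniformly in $(b_1, b_2)$. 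Using $w_\alpha(s) \geq \omega^2_\alpha(-N-1)$ for $s \leq -N\alpha$,
$$\|g - g^{(N)}\|_{L^2(\mathbb R)} = \|\Psi_- \chi_{(-\infty, -N\alpha)}\|_{L^2} \leq \omega_\alpha(-N-1)^{-1}\|\Psi_-\|_{L^2(w_\alpha)}.$$
Choosing $N(u) = \lfloor(u - C_1)/(c\alpha)\rfloor$ to ensure $\varsigma_{\mathcal G(c)}(g^{(N)}) \leq \mathrm{e}^u$, Beurling's quantity for $\varphi := g|_{(b_1, b_2)}$ satisfies $M(u) \geq \log\omega_\alpha(-N(u) - 1) + O(1)$. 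The hypothesis \eqref{quasi} and Lemma \ref{leminfty} then give $\int_1^\infty M(u)/u^2\,du = \infty$. Fixing any $(b_1, b_2)$ with $|E^c \cap (b_1, b_2)| > 0$, so that $g$ vanishes there on a set of positive measure, Theorem \ref{thmbeurling} forces $g|_{(b_1, b_2)} \equiv 0$. Taking $(b_1, b_2) = (-R, R)$ as $R \to \infty$ yields $g \equiv 0$, contradicting $\Psi_- \neq 0$.

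\textbf{Backward direction} (negation of \eqref{quasi} $\Rightarrow$ not quasianalytic): the step-function $w_\alpha$ is nonincreasing, satisfies $w_\alpha \geq 1$ under the normalization $\sum a_k \leq 1$, is submultiplicative up to a constant (Corollary \ref{corsubmult}), and $\int_{\mathbb R} \log w_\alpha(t)/(1+t^2)\,dt$ converges because on $(-(n+1)\alpha, -n\alpha)$ its integrand contributes an amount $\asymp \log\omega_\alpha(-n-1)/(n+1)^2$. Lemma \ref{lemlogconv} then yields a nonzero $\eta \in L^\infty(\mathbb T)$ vanishing on $\{\mathrm{e}^{\mathrm{i}t}: t \in (\pi, 2\pi)\}$ with $\mathcal C_{\mathcal F(\eta \circ \varpi)} \in \mathcal L(L^2(\mathbb R, w_\alpha))$, which by Theorem \ref{thmconv} means $\eta \in \widehat\gamma_T(\{T\}')$. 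Since $T$ is of class $C_{1\cdot}$ with asymptote $U_{\mathbb T}$, the homomorphism $\widehat\gamma_T$ is injective, so $A := \widehat\gamma_T^{-1}(\eta) \in \{T\}' \setminus \{0\}$. The identity $XA = \eta(U_{\mathbb T})X$ forces $XA\mathcal H \subset L^2(\sigma)$ for $\sigma := \{\mathrm{e}^{\mathrm{i}t}: t \in [0, \pi]\}$, whence $\{0\} \neq A\mathcal H \subset X^{-1}L^2(\sigma)$ with $\mathbf m(\sigma) = 1/2 < 1$, violating \eqref{defquasi}.

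The main difficulty is in the forward direction: one must match the exponential growth rate $\mathrm{e}^{cN\alpha}$ of the entire approximants (forced by $\mathrm{e}^{\mathrm{i}zs}$ at $s < 0$, $\operatorname{Im} z > 0$) against the tail bound $\omega_\alpha(-N-1)^{-1}$ so that Beurling's $M(u)$ captures precisely \eqref{quasi} after the substitution $N \sim u/(c\alpha)$, while also verifying that the approximation estimates are uniform in $(b_1, b_2)$ so the conclusion propagates from one interval to all of $\mathbb R$.
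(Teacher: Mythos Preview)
Your proof is correct and follows essentially the same strategy as the paper's: in the forward direction you transfer via Proposition~\ref{propttmain} and the half-plane/Fourier machinery to the real line, truncate the negative-frequency part at $-N\alpha$ to produce analytic approximants, and feed the resulting growth and tail bounds into Beurling's Theorem~\ref{thmbeurling}; in the backward direction you invoke Lemma~\ref{lemlogconv} and Theorem~\ref{thmconv} exactly as the paper does. The only cosmetic differences are that you obtain the $\varsigma_{\mathcal G(c)}$ bound via a direct Plancherel computation (the paper estimates each block $f_{1k}$ pointwise and then sums, picking up an immaterial $(n+1)^{1/2}$ factor), and that you unpack the conclusion ``$\eta$ vanishing on a set of positive measure forces non-quasianalyticity'' by hand rather than citing \cite[Proposition~21]{ks14}.
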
 

\begin{proof}  \emph{``If" part.}  By Proposition \ref{propttmain}, $T$ is quasianalytic if and only if \eqref{quasidisk} 
is fulfilled. By the construction of $T$, $\mathcal H_0=P^2(\nu)$ and $X_0=WJ_\nu^*$, where $W$ is defined by \eqref{wwdef}. 
Recall that $\mathcal J$ is  defined by \eqref{jjkappa} and  $J_\mu$ is  defined after  \eqref{munu}. 
Applying $\mathcal J$ and taking into account \eqref{jjwwjj}, \eqref{munu},  and \eqref{jjmunustar}, 
the relation \eqref{quasidisk} can be rewritten as follows.
\begin{align*}\text{\emph{Let} } h\in H^2(\mathbb C_+), \ \text{ \emph{let} }& g\in J_\mu^*\operatorname{clos}_{L^2(\mu)}H^2(\mathbb C_+),
 \\\text{ \emph{ and let} }  &
|\{t\in \mathbb R\ :\ h(t)=g(-t)\}|>0. \ \  \text{ \emph{Then} } g\equiv 0. 
\end{align*}

We have $g\in H^2(\mathbb C_+)$. Set $g_*(z)=g(-z)$, $z\in\mathbb C_-$. Then  $g_*\in L^2(\mathbb R)\ominus H^2(\mathbb C_+)$. 
Therefore,  it sufficient to prove that $h(t)-g(-t)=0$ for a.e. $t\in \mathbb R$. 
 
By \eqref{jjmuhh2plus}, \begin{align*}g=\oplus_{n=0}^\infty \theta_\alpha^n\mathcal F^{-1}g_n, &\ \text{ where } g_n\in L^2(0,\alpha) 
\ \  \text{ and } 
\\& C_{1g}:=\sum_{n=0}^\infty \|g_n\|_{L^2(0,\alpha)}^2\omega^2_\alpha(-n-1)<\infty.\end{align*}
Since $\omega^2_\alpha(-n-1)\to\infty$ when $n\to\infty$, 
$$ C_{2g}:=\sum_{n=0}^\infty \|g_n\|_{L^2(0,\alpha)}^2<\infty.$$

Let $-\infty<b_1<b_2<+\infty$ be such that  
\begin{equation}\label{b1b2} |\{t\in (b_1,b_2)\ :\ h(t)=g(-t)\}|>0. \end{equation}
For $c>0$ define $\mathcal G(c)$ as in Theorem \ref{thmbeurling}. 

Set $\varphi(t)=h(t)-g(-t)$, $t\in (b_1,b_2)$, 
\begin{align*}f_{1n}(z)&=(\theta_\alpha^n\mathcal F^{-1}g_n)(-z) \ \text{ and }\\    
 f_n(z)& = h(z) - \oplus_{k=0}^n f_{1k}(z), \ z\in \mathbb C_+\cup \mathbb R, \ \ n\geq 0. \end{align*}
Clearly, $f_{1n}$ and $f_n$ are analytic in $\mathbb C_+$,   
$$f_{1n}(z) = \frac{1}{\sqrt{2\pi}} \mathrm{e}^{-\mathrm{i}\alpha n z}\int_0^\alpha \mathrm{e}^{-\mathrm{i} z s}g_n(s){\mathrm d}s, \ z\in \mathbb C_+,$$
and  
\begin{align*}2\pi(\varsigma_{\mathcal G(c)}(f_{1n}))^2 &=  
\sup_{0<y<c}\int_{b_1}^{b_2}|\mathrm{e}^{-\mathrm{i}(t+\mathrm{i}y)n\alpha } |^2
\Bigl|\int_0^\alpha \mathrm{e}^{-\mathrm{i} (t+\mathrm{i}y) s}g_n(s){\mathrm d}s\Bigr|^2{\mathrm d}t 
\\ &\leq \sup_{0<y<c}\int_{b_1}^{b_2}\mathrm{e}^{2yn\alpha }
\Bigl(\int_0^\alpha |\mathrm{e}^{-\mathrm{i}(t+\mathrm{i}y) s}g_n(s)|{\mathrm d}s\Bigr)^2{\mathrm d}t
\\ &\leq \mathrm{e}^{2cn\alpha }\sup_{0<y<c}
\int_{b_1}^{b_2}\Bigl(\int_0^\alpha  \mathrm{e}^{ys}|g_n(s)|{\mathrm d}s\Bigr)^2{\mathrm d}t
\\ &\leq
\mathrm{e}^{2c(n+1)\alpha}\sup_{0<y<c}\int_{b_1}^{b_2}\Bigl(\int_0^\alpha |g_n(s)|{\mathrm d}s\Bigr)^2{\mathrm d}t
\\& \leq
\mathrm{e}^{2c(n+1)\alpha}\int_{b_1}^{b_2}\alpha\Bigl(\int_0^\alpha |g_n(s)|^2{\mathrm d}s\Bigr){\mathrm d}t
\\ &=
\mathrm{e}^{2c(n+1)\alpha}(b_2-b_1)\alpha \|g_n\|_{L^2(0,\alpha)}^2.
\end{align*}
Therefore,
\begin{align*}&\varsigma_{\mathcal G(c)}(f_n)\leq \varsigma_{\mathcal G(c)}(h) + \sum_{k=0}^n\varsigma_{\mathcal G(c)}(f_{1k}) 
\\ &\leq
\varsigma_{\mathcal G(c)}(h)+ (b_2-b_1)^{1/2}\frac{\sqrt\alpha}{\sqrt{2\pi}}\sum_{k=0}^n\mathrm{e}^{c(k+1)\alpha}\|g_k\|_{L^2(0,\alpha)}
 \\ &\leq
\varsigma_{\mathcal G(c)}(h) + (b_2-b_1)^{1/2}\frac{\sqrt\alpha}{\sqrt{2\pi}}
\Bigl(\sum_{k=0}^n\mathrm{e}^{2c(k+1)\alpha}\Bigr)^{1/2}
\Bigl(\sum_{k=0}^n\|g_k\|_{L^2(0,\alpha)}^2\Bigr)^{1/2}
\\& \leq
\varsigma_{\mathcal G(c)}(h)  + (b_2-b_1)^{1/2}\frac{\sqrt\alpha}{\sqrt{2\pi}}
(n+1)^{1/2}\mathrm{e}^{c(n+1)\alpha}C_{2g}^{1/2}.
\end{align*}
Set $C_1=(b_2-b_1)^{1/2}\frac{\sqrt\alpha}{\sqrt{2\pi}}$.  Since $h\in H^2(\mathbb C_+)$, we have 
$\varsigma_{\mathcal G(c)}(h)\leq \|h\|_{H^2(\mathbb C_+)}$. 
Thus, $$\varsigma_{\mathcal G(c)}(f_n)\leq \|h\|_{H^2(\mathbb C_+)} +
C_1C_{2g}^{1/2}(n+1)^{1/2}\mathrm{e}^{c(n+1)\alpha}, \ \ \ n\geq 0.$$
Take $0<c<1/\alpha$. Then there exists $C_2$ (which depends on $c$) such that 
$$\|h\|_{H^2(\mathbb C_+)} +
C_1C_{2g}^{1/2}(n+1)^{1/2}\mathrm{e}^{c(n+1)\alpha}\leq C_2 \mathrm{e}^n \ \ \text{ for all } n\in\mathbb N.$$
 We obtain that $$ \varsigma_{\mathcal G(c)}\Bigl(\frac{1}{C_2}f_n\Bigr)\leq \mathrm{e}^n \ \text{for all } n\in\mathbb N.$$

We have \begin{align*}\Bigl(\int_{b_1}^{b_2}|\varphi(t)-f_n(t)|^2{\mathrm d}t\Bigr)^{1/2}
&\leq \|\varphi-f_n\|_{L^2(\mathbb R)}
=\|\oplus_{k=n+1}^\infty f_{1k}\|_{L^2(\mathbb R)}
\\ =\Bigl(\sum_{k=n+1}^\infty\|g_k\|_{L^2(0,\alpha)}^2\Bigr)^{1/2}
&\leq
\frac{1}{\omega_\alpha(-n-2)}\Bigl(\sum_{k=n+1}^\infty 
\omega^2_\alpha(-k-1)\|g_k\|_{L^2(0,\alpha)}^2\Bigr)^{1/2}\\&\leq \frac{1}{\omega_\alpha(-n-2)}C_{1g}\end{align*}
(because $ \omega_\alpha(-k-1)\geq \omega_\alpha(-n-2)$ for $k\geq n+1$). 

Define $M(n)$ as in Theorem \ref{thmbeurling} applying to $\frac{1}{C_2}\varphi$. Then
$$\mathrm{e}^{-M(n)}\leq \Bigl(\int_{b_1}^{b_2}\Bigl|\frac{1}{C_2}\varphi(t)-\frac{1}{C_2}f_n(t)\Bigr|^2{\mathrm d}t\Bigr)^{1/2}
\leq \frac{1}{\omega_\alpha(-n-2)}C_{1g}.$$
Therefore, $M(n)\geq \log \omega_\alpha(-n-2) -\log C_{1g}$. By  assumption \eqref{quasi},  $$\sum_{n=1}^\infty \frac{M(n)}{n^2}=\infty.$$
By Lemma \ref{leminfty},  $\frac{1}{C_2}\varphi$ satisfies the conclusion of Theorem \ref{thmbeurling}, that is, $\frac{1}{C_2}\varphi\equiv 0$. 
Therefore,  $h(t)-g(-t)= 0$ for a.e.  $t\in (b_1,b_2)$. 

Since $(b_1,b_2)$ is an arbitrary interval satisfying \eqref{b1b2}, we conclude that $h(t)-g(-t)= 0$ for a.e. $t\in \mathbb R$.  

\emph{``Only if" part.} Take $\alpha>0$. Define $w_\alpha$ by \eqref{omegarr}. If the sum in \eqref{quasi} is finite, 
then \eqref{wlogfinit} is fulfilled for $w_\alpha$.   
By Lemma \ref{lemlogconv}, there exists  $\eta\in L^\infty(\mathbb T)$ such that $\eta(\mathrm{e}^{\mathrm{i}t})=0$ for $t\in(\pi,2\pi)$, $\eta\not\equiv 0$,  and 
$\mathcal C_{\mathcal F(\eta\circ\varpi)}\in\mathcal L(L^2(\mathbb R,w_\alpha))$. By \eqref{wphi}, 
$\mathcal C_{\mathcal F(\eta\circ\varpi)}\in\mathcal L(L^2(\mathbb R, \widetilde\phi_\alpha ))$, too.
By Theorem \ref{thmconv},  $\eta\in\widehat\gamma_T(\{T\}')$ (where  $\widehat\gamma_T$ is defined in \eqref{gamma}). 
%$\eta$ belongs to the functional commutant of $T$.
 Since $\eta\not\equiv 0$ and 
$\eta=0$ on the set of positive measure, $T$ is not quasianalytic by {\cite[Proposition 21]{ks14}}.
%Let $\mathcal G$ be a rectangular domain whose base is an interval of $\mathbb R$ $c>0$, and 
\end{proof}

\begin{lemma}\label{lemquasi} Let $\{a_k\}_{k=1}^\infty$ and $\{v_k\}_{k=1}^\infty$ be families of numbers such that $a_k>0$, 
$0<v_{k+1}<v_k$ for every $k$,  and $\sum_{k=1}^\infty a_k<\infty$. Let $\{k_n\}_{n=1}^\infty$ be a subsequence such that 
$$\sum_{n=1}^\infty\frac{v_{k_n}}{n}=\infty,$$ and let  $\{c_n\}_{n=1}^\infty$ be defined by the equality
$$\sum_{k=k_n+1}^\infty a_k=\mathrm{e}^{-nc_n}, \ \ \ n\geq 1.$$
For $\alpha>0$  define ${\omega^2_\alpha(-n-1)}$, $n\geq 0$, by \eqref{omegan}. If $2\alpha v_{k_n}\leq c_n$ for sufficiently large $n$, 
then \eqref{quasi} is fulfilled. 
\end{lemma}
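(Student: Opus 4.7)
The plan is to obtain a lower bound on $\log\omega_\alpha(-n-1)$ of the form $\alpha n v_{k_n} - (\text{const})$ and then invoke the divergence hypothesis $\sum v_{k_n}/n = \infty$ to conclude \eqref{quasi}, so that Theorem \ref{thmquasi} applies. The whole argument is a direct estimation of the series defining $\omega_\alpha(-n-1)^{-2}$ from \eqref{omegan}.

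First I would split the defining sum at the index $k_n$:
\[
\frac{1}{\omega_\alpha^2(-n-1)}=\sum_{k=1}^{k_n}a_k\mathrm{e}^{-2\alpha n v_k}+\sum_{k=k_n+1}^\infty a_k\mathrm{e}^{-2\alpha n v_k}.
\]
For the first (head) piece, use that $\{v_k\}$ is decreasing, so $v_k\geq v_{k_n}$ for $k\leq k_n$, giving the bound $\bigl(\sum_{k=1}^\infty a_k\bigr)\mathrm{e}^{-2\alpha n v_{k_n}}$. For the second (tail) piece, use the trivial bound $\mathrm{e}^{-2\alpha n v_k}\leq 1$ together with the definition of $c_n$ to get $\sum_{k=k_n+1}^\infty a_k = \mathrm{e}^{-n c_n}$. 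The hypothesis $2\alpha v_{k_n}\leq c_n$ (for $n\geq n_0$) then yields $\mathrm{e}^{-n c_n}\leq\mathrm{e}^{-2\alpha n v_{k_n}}$, so both pieces are dominated by a common exponential:
\[
\frac{1}{\omega_\alpha^2(-n-1)}\leq C\,\mathrm{e}^{-2\alpha n v_{k_n}},\qquad n\geq n_0,
\]
with $C:=1+\sum_{k=1}^\infty a_k$.

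Taking logarithms gives $\log\omega_\alpha(-n-1)\geq \alpha n v_{k_n}-\tfrac{1}{2}\log C$ for $n\geq n_0$. Dividing by $(n+1)^2$ and summing,
\[
\sum_{n=n_0}^\infty\frac{\log\omega_\alpha(-n-1)}{(n+1)^2}\geq \alpha\sum_{n=n_0}^\infty\frac{n v_{k_n}}{(n+1)^2}-\frac{\log C}{2}\sum_{n=n_0}^\infty\frac{1}{(n+1)^2}.
\]
The second sum on the right is finite, while $\frac{n}{(n+1)^2}\asymp\frac{1}{n}$, so the first sum diverges by the hypothesis $\sum_{n\geq 1} v_{k_n}/n=\infty$. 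This establishes \eqref{quasi}.

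The argument is essentially routine once one sees the splitting; there is no real obstacle. The only thing to watch is the interplay between the two bounds on the head and tail—one needs the condition $2\alpha v_{k_n}\leq c_n$ precisely so that the exponential rate $\mathrm{e}^{-2\alpha n v_{k_n}}$ from the head part beats (or matches) the tail rate $\mathrm{e}^{-n c_n}$, and this is exactly the hypothesis supplied.
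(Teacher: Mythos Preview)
Your proof is correct and follows essentially the same approach as the paper: the same splitting of the defining series at $k_n$, the same bounds on head and tail, and the same use of the hypothesis $2\alpha v_{k_n}\leq c_n$ to compare the two exponentials. The only cosmetic difference is that the paper factors the combined bound as $a\,\mathrm{e}^{-2\alpha n v_{k_n}}\bigl(1+\tfrac{1}{a}\mathrm{e}^{-n(c_n-2\alpha v_{k_n})}\bigr)$ and then applies $\log(1+x)\leq x$, carrying an extra summable error term, whereas you absorb the tail directly into the constant $C=1+\sum a_k$; your route is slightly cleaner.
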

\begin{proof} Set $a=\sum_{k=1}^\infty a_k$. We have 
\begin{align*}\frac{1}{\omega^2_\alpha(-n-1)}&=\sum_{k=1}^\infty a_k \mathrm{e}^{-2\alpha nv_k}= 
\sum_{k=1}^{k_n} a_k \mathrm{e}^{-2\alpha nv_k} + \sum_{k=k_n+1}^\infty a_k \mathrm{e}^{-2\alpha nv_k} \\ &\leq 
 \sum_{k=1}^{k_n} a_k \mathrm{e}^{-2\alpha nv_{k_n}} + \sum_{k=k_n+1}^\infty a_k 
\leq a\mathrm{e}^{-2\alpha nv_{k_n}} + \mathrm{e}^{-nc_n} \\&= a \mathrm{e}^{-2\alpha nv_{k_n}}\Bigl(1+ \frac{1}{a}\mathrm{e}^{-n(c_n-2\alpha v_{k_n})}\Bigr).
\end{align*}
Therefore, 
$$ \log \frac{1}{\omega^2_\alpha(-n-1)} \leq 
\log a -2\alpha nv_{k_n} + \frac{1}{a}\mathrm{e}^{-n(c_n-2\alpha v_{k_n})}.$$
Consequently, $$2\!\sum_{n=0}^\infty \frac{\log\omega_\alpha(-n-1)}{(n+1)^2}\geq - 
\!\sum_{n=0}^\infty\frac{\log a}{(n+1)^2} 
+2\alpha\!\sum_{n=0}^\infty\frac{nv_{k_n}}{(n+1)^2} -
\frac{1}{a}\!\sum_{n=0}^\infty\frac{\mathrm{e}^{-n(c_n-2\alpha v_{k_n})}}{(n+1)^2}.$$
By assumption, $\sum_{n=0}^\infty\frac{nv_{k_n}}{(n+1)^2} = \infty$ and
 $\sum_{n=0}^\infty\frac{\mathrm{e}^{-n(c_n-2\alpha v_{k_n})}}{(n+1)^2}<\infty$.
\end{proof}

\begin{example} Let $0<a<1$. Set $a_n=(1-a)a^{n-1}$ and $v_n=\frac{1}{\log(n+1)}$,  $n\geq 1$. 
Then $\{a_n\}_{n=1}^\infty$ and $\{v_n\}_{n=1}^\infty$ satisfy  the assumption of Lemma \ref{lemquasi}
 with $k_n=n$ and $c_n=-\log a$, $n\geq 1$.
\end{example}

\section{Existence of hyperinvariant subspaces}

Recall the definition of  a bilateral weighted shift, see \cite{est}.
Let $\omega\colon\mathbb Z \to (0,\infty)$ be a nonincreasing  function.
Set $$\ell^2_\omega  =\big \{u=\{u(n)\}_{n\in\mathbb Z}:\ \ 
\|u\|_\omega^2=\sum_{n\in\mathbb Z}|u(n)|^2\omega^2(n)<\infty\big\}.$$
The \emph{bilateral weighted shift} $S_\omega\in\mathcal L(\ell^2_\omega)$ acts by the formula 
$$(S_\omega u)(n)=u(n-1), \ \ n\in\mathbb Z,  \ \ u\in\ell^2_\omega.$$ 

\begin{theorem}\label{thmsimshift}Let $T$ be defined as in Proposition \ref{propttnu} with $\nu$ as in Lemma \ref{lemcircle}. 
For $\alpha>0$ set $$\vartheta_\alpha(z)=\mathrm{e}^{\alpha \frac{z+1}{z-1}}, \ \ \ z\in\mathbb D.$$
Set $\omega_\alpha(n)=1$, $n\geq 0$, and define $\omega_\alpha(n)$ for $n\leq -1$ by \eqref{omegan}. Then
$\vartheta_\alpha(T)$ is similar to $\oplus_{j\in\mathbb N}S_{\omega_\alpha}$.
\end{theorem}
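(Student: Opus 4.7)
The plan is to identify $\vartheta_\alpha(T)$ explicitly via the model provided by Theorem \ref{thmconv} and then recognize the resulting operator as the requested direct sum of bilateral weighted shifts.

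First, since $T$ admits an $H^\infty$-functional calculus and $\vartheta_\alpha\in H^\infty(\mathbb D)$, the operator $\vartheta_\alpha(T)$ lies in $\{T\}'$, and $\widehat\gamma_T(\vartheta_\alpha(T))$ equals the boundary function $\vartheta_\alpha|_{\mathbb T}\in L^\infty(\mathbb T)$. A direct computation using $\varpi(z)=(z-\mathrm{i})/(z+\mathrm{i})$ gives $(\varpi(z)+1)/(\varpi(z)-1)=\mathrm{i}z$, hence $\vartheta_\alpha\circ\varpi(z)=\mathrm{e}^{\mathrm{i}\alpha z}=\theta_\alpha(z)$. Applying Theorem \ref{thmconv} with $\eta=\vartheta_\alpha|_{\mathbb T}$, we conclude that $\vartheta_\alpha(T)$ is unitarily equivalent to $\frac{1}{\sqrt{2\pi}}\mathcal C_{\mathcal F\theta_\alpha}$ acting on $L^2(\mathbb R,\widetilde\phi_\alpha)$; by \eqref{shiftalpha} this convolution operator is precisely the shift $(S_\alpha f)(t)=f(t-\alpha)$.

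By \eqref{wphi}, $w_\alpha\asymp\widetilde\phi_\alpha$, so the identity map furnishes an invertible transformation $L^2(\mathbb R,\widetilde\phi_\alpha)\to L^2(\mathbb R,w_\alpha)$ that intertwines $S_\alpha$ with itself; thus $\vartheta_\alpha(T)$ is similar to the shift $S_\alpha$ on $L^2(\mathbb R,w_\alpha)$. With the convention $\omega_\alpha(n)=1$ for $n\geq 0$ and $\omega_\alpha(n)$ for $n\leq-1$ given by \eqref{omegan}, a direct rewriting of \eqref{omegarr} shows that the step function $w_\alpha$ equals $\omega_\alpha^2(n)$ on $(n\alpha,(n+1)\alpha)$ for every $n\in\mathbb Z$. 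Decomposing
\[
L^2(\mathbb R,w_\alpha)=\bigoplus_{n\in\mathbb Z}L^2((n\alpha,(n+1)\alpha),w_\alpha)
\]
and sending $f\mapsto\{u(n)\}_{n\in\mathbb Z}$ with $u(n)(s)=f(n\alpha+s)\in L^2(0,\alpha)$ gives a unitary identification $L^2(\mathbb R,w_\alpha)\cong\ell^2_{\omega_\alpha}(\mathbb Z)\otimes L^2(0,\alpha)$ under which $S_\alpha$ becomes $S_{\omega_\alpha}\otimes I_{L^2(0,\alpha)}$. Since $L^2(0,\alpha)$ is separable and infinite dimensional, this tensor product is unitarily equivalent to $\bigoplus_{j\in\mathbb N}S_{\omega_\alpha}$, giving the claim.

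There is no serious obstacle in this route; the content of the theorem is packaged in Theorem \ref{thmconv} together with the shift interpretation \eqref{shiftalpha}. The only things requiring careful bookkeeping are the reindexing in \eqref{omegarr} that puts $w_\alpha$ into the tidy form $\omega_\alpha^2(n)$ on $(n\alpha,(n+1)\alpha)$, and the verification that the shift direction of $S_\alpha$ on $L^2(\mathbb R,w_\alpha)$ matches that of $S_{\omega_\alpha}$ (both implement $u(n)\mapsto u(n-1)$ after the chosen identification), so that the resulting bilateral weighted shift is indeed $S_{\omega_\alpha}$ and not its inverse.
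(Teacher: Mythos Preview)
Your proof is correct and follows essentially the same route as the paper's: compute $\vartheta_\alpha\circ\varpi=\theta_\alpha$, invoke Theorem~\ref{thmconv} to identify $\vartheta_\alpha(T)$ with the translation operator on $L^2(\mathbb R,\widetilde\phi_\alpha)$, pass to $L^2(\mathbb R,w_\alpha)$ via \eqref{wphi}, and then recognize the shift by $\alpha$ on this step-weighted space as a vector-valued bilateral shift with coefficient space $L^2(0,\alpha)$, hence as $\bigoplus_{j\in\mathbb N}S_{\omega_\alpha}$. Your additional check that the reindexing of \eqref{omegarr} gives $w_\alpha\equiv\omega_\alpha^2(n)$ on $(n\alpha,(n+1)\alpha)$ and that the shift direction matches $(S_{\omega_\alpha}u)(n)=u(n-1)$ is a welcome bit of care the paper leaves implicit.
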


\begin{proof} Recall that $\varpi$ and $\theta_\alpha$ are defined by  \eqref{kappa} and \eqref{thetaplane}, respectively. 
Clearly, $\vartheta_\alpha\circ\varpi=\theta_\alpha$. 
By Theorem \ref{thmconv}, $\vartheta_\alpha(T)$ is unitarily equivalent to 
 $\frac{1}{\sqrt{2\pi}}\mathcal C_{\mathcal F \theta_\alpha}$ acting on 
$L^2(\mathbb R,\widetilde\phi_\alpha)$. Define $w_\alpha$ by \eqref{omegarr}. 
By \eqref{wphi}, $\frac{1}{\sqrt{2\pi}}\mathcal C_{\mathcal F \theta_\alpha}$ is similar to the same operator acting on 
$L^2(\mathbb R,w_\alpha)$. 
We have
 \begin{align*}L^2(\mathbb R,w_\alpha)&=\{\oplus_{n\in\mathbb Z}f_n\ :\ f_n \in L^2(n\alpha,(n+1)\alpha), \\&
\ \ \ \ \sum_{n\in\mathbb Z}\|f_n\|_{L^2(n\alpha,(n+1)\alpha)}^2\omega^2_\alpha(n)<\infty\}.\end{align*}
By \eqref{shiftalpha}, $\frac{1}{\sqrt{2\pi}}(\mathcal C_{\mathcal F \theta_\alpha}f)(t)=f(t-\alpha)$, $t\in\mathbb R$.

Therefore, $ \frac{1}{\sqrt{2\pi}}\mathcal C_{\mathcal F \theta_\alpha}$ on $L^2(\mathbb R,w_\alpha)$ 
is unitarily equivalent to the bilateral shift on the weighted space of sequences
$\{f_n\}_{n\in\mathbb Z}$, where $f_n \in L^2(0,\alpha)$. Since $\dim L^2(0,\alpha)=\infty$, we conclude that 
$ \frac{1}{\sqrt{2\pi}}\mathcal C_{\mathcal F \theta_\alpha}$ on $L^2(\mathbb R,w_\alpha)$ is unitarily equivalent to 
$\oplus_{j\in\mathbb N}S_{\omega_\alpha}$.
\end{proof}

\begin{corollary}\label{cor72} Let $T$ be defined as in Proposition \ref{propttnu} with $\nu$ as in Lemma \ref{lemcircle}. 
Then for every $\alpha>0$ there exists a singular inner function $\eta_\alpha \in H^\infty$ such that 
the range of $(\eta_\alpha\circ\vartheta_\alpha)(T)$ is not dense. 
\end{corollary}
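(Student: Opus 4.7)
The plan is to use Theorem~\ref{thmsimshift} to reduce to a bilateral weighted shift, invoke the results of \cite{est} on singular inner functions applied to such shifts, and then pull the conclusion back to $T$ through the similarity and the composition rule for the $H^\infty$ functional calculus.

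First, I would note that Theorem~\ref{thmsimshift} provides an invertible transformation $Z$ with $Z\vartheta_\alpha(T)=VZ$, where $V=\oplus_{j\in\mathbb N}S_{\omega_\alpha}$. The weight $\omega_\alpha$ is nonincreasing on $\mathbb Z$, equal to $1$ on $\mathbb Z_{\geq 0}$, and by Lemma~\ref{lemexp} satisfies $\omega_\alpha(-n-1)\leq C_\varepsilon \mathrm{e}^{\varepsilon n}$ for every $\varepsilon>0$. In particular $S_{\omega_\alpha}$ is a contraction, and together with the submultiplicativity established in Corollary~\ref{corsubmult} its weight falls squarely into the framework of \cite{est}.

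Second, I would invoke the main existence result of \cite{est} for bilateral weighted shifts whose weight is nonincreasing and grows sub-exponentially at $-\infty$: it produces a singular inner function $\eta_\alpha\in H^\infty$ (typically of the form $\exp(\beta(z+1)/(z-1))$ with a single point mass on $\mathbb T$, or a related atomic singular function) such that $\eta_\alpha(S_{\omega_\alpha})$ has non-dense range in $\ell^2_{\omega_\alpha}$. Taking the countable direct sum, $\eta_\alpha(V)=\oplus_{j\in\mathbb N}\eta_\alpha(S_{\omega_\alpha})$ also has non-dense range. Since $T$ admits an $H^\infty$ functional calculus by Proposition~\ref{propttmain}, the composition rule yields $(\eta_\alpha\circ\vartheta_\alpha)(T)=\eta_\alpha(\vartheta_\alpha(T))$, and the intertwining $Z\,\eta_\alpha(\vartheta_\alpha(T))=\eta_\alpha(V)\,Z$ then transfers the non-dense range property: the closure of the range of $(\eta_\alpha\circ\vartheta_\alpha)(T)$ equals the $Z$-preimage of the closure of the range of $\eta_\alpha(V)$, hence is a proper subspace.

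The main obstacle I anticipate is matching the precise statement from \cite{est} to the weights at hand. The sub-exponential bound of Lemma~\ref{lemexp} is what should fit Esterle's hypotheses, while the submultiplicative inequality of Corollary~\ref{corsubmult} and the convolution bound of Lemma~\ref{lemconvsubmult} are likely needed to ensure that the chosen singular inner function $\eta_\alpha$ actually acts as a bounded operator on $\ell^2_{\omega_\alpha}$, so that $\eta_\alpha(S_{\omega_\alpha})$ is well defined as an operator to which \cite{est} may be applied.
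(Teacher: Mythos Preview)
Your proposal is correct and follows essentially the same route as the paper: verify that $\omega_\alpha$ is a dissymmetric weight in the sense of \cite{est} (via submultiplicativity and sub-exponential growth), apply \cite[Theorem~5.7]{est} to obtain $\eta_\alpha$ with $\eta_\alpha(S_{\omega_\alpha})$ having non-dense range, pass to the direct sum, and pull back through the similarity of Theorem~\ref{thmsimshift} and the composition rule $(\eta_\alpha\circ\vartheta_\alpha)(T)=\eta_\alpha(\vartheta_\alpha(T))$. Two small cleanups: the paper cites Lemma~\ref{lemsubmult} (discrete submultiplicativity of $\omega_\alpha^2$) rather than Corollary~\ref{corsubmult} (its continuous consequence for $w_\alpha$), which is the more direct input to Esterle's dissymmetric-weight hypotheses; and Lemma~\ref{lemconvsubmult} plays no role here, so you can drop that speculation from your final write-up.
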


\begin{proof} Without loss of generality, we may assume that $\sum_{k=1}^\infty a_k\leq 1$. (Else, the weight 
 $\{\omega_\alpha(n)\}_{n\in\mathbb Z}$ constructed in \eqref{omegan} can be replaced by 
$\omega^2_{1\alpha}(n)=\sum_{k=1}^\infty a_k\omega^2_\alpha(n)$ for $n\leq -1$ and $\omega_{1\alpha}(n)=\omega_\alpha(n)=1$ for $n\geq 0$.)

Define the weight $\omega_\alpha=\{\omega_\alpha(n)\}_{n\in\mathbb Z}$ by \eqref{omegan}. It follows from Lemmas \ref{lemsubmult} and \ref{lemexp} 
that $\omega_\alpha$ is a dissymmetric weight (see \cite{est} for definition). By {\cite[Theorem 5.7]{est}}, 
there exists a singular inner function $\eta_\alpha \in H^\infty$ (which depends on $\omega_\alpha$) such that 
the range of $\eta_\alpha(S_{\omega_\alpha})$ is not dense. Therefore, the range of $\eta_\alpha(\oplus_{j\in\mathbb N}S_{\omega_\alpha})$
is not dense. Taking into account that $$(\eta_\alpha\circ\vartheta_\alpha)(T)=(\eta_\alpha(\vartheta_\alpha(T))$$ and 
applying Theorem \ref{thmsimshift} we obtain the conclusion of the corollary.
\end{proof}

\begin{remark}\label{remark73} Let $T$ be an operator which admits an $H^\infty$-functional calculus (see {\cite[Sec. 5]{ker4}}. 
Let $\vartheta\in H^\infty$ be a singular inner function with at least two singularities. If $\vartheta(T)$ is invertible and 
$\sigma(T)=\mathbb T$, then $T$ cannot be quasianalytic by {\cite[Theorems 2.5 and 2.6]{gam19}}. 

For $\alpha>0$, let $\vartheta_\alpha$ be defined in Theorem \ref{thmsimshift}. Then $\vartheta_\alpha$ has the only singularity at 
a point  $1\in\mathbb T$. 
 Let $T$ be defined as 
in Proposition \ref{propttnu} with $\nu$ as in Lemma \ref{lemcircle}. By Proposition \ref{propttnu}, $\sigma(T)=\mathbb T$. 
By Theorem \ref{thmsimshift}, $\vartheta_\alpha(T)$ is similar to $\oplus_{j\in\mathbb N}S_{\omega_\alpha}$. 
By \cite{est}, $S_{\omega_\alpha}$ is invertible. Thus, $\vartheta_\alpha(T)$ is invertible. By results of Sec. 6, $T$ can be quasianalytic. 
\end{remark}

\section{Convolution and Fourier transform}

The results of this section will be applied in Sec. 9. 
Recall that $\mathcal F$ and $\mathcal C_\varphi$  denote the Fourier transform and convolution with a function $\varphi $, 
see \eqref{fourier} and \eqref{defconv}.

\begin{lemma}\label{lemconv1} Suppose that $\delta>0$, $\psi\in L^1(\mathbb R)\cap C(\mathbb R)$, and 
\begin{equation}\label{intpsidelta} \int_{-\delta}^\delta\Bigl|\frac{\psi(t)-\psi(0)}{t}\Bigr|{\mathrm d}t<\infty.
\end{equation}
For $t\in\mathbb R$ set 
%\begin{equation}\label{psideltainfty}
$$ \psi_1(t)=\int_{-\delta}^\delta\frac{\mathrm{e}^{\mathrm{i}ts}-1}{s}\psi(s){\mathrm d}s. $$
%\end{equation}
Then  $\psi_1\in L^\infty(\mathbb R)$.  
For $f\in\mathcal D(\mathbb R)$ and $t\in\mathbb R$ set 
%\begin{equation}\label{aapsidelta}
$$ (\mathcal A_{1\psi}f)(t)=\int_{-\delta}^\delta \psi(s)\frac{f(t-s)-f(t)}{s}{\mathrm d}s. $$
%\end{equation}
Then $\mathcal A_{1\psi}f\in L^\infty(\mathbb R)$, and if $-\infty<b_1<b_2<+\infty$ are such that $f(t)=0$ for 
$t\in(-\infty,b_1]\cup[b_2,+\infty)$, then $ (\mathcal A_{1\psi}f)(t)=0$ for $t\in(-\infty,b_1-\delta]\cup[b_2+\delta,+\infty)$. 
Furthermore,
$$\mathcal F^{-1}\mathcal A_{1\psi} f=\psi_1 \mathcal F^{-1}f.$$
Consequently, $\mathcal A_{1\psi}$ can be extended from $\mathcal D(\mathbb R)$ onto $L^2(\mathbb R)$ 
and  $$\mathcal A_{1\psi}\in\mathcal L(L^2(\mathbb R)).$$  
%is a (linear, bounded) operator on $L^2(\mathbb R)$.
\end{lemma}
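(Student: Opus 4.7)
The plan is to prove the assertions in the order stated, with the $L^\infty$ bound on $\psi_1$ being the only nonroutine step. For $\psi_1\in L^\infty(\mathbb R)$, I would split
$$\psi_1(t)=\int_{-\delta}^\delta\frac{\mathrm{e}^{\mathrm{i}ts}-1}{s}(\psi(s)-\psi(0))\,\mathrm{d}s+\psi(0)\int_{-\delta}^\delta\frac{\mathrm{e}^{\mathrm{i}ts}-1}{s}\,\mathrm{d}s.$$
The first summand is bounded uniformly in $t$ by $2\int_{-\delta}^\delta|\psi(s)-\psi(0)|/|s|\,\mathrm{d}s$, via $|\mathrm{e}^{\mathrm{i}ts}-1|\leq 2$ and hypothesis \eqref{intpsidelta}. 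In the second, $(\cos(ts)-1)/s$ is odd in $s$ and integrates to zero, while $\sin(ts)/s$ is even in $s$ and yields $2\mathrm{i}\psi(0)\mathrm{Si}(t\delta)$ after the substitution $u=ts$ (with an appropriate sign for $t<0$); since the sine integral $\mathrm{Si}$ is bounded on $\mathbb R$, so is $\psi_1$.

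Next, for $f\in\mathcal D(\mathbb R)$ the mean value theorem gives $|(f(t-s)-f(t))/s|\leq\|f'\|_\infty$, hence $\mathcal A_{1\psi}f\in L^\infty(\mathbb R)$ with $\|\mathcal A_{1\psi}f\|_\infty\leq\|f'\|_\infty\|\psi\|_{L^1(-\delta,\delta)}$. If $f$ vanishes outside $[b_1,b_2]$ and $t\notin[b_1-\delta,b_2+\delta]$, then for every $s\in(-\delta,\delta)$ both $t$ and $t-s$ lie outside $[b_1,b_2]$, so the integrand vanishes, giving the support statement. For the Fourier identity I would apply Fubini (justified since the integrand has compact $u$-support contained in $[b_1-\delta,b_2+\delta]$ and is dominated by $\|f'\|_\infty|\psi(s)|\in L^1$) and compute
$$\frac{1}{\sqrt{2\pi}}\int_{\mathbb R}\mathrm{e}^{\mathrm{i}tu}(f(u-s)-f(u))\,\mathrm{d}u=(\mathrm{e}^{\mathrm{i}ts}-1)(\mathcal F^{-1}f)(t)$$
by the translation substitution $u\mapsto u+s$; then pulling $(\mathcal F^{-1}f)(t)$ outside the $s$-integral produces exactly $\psi_1(t)(\mathcal F^{-1}f)(t)$.

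Finally, since $\psi_1\in L^\infty$ the operator $\mathcal F\mathcal M_{\psi_1}\mathcal F^{-1}$ is bounded on $L^2(\mathbb R)$ with norm $\leq\|\psi_1\|_\infty$, and by the preceding identity it coincides with $\mathcal A_{1\psi}$ on the dense subset $\mathcal D(\mathbb R)$, so extension by continuity yields $\mathcal A_{1\psi}\in\mathcal L(L^2(\mathbb R))$. The main obstacle is the $\psi_1\in L^\infty$ step: hypothesis \eqref{intpsidelta} is precisely what is needed to absorb, uniformly in $t$, the $|t|$-growth that the factor $(\mathrm{e}^{\mathrm{i}ts}-1)/s$ would otherwise produce near $s=0$.
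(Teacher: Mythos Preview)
Your proposal is correct and follows essentially the same approach as the paper: the same splitting of $\psi_1$ using hypothesis \eqref{intpsidelta} and the boundedness of the sine integral, the same mean-value bound for $\mathcal A_{1\psi}f$, the same support observation, and the same Fubini-plus-translation computation for the Fourier identity. The paper's proof is organized identically, with no additional ideas beyond what you outline.
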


\begin{proof} We have 
\begin{align*} |\psi_1(t)|\leq 
\int_{-\delta}^\delta\Bigl|\frac{\psi(s)-\psi(0)}{s}\Bigr||\mathrm{e}^{\mathrm{i}ts}-1|{\mathrm d}s + 
|\psi(0)|\Bigl|\int_{-\delta}^\delta\frac{\mathrm{e}^{\mathrm{i}ts}-1}{s}{\mathrm d}s\Bigl|,
\end{align*}
and \begin{align*}\int_{-\delta}^\delta\frac{\mathrm{e}^{\mathrm{i}ts}-1}{s}{\mathrm d}s=
\mathrm{i}\int_{-\delta}^\delta\frac{\sin ts}{s}{\mathrm d}s =\mathrm{i}\int_{-\delta t}^{\delta t}\frac{\sin s}{s}{\mathrm d}s.\end{align*}
Since $$\sup_{c>0}\Bigl|\int_{-c}^c\frac{\sin s}{s}{\mathrm d}s\Bigr|<\infty,$$
we conclude that $\psi_1\in L^\infty(\mathbb R)$.

Let $f\in\mathcal D(\mathbb R)$ and $t\in\mathbb R$. Then 
$$\max_{s\in[-\delta,\delta]}\Bigl|\frac{f(t-s)-f(t)}{s}\Bigr|\leq\max_{s\in[t-\delta,t+\delta]}|f'(s)|
\leq\max_{\mathbb R}|f'|<\infty.$$
%Since $f\in\mathcal S(\mathbb R)$, for every $n\in\mathbb N$ we have $\sup_{t\in\mathbb R}(1+|t|^n)|f'(t)|<\infty$, 
%therefore, $$\sup_{t\in\mathbb R}(1+|t|^n)\max_{s\in[t-\delta,t+\delta]}|f'(s)|<\infty$$
%(the latest $\sup$ also depends on $\delta$). 
Since $$|(\mathcal A_{1\psi}f)(t)|\leq\max_{s\in[-\delta,\delta]}\Bigl|\frac{f(t-s)-f(t)}{s}\Bigr|\int_{-\delta}^\delta |\psi(s)|{\mathrm d}s,$$
we conclude that $\mathcal A_{1\psi}f\in L^\infty(\mathbb R)$. 
Let $-\infty<b_1<b_2<+\infty$ be such that $f(t)=0$ for 
$t\in(-\infty,b_1]\cup[b_2,+\infty)$. Then $f(t-s)=f(t)=0$ for $t\in(-\infty,b_1-\delta]\cup[b_2+\delta,+\infty)$ and 
$s\in[-\delta,\delta]$.
%The statement about support of $\mathcal A_{1\psi}f$ follows from the definition. 
%for every $n\in\mathbb N$  $\sup_{t\in\mathbb R}(1+|t|^n)|(\mathcal A_{1\psi}f)(t)|<\infty$. 
%Since $$(\mathcal A_{1\psi}f)'(t)=\int_{-\delta}^\delta\psi(s)\frac{f'(t-s)-f'(t)}{s}{\mathrm d}s = 
%(\mathcal A_{1\psi}f')(t)$$ and  $f'\in\mathcal S(\mathbb R)$, we conclude that for every $n\in\mathbb N$ 
%$$\sup_{t\in\mathbb R}(1+|t|^n)|(\mathcal A_{1\psi}f)'(t)|<\infty,$$ and so on. Thus,  $\mathcal A_{1\psi}f\in\mathcal S(\mathbb R)$. 
Let $x\in\mathbb R$. By Fubini's theorem,  \begin{align*}
(\mathcal F^{-1}\mathcal A_{1\psi}f)(x)
&=\frac{1}{\sqrt{2\pi}}\int_{b_1-\delta}^{b_2+\delta}
\mathrm{e}^{\mathrm{i}xt}(\mathcal A_{1\psi}f)(t){\mathrm d}t 
\\ &= \frac{1}{\sqrt{2\pi}}\int_{b_1-\delta}^{b_2+\delta}\mathrm{e}^{\mathrm{i}xt}
\int_{-\delta}^\delta \psi(s)\frac{f(t-s)-f(t)}{s}{\mathrm d}s{\mathrm d}t  
\\& = \int_{-\delta}^\delta \frac{\psi(s)}{s} \frac{1}{\sqrt{2\pi}}
\int_{b_1-\delta}^{b_2+\delta}\mathrm{e}^{\mathrm{i}xt} (f(t-s)-f(t)){\mathrm d}t{\mathrm d}s
\\& = \int_{-\delta}^\delta \frac{\psi(s)}{s}(\mathrm{e}^{\mathrm{i}xs}-1) {\mathrm d}s(\mathcal F^{-1}f)(x) =\psi_1 (x)(\mathcal F^{-1}f)(x).\qedhere
\end{align*}
\end{proof}

\begin{theorem}\label{thmconv1} Let $\Psi\in L^\infty(\mathbb R)\cap C^1(\mathbb R)$ be such that 
$\Psi'\in L^1(\mathbb R)$. Set $\psi=\mathcal F\Psi'$. Suppose that $\psi\in L^1(\mathbb R)$ and 
 $\psi$ satisfies \eqref{intpsidelta} for some $\delta>0$ (and, consequently, for arbitrary finite $\delta$). 
For $f\in\mathcal D(\mathbb R)$ and $t\in\mathbb R$ set 
%\begin{equation}\label{aapsi}
$$ (\mathcal A_\psi f)(t)=\int_{\mathbb R}\psi(s)\frac{f(t-s)-f(t)}{s}{\mathrm d}s. $$
%\end{equation}
Then 
%$\mathcal A_\psi f\in\mathcal S(\mathbb R)$ and 
$$\mathcal A_\psi f={\mathrm i}\mathcal C_{\mathcal F\Psi}f -{\mathrm i}\sqrt{2\pi}\Psi(0)f, \ \ \ f\in\mathcal D(\mathbb R).$$  
\end{theorem}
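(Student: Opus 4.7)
The plan is to verify the identity by applying $\mathcal F^{-1}$ to both sides and showing both equal $\mathrm{i}\sqrt{2\pi}(\Psi(x)-\Psi(0))(\mathcal F^{-1}f)(x)$; since $\mathcal F^{-1}$ is injective on $L^2(\mathbb R)$ (into which both sides will land, as noted below), this will conclude the proof.

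First I would split $\mathcal A_\psi f = \mathcal A_{1\psi}f + \mathcal A_{2\psi}f$, where the first piece integrates over $(-\delta,\delta)$ and the second over $\{|s|\geq\delta\}$. Lemma \ref{lemconv1} already gives $\mathcal F^{-1}\mathcal A_{1\psi}f = \psi_1\cdot\mathcal F^{-1}f$ with $\psi_1(x)=\int_{-\delta}^{\delta}\frac{\mathrm{e}^{\mathrm{i}xs}-1}{s}\psi(s)\mathrm{d}s$. For $\mathcal A_{2\psi}f$, the integrand $\psi(s)/s$ lies in $L^1(\{|s|\geq\delta\})$ since $\psi\in L^1(\mathbb R)$, and $f\in\mathcal D(\mathbb R)$ is compactly supported, so Fubini applies to the same computation as in Lemma \ref{lemconv1} and yields $\mathcal F^{-1}\mathcal A_{2\psi}f = \psi_2\cdot\mathcal F^{-1}f$ with $\psi_2(x)=\int_{|s|\geq\delta}\frac{\mathrm{e}^{\mathrm{i}xs}-1}{s}\psi(s)\mathrm{d}s$. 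Adding the two, and noting that $|\mathrm{e}^{\mathrm{i}xs}-1|/|s|\leq |x|$ on $|s|\leq\delta$ and $\leq 2/\delta$ on $|s|\geq\delta$, I get
\begin{equation*}
(\mathcal F^{-1}\mathcal A_\psi f)(x)=\left(\int_{\mathbb R}\frac{\mathrm{e}^{\mathrm{i}xs}-1}{s}\psi(s)\mathrm{d}s\right)(\mathcal F^{-1}f)(x),
\end{equation*}
the integral converging absolutely.

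Next I would identify the multiplier with $\mathrm{i}\sqrt{2\pi}(\Psi(x)-\Psi(0))$. Since $\Psi\in C^1(\mathbb R)$ and $\Psi'\in L^1(\mathbb R)$ with $\mathcal F\Psi'=\psi\in L^1(\mathbb R)$, Fourier inversion gives $\Psi'(t)=(\mathcal F^{-1}\psi)(t)$ for every $t\in\mathbb R$ (both sides are continuous). Using $\Psi(x)-\Psi(0)=\int_0^x\Psi'(t)\mathrm{d}t$ and interchanging the order of integration (justified because $|\psi|\in L^1$ and the $t$-interval is bounded),
\begin{equation*}
\Psi(x)-\Psi(0)=\frac{1}{\sqrt{2\pi}}\int_{\mathbb R}\psi(s)\int_0^x\mathrm{e}^{\mathrm{i}st}\mathrm{d}t\,\mathrm{d}s=\frac{1}{\mathrm{i}\sqrt{2\pi}}\int_{\mathbb R}\frac{\mathrm{e}^{\mathrm{i}xs}-1}{s}\psi(s)\mathrm{d}s.
\end{equation*}
Hence $\mathcal F^{-1}(\mathcal A_\psi f)=\mathrm{i}\sqrt{2\pi}(\Psi-\Psi(0))\cdot\mathcal F^{-1}f$. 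On the other hand, \eqref{mmcc} with $\Psi\in L^\infty(\mathbb R)$ yields $\mathcal F^{-1}\mathcal C_{\mathcal F\Psi}f=\sqrt{2\pi}\,\Psi\cdot\mathcal F^{-1}f$, so
\begin{equation*}
\mathcal F^{-1}\bigl(\mathrm{i}\mathcal C_{\mathcal F\Psi}f-\mathrm{i}\sqrt{2\pi}\Psi(0)f\bigr)=\mathrm{i}\sqrt{2\pi}(\Psi-\Psi(0))\cdot\mathcal F^{-1}f,
\end{equation*}
which matches the expression above.

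Both sides of the claimed identity then lie in $L^2(\mathbb R)$: the left-hand side because $\mathcal A_{1\psi}f$ is bounded with compact support (Lemma \ref{lemconv1}) while $\mathcal A_{2\psi}f$ is the sum of a convolution of $f$ with an $L^1$ function and a compactly supported multiple of $f$; the right-hand side because $\Psi\cdot\mathcal F^{-1}f\in L^2(\mathbb R)$, so by Plancherel $\mathcal C_{\mathcal F\Psi}f\in L^2(\mathbb R)$. Injectivity of $\mathcal F^{-1}$ on $L^2(\mathbb R)$ concludes the proof. The main obstacle I anticipate is bookkeeping for the Fubini switches and making sure both sides are identified in a common space so that the inversion step is unambiguous; the hypothesis \eqref{intpsidelta} itself is not used directly in this argument, but only earlier through Lemma \ref{lemconv1} to know that $\mathcal A_{1\psi}f$ makes sense as an $L^\infty$ function.
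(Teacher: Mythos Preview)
Your proof is correct and follows essentially the same route as the paper: the same near/far decomposition of $\mathcal A_\psi$, the same use of Lemma~\ref{lemconv1} for the $(-\delta,\delta)$ piece, and the same Fourier-side identification of the multiplier. The only cosmetic difference is that the paper writes the far piece as $\mathcal C_{\psi_2}f-c_\psi f$ (with $\psi_2=\chi_{|s|\geq\delta}\,\psi/s$) and then identifies the multiplier $\Phi$ by differentiating under the integral sign to get $\Phi'=\mathrm{i}\sqrt{2\pi}\Psi'$, whereas you compute the Fubini swap directly and recover $\Phi$ by integrating $\Psi'=\mathcal F^{-1}\psi$ over $[0,x]$; these are dual computations and neither buys anything the other does not.
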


\begin{proof} Fix $\delta>0$. Set 
\begin{equation} \label{psi2} \psi_2(t)=\chi_{(-\infty,-\delta)\cup(\delta, +\infty)}(t)\frac{\psi(t)}{t}, \  \ t\in\mathbb R, 
\ \ \ \text{ and } \ \ \ c_\psi=\int_{\mathbb R}\psi_2(t){\mathrm d}t.
\end{equation} (Of course, $\psi_2$ and $c_\psi$ depend on $\delta$.)
Clearly, $\psi_2\in L^1(\mathbb R)$. Therefore, $\mathcal C_{\psi_2}f\in L^1(\mathbb R)\cap L^2(\mathbb R)$ 
 for every $f\in\mathcal D(\mathbb R)$.
 We have 
\begin{equation}\label{aasum}\mathcal A_\psi f = \mathcal A_{1\psi}f + \mathcal C_{\psi_2}f -c_\psi f, 
\ \ \ f\in\mathcal D(\mathbb R). 
\end{equation} 
Set $$\Phi(t)=\int_{\mathbb R}\frac{\mathrm{e}^{\mathrm{i}ts}-1}{s}\psi(s){\mathrm d}s,\ \ \ t\in\mathbb R.$$
By Lemma \ref{lemconv1}, $\Phi(t)\in L^\infty(\mathbb R)$ and
$\mathcal F^{-1}\mathcal A_{1\psi} f=\psi_1 \mathcal F^{-1}f$ for $f\in \mathcal D(\mathbb R)$.
Since $\psi_2\in L^1(\mathbb R)$, we have 
$$\mathcal F^{-1}\mathcal C_{\psi_2}f=\sqrt{2\pi}(\mathcal F^{-1}\psi_2)\cdot(\mathcal F^{-1}f), \ \ \ f\in\mathcal D(\mathbb R) $$
(see, for example, {\cite[Theorem VI.1.3]{katz}} or {\cite[Theorem 7.2]{rudin}}). Thus, 
\begin{equation}\label{aapsifourier} \mathcal F^{-1} \mathcal A_\psi f =\Phi  \mathcal F^{-1}f,  
\ \ \ f\in\mathcal D(\mathbb R).
\end{equation}

We will show that $\Phi'={\mathrm i}\sqrt{2\pi}\Psi'$. We have 
$$\Phi'(t)=\int_{\mathbb R}\Bigl(\frac{\mathrm{e}^{\mathrm{i}ts}-1}{s}\Bigr)'_t\psi(s){\mathrm d}s = 
\mathrm{i}\int_{\mathbb R}\mathrm{e}^{\mathrm{i}ts}\psi(s){\mathrm d}s,\ \ \ t\in\mathbb R,$$
because $\psi\in L^1(\mathbb R)$. 
Since  $L^1(\mathbb R)\subset\mathcal S'(\mathbb R)$ (see, for example, {\cite[Sec. VI.4.1]{katz}} or {\cite[Example 7.12(d)]{rudin}}), 
we have $\psi$, $\Psi'\in\mathcal S'(\mathbb R)$. Taking into account that $\psi=\mathcal F\Psi'$, we conclude that 
$$\int_{\mathbb R}\mathrm{e}^{\mathrm{i}ts}\psi(s){\mathrm d}s = \sqrt{2\pi}(\mathcal F^{-1}\psi)(t) = \sqrt{2\pi}\Psi'(t).$$
Therefore, $\Phi'=\mathrm{i}\sqrt{2\pi}\Psi'$ a.e. on $\mathbb R$. (Actually, $\Phi'(t)=\mathrm{i}\sqrt{2\pi}\Psi'(t)$ 
for every $t\in\mathbb R$, because $\Psi'\in C(\mathbb R)$ by assumption and $\Phi'=\mathrm{i}\sqrt{2\pi}\mathcal F^{-1}\psi$
with $\psi\in L^1(\mathbb R)$.)

Since $\Phi(0)=0$, we conclude that $\Phi={\mathrm i}\sqrt{2\pi}\Psi-\mathrm{i}\sqrt{2\pi}\Psi(0)$.
The conclusion of the theorem  follows from \eqref{aapsifourier} and \eqref{mmcc}.
\end{proof}

\section{Square root again}

Denote by $\varrho$ the branch of  square root defined in $\mathbb C\setminus [0,+\infty)$. 
Set \begin{equation}\label{psipsi}\Psi=\varrho\circ\varpi,\end{equation}
where $\varpi$ is defined by \eqref{kappa}. Then $\Psi$ is analytic in 
$\mathbb C\setminus\{\mathrm{i}y\ :\ |y|\geq 1\}$,
 $\Psi(\mathbb C\setminus\{\mathrm{i}y\ :\ |y|\geq 1\}) = \mathbb C_+$, and 
$$\Psi'(z)=\frac{{\mathrm i}}{\Psi(z)}\frac{1}{(z+{\mathrm i})^2}, \ \ \ z\in\mathbb C\setminus\{\mathrm{i}y\ :\ |y|\geq 1\}.$$
For $\lambda\not\in\mathbb C_+\cup\mathbb R$ set 
\begin{equation}\label{psilambda} \Psi_\lambda=\frac{1}{ \Psi-\lambda},\end{equation}
 then  $\Psi$ is analytic in 
$\mathbb C\setminus\{\mathrm{i}y\ :\ |y|\geq 1\}$ and $\Psi_\lambda'=-\frac{1}{(\Psi-\lambda)^2}\Psi'$. 

\bigskip

The proof of the following theorem can be found in {\cite[Sec. VI.7.1]{katz}}.

\begin{theorem}[Paley--Wiener]\label{thmkatzexp} Let $c>0$, and let a function $f$ be analytic in 
$\{z\in\mathbb C\ :\ z=t+\mathrm{i}y,\  t\in\mathbb R,\  y\in(-c,c)\}$ and such that 
\begin{equation}\label{katzexp}\sup_{y\in(-c,c)}\int_{\mathbb R}|f(t+\mathrm{i}y)|^2{\mathrm d}t<\infty.\end{equation}
Then $\int_{\mathbb R}\mathrm{e}^{2c|t|}|(\mathcal F(f|_{\mathbb R}))(t)|^2{\mathrm d}t<\infty$.
\end{theorem}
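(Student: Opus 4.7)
The plan is to prove, for each $y \in (-c,c)$, the identity
\begin{equation*}
g_y(t) := \mathcal F\bigl(f(\cdot + \mathrm{i}y)\bigr)(t) = \mathrm{e}^{-ty}\,g_0(t), \quad \text{a.e.\ } t \in \mathbb R, \tag{$\ast$}
\end{equation*}
where $g_0 := \mathcal F(f|_{\mathbb R})$, and then to combine it with Plancherel. Indeed, granting $(\ast)$, Plancherel together with \eqref{katzexp} gives
\begin{equation*}
\int_{\mathbb R}\mathrm{e}^{-2ty}|g_0(t)|^2\,\mathrm{d}t \;=\; \|g_y\|_{L^2(\mathbb R)}^2 \;=\; \|f(\cdot + \mathrm{i}y)\|_{L^2(\mathbb R)}^2 \;\leq\; M^2
\end{equation*}
uniformly for $y \in (-c,c)$, where $M$ denotes the supremum in \eqref{katzexp}. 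Letting $y \uparrow c$ and invoking monotone convergence on $\{t<0\}$ (where $\mathrm{e}^{-2ty}\uparrow \mathrm{e}^{2c|t|}$) will then yield $\int_{\{t<0\}}\mathrm{e}^{2c|t|}|g_0(t)|^2\,\mathrm{d}t \leq M^2$; the symmetric limit $y \downarrow -c$ will handle $\{t>0\}$, and adding the two bounds gives the conclusion of the theorem.

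To establish $(\ast)$ I would fix $y \in (0,c)$ (the case $y<0$ being analogous) and apply Cauchy's theorem to the function $z \mapsto \mathrm{e}^{-\mathrm{i}tz}f(z)$ on the rectangle with vertices $\pm R,\pm R + \mathrm{i}y$, which is contained in the strip of analyticity. The two horizontal sides produce
\begin{equation*}
\int_{-R}^R \mathrm{e}^{-\mathrm{i}ts}f(s)\,\mathrm{d}s \;-\; \mathrm{e}^{ty}\int_{-R}^R \mathrm{e}^{-\mathrm{i}ts}f(s+\mathrm{i}y)\,\mathrm{d}s \;=\; E_R^+ + E_R^-,
\end{equation*}
while by Cauchy--Schwarz the two vertical contributions satisfy
\begin{equation*}
|E_R^\pm| \;\leq\; \mathrm{e}^{|t|y}\, y^{1/2}\left(\int_0^y |f(\pm R + \mathrm{i}\tau)|^2\,\mathrm{d}\tau\right)^{\!1/2}.
\end{equation*}
By Fubini's theorem the hypothesis \eqref{katzexp} forces $R \mapsto \int_0^y(|f(R+\mathrm{i}\tau)|^2 + |f(-R+\mathrm{i}\tau)|^2)\,\mathrm{d}\tau$ to lie in $L^1((0,+\infty),\mathrm{d}R)$, so this quantity must vanish along some sequence $R_n \to +\infty$. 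For such $R_n$ the vertical errors $E_{R_n}^\pm$ tend to zero, while simultaneously $(2\pi)^{-1/2}\int_{-R_n}^{R_n}\mathrm{e}^{-\mathrm{i}ts}f(s+\mathrm{i}y)\,\mathrm{d}s$ converges in $L^2(\mathbb R,\mathrm{d}t)$ to $g_y$ (and similarly for $g_0$ at $y=0$); passing to a further subsequence to obtain a.e.\ convergence will then deliver $(\ast)$ after dividing by $\sqrt{2\pi}\,\mathrm{e}^{ty}$.

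The main technical obstacle is exactly this contour-shift step. Because $f$ is only assumed $L^2$ on horizontal lines, the ``Fourier integral'' is not pointwise absolutely convergent and the vertical contributions cannot be controlled uniformly in $R$. The remedy is the Fubini-based selection of $R_n$ sketched above, which simultaneously kills the vertical error and (after a sub-subsequence) produces a.e.\ convergence of the truncated Fourier integrals. Once $(\ast)$ is in hand, the monotone-convergence step of the first paragraph concludes the argument.
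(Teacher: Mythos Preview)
Your argument is correct and is precisely the classical contour-shift proof of this Paley--Wiener theorem. Note that the paper does not supply its own proof: it simply refers the reader to \cite[Sec.~VI.7.1]{katz}, and the proof there proceeds along the same lines as yours (establishing the shift identity $\mathcal F(f(\cdot+\mathrm{i}y))(t)=\mathrm{e}^{-ty}\mathcal F(f|_{\mathbb R})(t)$ via Cauchy's theorem and a Fubini-based selection of radii, then passing to the limit in $y$).
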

\medskip
\begin{lemma}\label{lempsilambda1}Let $\lambda\not\in\mathbb C_+\cup\mathbb R$, and let  $\Psi_\lambda$ be defined by
\eqref{psilambda}. Then for every $0<c<1$  $\Psi_\lambda'$ satisfies \eqref{katzexp}. 
\end{lemma}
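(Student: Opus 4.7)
The plan is to reduce the estimate for $\Psi_\lambda'$ to one for $\Psi'$, and then to control the latter by a direct pointwise calculation, exploiting the fact that the horizontal strip $|\operatorname{Im} z|<c<1$ is uniformly separated from the branch cut $\{\mathrm{i}y\ :\ |y|\geq 1\}$.

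First I would bound the denominator $|\Psi-\lambda|$ from below. Since $\lambda\notin\mathbb{C}_+\cup\mathbb{R}$, we have $\operatorname{Im}\lambda<0$. On the other hand, $\Psi$ takes values in $\mathbb{C}_+$, so $\operatorname{Im}\Psi(z)\geq 0$ wherever $\Psi$ is defined; in particular,
\[
|\Psi(z)-\lambda|\ \geq\ \operatorname{Im}\bigl(\Psi(z)-\lambda\bigr)\ \geq\ -\operatorname{Im}\lambda\ =\ |\operatorname{Im}\lambda|>0.
\]
Combined with $\Psi_\lambda'=-\Psi'/(\Psi-\lambda)^2$, this yields $|\Psi_\lambda'(z)|^2\leq |\Psi'(z)|^2/|\operatorname{Im}\lambda|^4$. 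So it suffices to prove that $\sup_{|y|<c}\int_{\mathbb{R}}|\Psi'(t+\mathrm{i}y)|^2\,\mathrm{d}t<\infty$ for every $0<c<1$, with a bound depending only on $c$.

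Next I would use the explicit formula $\Psi'(z)=\mathrm{i}/(\Psi(z)(z+\mathrm{i})^2)$ given just after \eqref{psipsi}, together with the identity $|\Psi(z)|^2=|\varpi(z)|=|z-\mathrm{i}|/|z+\mathrm{i}|$ (which follows from $\Psi^2=\varpi$). This gives the clean pointwise formula
\[
|\Psi'(z)|^2\ =\ \frac{1}{|\Psi(z)|^2\,|z+\mathrm{i}|^4}\ =\ \frac{1}{|z-\mathrm{i}|\,|z+\mathrm{i}|^3}.
\]
For $z=t+\mathrm{i}y$ with $y\in(-c,c)$ and $0<c<1$, one has $(y\pm 1)^2\geq (1-c)^2$, so
\[
|z-\mathrm{i}|\ \geq\ \sqrt{t^2+(1-c)^2}\qquad\text{and}\qquad|z+\mathrm{i}|\ \geq\ \sqrt{t^2+(1-c)^2}.
\]

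Consequently
\[
|\Psi'(t+\mathrm{i}y)|^2\ \leq\ \frac{1}{(t^2+(1-c)^2)^2},
\]
and integrating gives $\int_{\mathbb{R}}|\Psi'(t+\mathrm{i}y)|^2\,\mathrm{d}t\leq \pi/(2(1-c)^3)$, independent of $y\in(-c,c)$. Combining this with the first step,
\[
\sup_{y\in(-c,c)}\int_{\mathbb{R}}|\Psi_\lambda'(t+\mathrm{i}y)|^2\,\mathrm{d}t\ \leq\ \frac{\pi}{2(1-c)^3\,|\operatorname{Im}\lambda|^4},
\]
which is \eqref{katzexp} for $\Psi_\lambda'$. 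There is no real obstacle here: once one notes the separation of the strip from the branch cut (so that neither $\Psi$ nor $z\pm\mathrm{i}$ degenerate) and extracts $|\operatorname{Im}\lambda|$ from the half-plane geometry, the verification reduces to the trivial integral $\int \mathrm{d}t/(t^2+a^2)^2$.
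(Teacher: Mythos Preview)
Your proof is correct and follows the same approach as the paper: reduce $|\Psi_\lambda'|$ to $|\Psi'|$ via the lower bound $|\Psi-\lambda|\geq\operatorname{dist}(\lambda,\mathbb C_+)=|\operatorname{Im}\lambda|$, then verify \eqref{katzexp} for $\Psi'$ by the explicit pointwise formula $|\Psi'(t+\mathrm{i}y)|^2=\bigl(|z-\mathrm{i}|\,|z+\mathrm{i}|^3\bigr)^{-1}=\bigl((t^2+(1-y)^2)^{1/2}(t^2+(1+y)^2)^{3/2}\bigr)^{-1}$. The paper stops at this formula and leaves the integrability implicit, whereas you add the uniform majorant $(t^2+(1-c)^2)^{-2}$ and the explicit constant $\pi/(2(1-c)^3|\operatorname{Im}\lambda|^4)$, but the argument is identical.
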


\begin{proof} We have $|\Psi_\lambda'|\leq\frac{1}{\operatorname{dist}(\lambda,\mathbb C_+)^2}|\Psi'|$. 
Therefore, it sufficient to proof that $\Psi'$ satisfies \eqref{katzexp}, which follows from 
the equality $$|\Psi'(t+\mathrm{i}y)|^2=\frac{1}{(t^2+(1-y)^2)^{1/2}(t^2+(1+y)^2)^{3/2}}, \ \ \ t\in\mathbb R, \ \ |y|<1.  \qedhere$$
\end{proof}

\begin{lemma}\label{lempsilambda2}Let $\lambda\not\in\mathbb C_+\cup\mathbb R$, and let  $\Psi_\lambda$ be defined by
\eqref{psilambda}. Set $\psi_\lambda=\mathcal F \Psi_\lambda'$. 
Let $\delta>0$. Then $\psi_\lambda$ satisfies \eqref{intpsidelta}. 
\end{lemma}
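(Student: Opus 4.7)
The plan is to establish that $\psi_\lambda$ is Hölder continuous of some positive order at the origin; then the integrand $|\psi_\lambda(t)-\psi_\lambda(0)|/|t|$ will be dominated by $|t|^{\alpha-1}$, which is integrable on $(-\delta,\delta)$ for any $\alpha\in(0,1)$. So \eqref{intpsidelta} will follow immediately.

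To obtain such a Hölder bound I first need pointwise control of $\Psi_\lambda'$ on the real line. From the formulas stated just before the lemma, $\Psi_\lambda'=-\Psi'/(\Psi-\lambda)^2$ with $\Psi'(z)=\mathrm{i}/(\Psi(z)(z+\mathrm{i})^2)$. Since $\varpi$ maps $\mathbb R$ into $\mathbb T$, one has $|\Psi(s)|=1$ for every $s\in\mathbb R$, so $|\Psi'(s)|=1/(s^2+1)$. Moreover, the assumption $\lambda\notin\mathbb C_+\cup\mathbb R$ forces $\lambda\in\mathbb C_-$, while $\Psi(\mathbb R)\subset\overline{\mathbb C_+}$; hence $\operatorname{dist}(\lambda,\Psi(\mathbb R))\geq|\operatorname{Im}\lambda|>0$. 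Combining these estimates yields a bound of the form
$$|\Psi_\lambda'(s)|\leq \frac{C_\lambda}{s^2+1}, \qquad s\in\mathbb R.$$

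Consequently, for any $\alpha\in(0,1)$ the function $|s|^\alpha|\Psi_\lambda'(s)|$ belongs to $L^1(\mathbb R)$. Inserting the elementary inequality $|\mathrm{e}^{-\mathrm{i}ts}-1|\leq 2^{1-\alpha}|ts|^\alpha$ into the formula defining the Fourier transform yields
$$|\psi_\lambda(t)-\psi_\lambda(0)|\leq \frac{2^{1-\alpha}}{\sqrt{2\pi}}|t|^\alpha\int_{\mathbb R}|s|^\alpha|\Psi_\lambda'(s)|\mathrm{d}s\leq C|t|^\alpha$$
for all $t\in\mathbb R$. The desired integral is then bounded by $C\int_{-\delta}^\delta|t|^{\alpha-1}\mathrm{d}t<\infty$.

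No real obstacle is anticipated: the whole argument reduces to the explicit boundary estimate $|\Psi'(s)|=1/(s^2+1)$, combined with the standard Hölder-continuity criterion for Fourier transforms of weighted $L^1$ functions. One could alternatively invoke Lemma \ref{lempsilambda1} and the Paley--Wiener Theorem \ref{thmkatzexp} to obtain exponential decay of $\psi_\lambda$ at infinity, but that is not needed here—for \eqref{intpsidelta} only the local behavior of $\psi_\lambda$ near the origin matters, and the fractional-moment approach above captures it directly.
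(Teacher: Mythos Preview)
Your proof is correct. Both your argument and the paper's start from the same pointwise bound $|\Psi_\lambda'(s)|\le C_\lambda/(1+s^2)$, obtained exactly as you describe. The paper then writes
\[
\int_0^\delta\Bigl|\frac{\psi_\lambda(t)-\psi_\lambda(0)}{t}\Bigr|\,{\mathrm d}t
\le \frac{C_\lambda}{\sqrt{2\pi}}\int_0^\delta\int_{\mathbb R}\Bigl|\frac{\mathrm{e}^{-\mathrm{i}ts}-1}{t}\Bigr|\frac{{\mathrm d}s}{1+s^2}\,{\mathrm d}t,
\]
applies Fubini, and evaluates the inner integral by the substitution $s\mapsto s/t$, arriving at $2\int_{\mathbb R}\frac{|\sin(s/2)|}{|s|}\arctan\frac{\delta}{|s|}\,{\mathrm d}s<\infty$. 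You instead invoke the interpolation bound $|\mathrm{e}^{-\mathrm{i}ts}-1|\le 2^{1-\alpha}|ts|^\alpha$ to get H\"older continuity $|\psi_\lambda(t)-\psi_\lambda(0)|\le C|t|^\alpha$ directly, and then integrate $|t|^{\alpha-1}$. Your route is a bit more conceptual---it isolates the standard principle that a finite fractional moment of $\Psi_\lambda'$ yields H\"older regularity of its Fourier transform---while the paper's route is a hands-on double-integral computation; both are short and neither requires anything beyond the decay $|\Psi'(s)|=1/(1+s^2)$.
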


\begin{proof} Since $\Psi_\lambda'\in L^1(\mathbb R)$, we have 
$$\frac{\psi_\lambda(t)-\psi_\lambda(0)}{t}=
\frac{1}{\sqrt{2\pi}}\int_{\mathbb R}\frac{\mathrm{e}^{-\mathrm{i}ts}-1}{t}\Psi_\lambda'(s){\mathrm d}s.$$
Therefore, \begin{align*}  
\int_0^\delta\Bigl|\frac{\psi_\lambda(t)-\psi_\lambda(0)}{t}\Bigr|{\mathrm d}t \leq
\frac{1}{\sqrt{2\pi}}\int_0^\delta\int_{\mathbb R}\Bigl|\frac{\mathrm{e}^{-\mathrm{i}ts}-1}{t}\Bigr||\Psi_\lambda'(s)|{\mathrm d}s{\mathrm d}t \\ 
\leq\frac{1}{\sqrt{2\pi}}\frac{1}{\operatorname{dist}(\lambda,\mathbb C_+)^2}\int_0^\delta
\int_{\mathbb R}\Bigl|\frac{\mathrm{e}^{-\mathrm{i}ts}-1}{t}\Bigr||\Psi'(s)|{\mathrm d}s{\mathrm d}t.
\end{align*}
We have $|\Psi'(s)|=\frac{1}{1+s^2}$, $s\in\mathbb R$, and 
 \begin{align*}\int_0^\delta
\int_{\mathbb R}\Bigl|\frac{\mathrm{e}^{-\mathrm{i}ts}-1}{t}\Bigr||\Psi'(s)|{\mathrm d}s{\mathrm d}t &=
2\int_0^\delta
\int_{\mathbb R}\frac{\bigl|\sin\frac{ts}{2}\bigr|}{|t|}\frac{1}{1+s^2}{\mathrm d}s{\mathrm d}t  \\ =
 2\int_0^\delta
\int_{\mathbb R}\frac{\bigl|\sin\frac{s}{2}\bigr|}{|t|}\frac{1}{1+(\frac{s}{t})^2}\frac{{\mathrm d}s}{|t|}{\mathrm d}t &=
2\int_{\mathbb R}\bigl|\sin\frac{s}{2}\bigr|\int_0^\delta\frac{1}{t^2+s^2}{\mathrm d}t{\mathrm d}s 
\\&=2\int_{\mathbb R}\frac{\bigl|\sin\frac{s}{2}\bigr|}{|s|}\arctan\frac{\delta}{|s|}{\mathrm d}s<\infty.
\end{align*}
Thus, $$\int_0^\delta\Bigl|\frac{\psi_\lambda(t)-\psi_\lambda(0)}{t}\Bigr|{\mathrm d}t<\infty.$$
The estimate for $\int_{-\delta}^0\Bigl|\frac{\psi_\lambda(t)-\psi_\lambda(0)}{t}\Bigr|{\mathrm d}t$ is obtained similarly. 
\end{proof}

The following two lemma are proved exactly as Lemmas \ref{lempsilambda1} and \ref{lempsilambda2}, 
therefore, their proofs are omitted. 
 
\begin{lemma}\label{lempsi1}Let  $\Psi$ be defined by
\eqref{psipsi}. Then for every $0<c<1$  $\Psi'$ satisfies \eqref{katzexp}. 
\end{lemma}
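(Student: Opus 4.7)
The plan is to mirror the final step in the proof of Lemma \ref{lempsilambda1}, which already does the essential work for $\Psi'$. Namely, from the definition $\Psi = \varrho \circ \varpi$ and the formula $\Psi'(z) = \frac{\mathrm i}{\Psi(z)(z+\mathrm i)^2}$, a direct calculation (using $|\Psi(z)|^2 = |\varpi(z)|$ together with $|\varpi(t+\mathrm iy)|^2 = \frac{t^2+(1-y)^2}{t^2+(1+y)^2}$) yields the explicit identity
\begin{equation*}
|\Psi'(t+\mathrm iy)|^2 = \frac{1}{(t^2+(1-y)^2)^{1/2}(t^2+(1+y)^2)^{3/2}}, \qquad t\in\mathbb R,\ |y|<1,
\end{equation*}
which is precisely the identity invoked at the end of the proof of Lemma \ref{lempsilambda1}.

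With this formula in hand, I would bound the $L^2$-integral on the line $\operatorname{Im}z = y$ by first using $(t^2+(1-y)^2)^{1/2}\geq |1-y|$ to pull the first factor outside, and then evaluating the resulting elementary integral $\int_{\mathbb R}\frac{{\mathrm d}t}{(t^2+a^2)^{3/2}} = \frac{2}{a^2}$ with $a=1+y$. This gives
\begin{equation*}
\int_{\mathbb R}|\Psi'(t+\mathrm iy)|^2\,{\mathrm d}t \leq \frac{2}{|1-y|\,(1+y)^2}.
\end{equation*}

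For $|y|<c<1$ both $|1-y|$ and $|1+y|$ are bounded below by $1-c>0$, hence the right-hand side is uniformly bounded by $\frac{2}{(1-c)^3}$. Taking the supremum over $y\in(-c,c)$ yields \eqref{katzexp} for $\Psi'$. There is no serious obstacle here: the only point requiring care is to verify the explicit formula for $|\Psi'(t+\mathrm iy)|^2$ (which the author already asserts in the proof of Lemma \ref{lempsilambda1}), after which the remainder is an elementary uniform estimate.
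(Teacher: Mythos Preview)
Your proposal is correct and follows exactly the route the paper intends: the paper omits the proof of this lemma, remarking that it is proved ``exactly as'' Lemma~\ref{lempsilambda1}, whose proof already establishes \eqref{katzexp} for $\Psi'$ via the identical explicit formula for $|\Psi'(t+\mathrm{i}y)|^2$. You have simply made explicit the elementary uniform estimate that the paper leaves implicit after stating that formula.
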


\begin{lemma}\label{lempsi2}Let  $\Psi$ be defined by
\eqref{psipsi}. Set $\psi=\mathcal F \Psi'$. Let $\delta>0$. Then $\psi$ satisfies \eqref{intpsidelta}. 
\end{lemma}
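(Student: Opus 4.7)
The plan is to mimic the proof of Lemma \ref{lempsilambda2} essentially verbatim, since the function $\Psi$ itself enjoys the bound $|\Psi'(s)|=\frac{1}{1+s^2}$ on $\mathbb R$ that was used there only after passing through $\Psi_\lambda'$. The additional factor $1/\operatorname{dist}(\lambda,\mathbb C_+)^2$ that was needed for $\Psi_\lambda'$ is not needed here, so the estimate is in fact simpler.

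First I would verify that $\Psi'\in L^1(\mathbb R)$. From the formula
$$\Psi'(z)=\frac{\mathrm{i}}{\Psi(z)(z+\mathrm{i})^2}$$
and the observation that $\varpi(\mathbb R)\subset\mathbb T$ so that $|\Psi(s)|=1$ for $s\in\mathbb R$, one reads off $|\Psi'(s)|=\frac{1}{1+s^2}$, which is integrable. Consequently $\psi=\mathcal F\Psi'$ is given pointwise by the Fourier integral, and
$$\frac{\psi(t)-\psi(0)}{t}=\frac{1}{\sqrt{2\pi}}\int_{\mathbb R}\frac{\mathrm{e}^{-\mathrm{i}ts}-1}{t}\Psi'(s)\,\mathrm{d}s.$$

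Next I would bound $\int_{-\delta}^\delta|(\psi(t)-\psi(0))/t|\,\mathrm{d}t$ by moving absolute values inside, applying Fubini (legitimate because $|\mathrm{e}^{-\mathrm{i}ts}-1|/|t|\le \min(|s|,2/|t|)$ together with $|\Psi'(s)|=(1+s^2)^{-1}$ make the double integral finite), and computing as in Lemma \ref{lempsilambda2}. Specifically, $|\mathrm{e}^{-\mathrm{i}ts}-1|=2|\sin(ts/2)|$ and, after swapping order and substituting $u=ts/2$, one arrives at the same final integral
$$2\int_{\mathbb R}\frac{|\sin(s/2)|}{|s|}\arctan\frac{\delta}{|s|}\,\mathrm{d}s,$$
which is finite: near $s=0$ the factor $|\sin(s/2)|/|s|$ is bounded and $\arctan\le \pi/2$, while for $|s|\to\infty$ we have $\arctan(\delta/|s|)\sim \delta/|s|$, giving integrable decay of order $|s|^{-2}$. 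The estimate of $\int_{-\delta}^0$ is handled by the symmetric argument.

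There is no serious obstacle; the author's remark that the proof proceeds ``exactly as in Lemma \ref{lempsilambda2}'' is accurate, and the only checkable ingredient is the explicit formula $|\Psi'(s)|=1/(1+s^2)$ on $\mathbb R$, which follows directly from the definition $\Psi=\varrho\circ\varpi$ and the unimodularity of $\varpi$ on $\mathbb R$.
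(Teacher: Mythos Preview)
Your proof is correct and is exactly the approach the paper intends: the paper omits the proof, stating only that it is ``proved exactly as Lemma \ref{lempsilambda2}'', and you have reproduced that argument with the simplification that no $1/\operatorname{dist}(\lambda,\mathbb C_+)^2$ factor is needed. The key ingredient $|\Psi'(s)|=1/(1+s^2)$ on $\mathbb R$ is verified just as you indicate.
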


Recall that $\widehat\gamma_T$  is defined in \eqref{gamma}. 

\begin{theorem}\label{thmmain} Let $T$ be defined as in Proposition \ref{propttnu} with $\nu$ as in Lemma \ref{lemcircle}. 
 Let $\varrho$ be the branch of square root defined in $\mathbb C\setminus [0,+\infty)$. 
Then there exists $R\in\{T\}'$ such that $\widehat\gamma_T(R)=\varrho|_{\mathbb T}$, 
 and $\sigma(R)\subset\mathbb C_+\cup\mathbb R$. 
 \end{theorem}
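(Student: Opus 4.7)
The plan is to construct $R$ by going through the functional commutant, using Theorem \ref{thmconv} to reduce the existence of $R$ with $\widehat\gamma_T(R)=\varrho|_{\mathbb T}$ to the boundedness of a particular convolution operator on the weighted space $L^2(\mathbb R,\widetilde\phi_\alpha)$, and then handle the spectrum by the same reduction applied to the functions $(\varrho|_{\mathbb T}-\lambda)^{-1}$ for $\lambda\in\mathbb C_-$. Note that $\widehat\gamma_T$ is injective, because $T$ is of class $C_{10}$ with unitary asymptote $U_{\mathbb T}$ by Proposition \ref{propttmain}, so both $R$ and (later) the resolvent are determined uniquely once the corresponding multipliers on $\mathbb T$ have been shown to lie in $\widehat\gamma_T(\{T\}')$.

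For the existence of $R$, the function $\Psi=\varrho\circ\varpi$ from \eqref{psipsi} is the pull-back of $\varrho|_{\mathbb T}$ under $\varpi$, and $\Psi\in L^\infty(\mathbb R)\cap C^1(\mathbb R)$ with $|\Psi'(t)|=1/(1+t^2)$. By Theorem \ref{thmconv} it suffices to prove $\mathcal C_{\mathcal F\Psi}\in\mathcal L(L^2(\mathbb R,\widetilde\phi_\alpha))$, equivalently (by \eqref{wphi}) $\mathcal C_{\mathcal F\Psi}\in\mathcal L(L^2(\mathbb R,w_\alpha))$, for some $\alpha>0$. I would apply Theorem \ref{thmconv1}: by Lemma \ref{lempsi1} the function $\Psi'$ satisfies \eqref{katzexp} for every $0<c<1$, so Theorem \ref{thmkatzexp} (Paley--Wiener) gives exponential decay of $\psi=\mathcal F\Psi'$, in particular $\psi\in L^1(\mathbb R)$; and Lemma \ref{lempsi2} gives \eqref{intpsidelta}. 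Hence
\[
\mathcal C_{\mathcal F\Psi}=\tfrac{1}{\mathrm i}\mathcal A_\psi+\sqrt{2\pi}\,\Psi(0)I
\]
on $\mathcal D(\mathbb R)$, and it is enough to show $\mathcal A_\psi\in\mathcal L(L^2(\mathbb R,w_\alpha))$. Using the decomposition \eqref{aasum} from the proof of Theorem \ref{thmconv1},
\[
\mathcal A_\psi=\mathcal A_{1\psi}+\mathcal C_{\psi_2}-c_\psi I,
\]
I would handle each summand separately. For $\mathcal A_{1\psi}$: Lemma \ref{lemconv1} provides the $\delta$-support-propagation property and boundedness on $L^2(\mathbb R)$, so Lemma \ref{lemsupport} applies once one checks $\omega_\alpha(n)/\omega_\alpha(n+1)\asymp 1$, which is a direct computation from \eqref{omegan} (the ratio is $\leq\mathrm{e}^{\alpha v_1}$ and $\geq 1$). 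For $\mathcal C_{\psi_2}$: since $\psi_2$ inherits the exponential decay of $\psi$ (with the factor $1/t$ only improving it for $|t|>\delta$), and by Lemma \ref{lemexp} $\sqrt{w_\alpha}$ grows at most like $\mathrm{e}^{\varepsilon|t|}$ for arbitrarily small $\varepsilon$, we get $\psi_2\sqrt{w_\alpha}\in L^1(\mathbb R)$; together with the submultiplicativity from Corollary \ref{corsubmult}, Lemma \ref{lemconvsubmult} gives boundedness. Therefore $\varrho|_{\mathbb T}\in\widehat\gamma_T(\{T\}')$, and we set $R=\widehat\gamma_T^{-1}(\varrho|_{\mathbb T})\in\{T\}'$.

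For the spectrum, fix $\lambda\in\mathbb C_-$. Since $\Psi(\mathbb R)$ sits in the closed upper half of $\mathbb T$, one has $|\Psi-\lambda|\geq|\operatorname{Im}\lambda|>0$, so $\Psi_\lambda=1/(\Psi-\lambda)$ of \eqref{psilambda} is bounded on $\mathbb R$; it is the pull-back of $(\varrho|_{\mathbb T}-\lambda)^{-1}\in L^\infty(\mathbb T)$. I would repeat the argument of the previous paragraph verbatim with $\Psi_\lambda$ and $\psi_\lambda=\mathcal F\Psi_\lambda'$ in place of $\Psi$ and $\psi$, using Lemmas \ref{lempsilambda1} and \ref{lempsilambda2} in place of Lemmas \ref{lempsi1} and \ref{lempsi2}; these supply exactly the hypotheses needed for Paley--Wiener and for Theorem \ref{thmconv1}. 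The result is $(\varrho|_{\mathbb T}-\lambda)^{-1}\in\widehat\gamma_T(\{T\}')$. Picking $S\in\{T\}'$ with $\widehat\gamma_T(S)=(\varrho|_{\mathbb T}-\lambda)^{-1}$ and using that $\widehat\gamma_T$ is a unital algebra homomorphism together with its injectivity, $S(R-\lambda I)=(R-\lambda I)S=I$, so $R-\lambda I$ is invertible and $\lambda\notin\sigma(R)$. Hence $\sigma(R)\subset\mathbb C_+\cup\mathbb R$.

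The main obstacle is the estimate in the second paragraph: the weight $w_\alpha$ can grow (sub-exponentially) toward $-\infty$, so multiplication by $\Psi$ on the Fourier side is not manifestly a bounded operation on $L^2(\mathbb R,w_\alpha)$, and one must exploit the finer structure of $\Psi$---namely that $\Psi'$ extends analytically into a horizontal strip with the bound \eqref{katzexp}, so that $\psi=\mathcal F\Psi'$ decays exponentially---to split $\mathcal A_\psi$ into a local piece with short-range support propagation (treated by Lemma \ref{lemsupport}) and a convolution piece with a fast-decaying kernel (treated by Lemmas \ref{lemconvsubmult} and \ref{lemexp}). The same estimate for $\Psi_\lambda$ is what forces the spectrum of $R$ into the closed upper half-plane.
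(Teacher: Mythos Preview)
Your proposal is correct and follows essentially the same approach as the paper: reduce via Theorem \ref{thmconv} and \eqref{wphi} to boundedness of $\mathcal C_{\mathcal F\Psi}$ and $\mathcal C_{\mathcal F\Psi_\lambda}$ on $L^2(\mathbb R,w_\alpha)$, invoke Theorem \ref{thmconv1} (with hypotheses supplied by Paley--Wiener and Lemmas \ref{lempsi1}, \ref{lempsi2}, \ref{lempsilambda1}, \ref{lempsilambda2}), split $\mathcal A_\psi$ via \eqref{aasum}, and handle $\mathcal A_{1\psi}$ by Lemma \ref{lemsupport} and $\mathcal C_{\psi_2}$ by Lemma \ref{lemconvsubmult}. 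The only cosmetic differences are that the paper cites Lemma \ref{lemsubmult} for the ratio hypothesis of Lemma \ref{lemsupport} (where you compute it directly), and that you should state explicitly that $\delta$ is chosen with $0<\delta<\alpha$ so that Lemma \ref{lemsupport} applies.
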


\begin{proof} Since  $\widehat\gamma_T$ is a unital algebra-homomorphism, 
it sufficient to  prove that $\varrho|_{\mathbb T}\in\widehat\gamma_T(\{T\}')$
and $\frac{1}{\varrho-\lambda}\Big|_{\mathbb T}\in\widehat\gamma_T(\{T\}')$ 
for every $\lambda\not\in\mathbb C_+\cup\mathbb R$ (see \cite{ks14}).
% (where  $\widehat\gamma_T$ is defined in \eqref{gamma})
%$\varrho|_{\mathbb T}$ belongs to the functional commutant of $T$, 
%and for every $\lambda\not\in\mathbb C_+\cup\mathbb R$ the function $\frac{1}{\varrho-\lambda}\Big|_{\mathbb T}$ belongs to the functional commutant of $T$.  

Let $\Psi$ be defined by \eqref{psipsi}, and for $\lambda\not\in\mathbb C_+\cup\mathbb R$ let $\Psi_\lambda$
be defined by \eqref{psilambda}. Take $\alpha>0$. Let $\widetilde\phi_\alpha$ be defined in Theorem \ref{thmconv}. 
By  Theorem \ref{thmconv}, it is sufficient to prove that $\mathcal C_{\mathcal F\Psi}\in\mathcal L(L^2(\mathbb R,\widetilde\phi_\alpha))$, 
and  $\mathcal C_{\mathcal F\Psi_\lambda}\in\mathcal L(L^2(\mathbb R,\widetilde\phi_\alpha))$ for every $\lambda\not\in\mathbb C_+\cup\mathbb R$.  
We may assume that $\sum_{k=1}^\infty a_k\leq 1$ (where $\{a_k\}_{k=1}^\infty$ are from the construction of $\nu$).
  Let $w_\alpha$ be defined by \eqref{omegarr}. By \eqref{wphi}, 
$w_\alpha\asymp \widetilde\phi_\alpha$. Therefore, it sufficient to prove that 
$\mathcal C_{\mathcal F\Psi}\in\mathcal L(L^2(\mathbb R,w_\alpha))$, 
and $\mathcal C_{\mathcal F\Psi_\lambda}\in\mathcal L(L^2(\mathbb R,w_\alpha))$
  for every $\lambda\not\in\mathbb C_+\cup\mathbb R$.  
Since $\sup_{(b_1,b_2)}w_\alpha<\infty$ for every $-\infty<b_1<b_2<+\infty$, we have 
$\mathcal D(\mathbb R)\subset L^2(\mathbb R,w_\alpha)$. 

Take $0<\delta<\alpha$.  Set $\psi=\mathcal F\Psi'$. By Theorem \ref{thmkatzexp} and Lemmas \ref{lempsi1} and \ref{lempsi2}, 
$\Psi$ satisfies Theorem \ref{thmconv1}. By Theorem \ref{thmconv1}, it sufficient to prove that 
the mapping $\mathcal A_\psi$ which is defined on $\mathcal D(\mathbb R)$ can be extended as a (linear, bounded) operator 
onto $L^2(\mathbb R,w_\alpha)$. 
  
By \eqref{aasum}, it sufficient to prove that $\mathcal A_{1\psi}$ and 
$\mathcal C_{\psi_2}$, where $\psi_2$ is defined by \eqref{psi2}, 
can be extended as  (linear, bounded) operators 
onto $L^2(\mathbb R,w_\alpha)$.  By Lemma \ref{lemsubmult}, $w_\alpha$ satisfies the assumptions 
of Lemma \ref{lemsupport}. By Lemma \ref{lemconv1},  
$\mathcal A_{1\psi}$ satisfies the assumptions of Lemma \ref{lemsupport}. 
Thus, $\mathcal A_{1\psi}\in\mathcal L(L^2(\mathbb R,w_\alpha))$ by Lemma \ref{lemsupport}.
 By Theorem \ref{thmkatzexp} and Lemma \ref{lemexp}, $\psi_2 \sqrt{w_\alpha}\in L^1(\mathbb R)$.
By Corollary \ref{corsubmult}, $w_\alpha$ satisfies the assumptions 
of Lemma \ref{lemconvsubmult}.  By Lemma \ref{lemconvsubmult}, 
$\mathcal C_{\psi_2}\in\mathcal L(L^2(\mathbb R,w_\alpha))$.  

Thus, $\mathcal C_{\mathcal F\Psi}\in\mathcal L(L^2(\mathbb R,\widetilde\phi_\alpha))$. 
For $\Psi_\lambda$ with $\lambda\not\in\mathbb C_+\cup\mathbb R$, the proof is the same. \end{proof}

\begin{corollary}\label{cormain} There exists a quasianalytic contraction $R$ 
with  $\sigma(R)=\{\mathrm{e}^{\mathrm{i}t}\ : \ t\in [0,\pi]\}$ and with a unitary asymptote $U_{\sigma(R)}$. 
Consequently, $\sigma(R)$ coincides with the quasianalytic spectral set of $R$ and $\sigma(R)\neq \mathbb T$. 
\end{corollary}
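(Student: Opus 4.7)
The plan is to assemble the machinery of Sections 2--8 with an appropriate choice of parameters. First, fix $\nu$ from Lemma \ref{lemcircle} using the sequences $\{a_n\}$ and $\{v_n\}$ of the Example following Lemma \ref{lemquasi}; combining the Example with Lemma \ref{lemquasi} shows that \eqref{quasi} diverges for every $\alpha>0$, so Theorem \ref{thmquasi} implies that the operator $T$ built from $\nu$ via Proposition \ref{propttnu} is quasianalytic. By Propositions \ref{propttmain} and \ref{propttnu}, this $T$ is similar to a contraction, is of class $C_{10}$, has unitary asymptote $(I_{H^2(\mathbb D)}\oplus X_0,U_{\mathbb T})$, and satisfies $\sigma(T)=\mathbb T$.

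Next I would invoke Theorem \ref{thmmain} to obtain $R\in\{T\}'$ with $\widehat\gamma_T(R)=\varrho|_{\mathbb T}$ and $\sigma(R)\subset\mathbb C_+\cup\mathbb R$, where $\varrho$ is the branch of $\sqrt{\cdot}$ on $\mathbb C\setminus[0,+\infty)$. Since $T$ is of class $C_{1\cdot}$ and its unitary asymptote is $U_{\mathbb T}$, the homomorphism $\widehat\gamma_T$ is injective; from $\widehat\gamma_T(R^2)=(\varrho|_{\mathbb T})^2=\chi=\widehat\gamma_T(T)$ one concludes $R^2=T$. Theorem \ref{thmsquare} then shows that $R$ is similar to a contraction, and after passing to the similar contraction we obtain an operator---still denoted $R$---which is an honest contraction with unchanged spectrum, unitary asymptote, and quasianalyticity.

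The relation $\widehat\gamma_T(R)=\varrho|_{\mathbb T}$ unpacks as $XR=\varrho(U_{\mathbb T})X$, so the hypotheses of Lemma \ref{lemsquare} are fulfilled, giving that $(X,\varrho(U_{\mathbb T}))$ is a unitary asymptote of $R$. As observed in the proof of Corollary \ref{corsquare}, $\varrho(U_{\mathbb T})$ is unitarily equivalent to $U_\sigma$ with $\sigma=\{\mathrm{e}^{\mathrm{i}t}:t\in(0,\pi)\}$, so (up to a null set) this asymptote is exactly $U_{\sigma(R)}$ once the spectrum has been identified. Corollary \ref{corsquare} transfers the quasianalyticity from $T$ to $R$, and therefore the quasianalytic spectral set of $R$---being the support of the spectral measure of its unitary asymptote---equals $\{\mathrm{e}^{\mathrm{i}t}:t\in[0,\pi]\}$.

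Finally, I would pin down $\sigma(R)$ by the spectral mapping theorem applied to $R^2=T$: every $\lambda\in\sigma(R)$ satisfies $\lambda^2\in\sigma(T)=\mathbb T$, hence $|\lambda|=1$, and combined with $\sigma(R)\subset\mathbb C_+\cup\mathbb R$ from Theorem \ref{thmmain} this forces $\sigma(R)\subset\{\mathrm{e}^{\mathrm{i}t}:t\in[0,\pi]\}$; conversely, $\{\lambda^2:\lambda\in\sigma(R)\}=\mathbb T$ forces the reverse inclusion. Hence $\sigma(R)=\{\mathrm{e}^{\mathrm{i}t}:t\in[0,\pi]\}$, which is a proper subarc of $\mathbb T$ and coincides with the quasianalytic spectral set. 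No step here is genuinely hard in its own right: the analytic heavy lifting is already in Theorems \ref{thmquasi} and \ref{thmmain}, and the only care needed is in coordinating the identifications of unitary asymptotes (as equivalence classes) with the concrete model $U_{\sigma(R)}$.
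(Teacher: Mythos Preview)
Your proof is correct and follows essentially the same route as the paper's: choose $\nu$ so that $T$ is quasianalytic, invoke Theorem~\ref{thmmain} for $R$, deduce $R^2=T$ from injectivity of $\widehat\gamma_T$, transfer similarity-to-a-contraction and quasianalyticity via Theorem~\ref{thmsquare} and Corollary~\ref{corsquare}, and identify the unitary asymptote via Lemma~\ref{lemsquare}. The only cosmetic difference is in the reverse inclusion $\sigma\subset\sigma(R)$: the paper obtains it from \eqref{sigmagamma} applied to $R$ (since $\sigma(U_\sigma)=\sigma$), whereas you get it from surjectivity of $\lambda\mapsto\lambda^2$ on $\sigma$; both are one-line arguments.
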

\begin{proof} Let $T$ and $R$ be from  Theorem \ref{thmmain}. Clearly,  $\bigl(\varrho|_{\mathbb T}\bigr)^2=\chi$ (where $\chi(z)=z$, $z\in\mathbb T$).
Since $\widehat\gamma_T(R)=\varrho|_{\mathbb T}$ and $\widehat\gamma_T(T)=\chi$, we conclude that $R^2=T$, 
because $\widehat\gamma_T$ is a unital algebra-homomorphism. 

Set $\sigma=\{\mathrm{e}^{\mathrm{i}t}\ : \ t\in [0,\pi]\}$. By Theorem \ref{thmmain}, 
$\sigma(R)\subset\mathbb C_+\cup\mathbb R$. By Proposition \ref{propttnu}, $\sigma(T)=\mathbb T$. 
Since $R^2=T$, we have $\sigma(T)=\{\lambda^2\ :\ \lambda\in\sigma(R)\}$. Therefore, $\sigma(R)\subset\mathbb T$. 
Consequently, $$\sigma(R)\subset\mathbb T\cap( \mathbb C_+\cup\mathbb R)= \sigma.$$ 

By Proposition \ref{propttmain}, $T$ is similar to a contraction. By Theorem \ref{thmsquare}, $R$ is similar to a contraction, too.
By Lemma \ref{lemsquare}, $\varrho(U_{\mathbb T})$ is a unitary asymptote of $R$. Since $\varrho(U_{\mathbb T})$ is unitarily equivalent to 
$U_\sigma$, we have that $U_\sigma$ is a unitary asymptote of $R$. By \eqref{sigmagamma} applied to $R$,  $\sigma\subset\sigma(R)$. Thus, $\sigma(R)=\sigma$. 

Let $\nu$ be chosen such that $T$ is quasianalytic. 
(It is possible by results of Sec. 6.) By Corollary \ref{corsquare}, $R$ is quasianalytic.
\end{proof}

\begin{remark} Let $T$ and $R$ be operators constructed  in the proof of Corollary \ref{cormain}. 
By Proposition \ref{propttmain} and Lemma \ref{lemsquare},  unitary asymptotes of $T$ and  $R$  are cyclic unitary operators. 
Therefore, $\{T\}'$ and  $\{R\}'$ are abelian algebras. Since $R\in\{T\}'$, we conclude that $\{R\}'=\{T\}'$ 
by {\cite[Proposition 11]{ks14}}. Consequently, $\operatorname{Hlat}R=\operatorname{Hlat}T$. By Corollary \ref{cor72}, 
$\operatorname{Hlat}T$ is nontrivial. Therefore, $\operatorname{Hlat}R$ is nontrivial, too. 
%(that is, it contains subspaces which are not equals to $\{0\}$ and the whole space in which $T$ acts)
 \end{remark}

\begin{remark} Let $T$ be defined as in Proposition \ref{propttnu} with $\nu$ as in Lemma \ref{lemcircle}. For $\alpha>0$, 
let $\vartheta_\alpha$ be defined in Theorem \ref{thmsimshift}. By Remark \ref{remark73}, 
 $\vartheta_\alpha(T)$ is invertible.

For every $0<r<1$ set $\mathcal G_r=\mathbb D\setminus D_r$, where $D_r$ is defined in \eqref{circle}. 
Then $\inf_{\mathcal G_r}|\vartheta_\alpha|>0$. 
Denote by $\kappa_r$ a conformal mapping of  $\mathbb D $ onto $\mathcal G_r$. 
Let $Q$ be an operator  which admits an $H^\infty$-functional calculus (see {\cite[Theorem 23]{ker4}}). 
 By \cite{ker15} and \cite{gam19}, $\vartheta_\alpha(\kappa_r(Q))$ is invertible. 

A question appears: whether exists an operator $Q_r$ such that $T=\kappa_r(Q_r)$? 
It is possible to prove that there exists $Q_{1r}\in\{T\}'$ such that 
$\widehat\gamma_T(Q_{1r})=\kappa_r^{-1}$ and $\sigma(Q_{1r})$ is a proper subarc of $\mathbb T$. 
 But the estimate obtained by the author is $\|Q_{1r}^n\|\leq Cn(\log n)^2$ for sufficiently large $n\in\mathbb N$, 
which does  not allow to define $\kappa_r(Q_{1r})$.
\end{remark}

\end{document}